\documentclass[11pt, twoside]{article}
\usepackage{amsfonts}
\usepackage{amsmath}
\usepackage{amsthm}
\usepackage[mathscr]{euscript}
\usepackage{mathrsfs}
\usepackage{graphicx}
\usepackage{subfigure}
\usepackage{varwidth}
\usepackage{color}\usepackage{enumitem}
\usepackage{url}
\usepackage{authblk}

\setlength{\textwidth}{6.5truein} \setlength{\textheight}{9.5truein}
\setlength{\oddsidemargin}{-0.0in}
\setlength{\evensidemargin}{-0.0in}
\setlength{\topmargin}{-0.6truein}

\newtheorem{prop}{\textbf{Proposition}}[section]
\newtheorem{definition}{\textbf{Definition}}[section]
\newtheorem{theo}{\textbf{Theorem}}[section]
\newtheorem{lem}{\textbf{Lemma}}[section]

\newtheorem{remark}{\textbf{Remark}}[section]

\newcommand\ee{\end{equation}}
\newcommand\bes{\begin{eqnarray}}
\newcommand\ees{\end{eqnarray}}
\newcommand\bess{\begin{eqnarray*}}
\newcommand\eess{\end{eqnarray*}}

\newcommand\nm{{\nonumber}}

\numberwithin{equation}{section}

\begin{document}
\setlength{\baselineskip}{16pt}

\title{\bf $L^{p}$-$L^{q}$ estimates of the heat kernels on graphs with applications to a parabolic system \footnote{The author was supported by NSFC Grant 12201184.}}

\author[b]{Yuanyang Hu\footnote{Corresponding author. {\sl E-mail}: yuanyhu@henu.edu.cn}}

\affil[b]{\small School of Mathematics and Statistics, Henan University, Kaifeng 475004, China}\vspace{2mm}

\maketitle

\begin{quote}
	\noindent{\bf Abstract.} 
Let $G=(V, E)$ be a locally finite connected graph satisfying curvature-dimension conditions ($CDE(n, 0)$ or its strengthened version $CDE'(n, 0))$) and polynomial volume growth conditions of degree $m$. We systematically establish sharp $L^{p}$-bounds and decay-type $L^{p}$-$L^{q}$ estimates for heat operators on $G$, accommodating both bounded and unbounded Laplacians. The analysis utilizes Li-Yau-type Harnack inequalities and geometric completeness arguments to handle degenerate cases. As a key application, we prove the existence of global solutions to a semilinear parabolic system on $G$ under critical exponents governed by volume growth dimension $m$. 

	\noindent{\bf Keywords:} Heat kernel estimates; Curvature dimension conditions;  Semilinear parabolic systems; Global existence; Blow up;  
	
	\noindent {\bf AMS subject classifications (2020)}: 35A01, 35K91, 35R02, 58J35.
\end{quote}

\section{Introduction}
Over the years, $L^{p}$ boundness and $L^{p}$-$L^{q}$ boundness of the heat semigroup $(e^{-t\Delta})_{t>0}$ have  attracted widespread attention on miscellaneous space. 
Taylor \cite{Taylor} studied $L^{p}$ estimates of certain functions of the Laplace operator on a complete Riemannian manifold with $C^{\infty}$ bounded geometry,
while Cowling-Giulini-Meda \cite{Cowling-Giulini-Meda} established the $L^{p}$-$L^{q}$ estimates for functions of the Laplace-Beltrami operator on a connected noncompact semisimple Lie group. 
In the discrete setting,
Cowling-Meda-Setti \cite{Cowling-Meda-Setti} used techniques from harmonic analysis to find the $L^{p}$-$L^{q}$ estimates of the heat operator on a homogeneous tree $\mathfrak{X}$ of degree $N$ and discovered very close analogies with diffusions on hyperbolic spaces. 
Abundant authors have shown that there is a close relationship between the discrete heat kernel of the graph Laplace operator on the discretization and that of the Laplace-Beltrami operator on the manifold (see, e.g., M. Kanai \cite{KM} and N. Th. Varopoulos \cite{V}). 
Recently, the heat kernels on graphs have attracted a lot of attention as an analogues of diffusions on manifolds (see, e.g., I. Chavel-E. A. Feldman \cite{CF} and M. M. H. Pang \cite{MM}). 

In this paper, we investigate heat kernel estimates on locally finite connected graphs $G=(V,E)$ satisfying curvature dimension condition and polynomial volume growth condition. 
We systematically analyze heat kernel estimates on locally finite graphs under two curvature-dimension frameworks: the standard $CDE(n,0)$ and its strengthened variant $CDE'(n,0)$, where $CDE'(n,0)$ implies $CDE(n,0)$. For graphs satisfying $CDE(n,0)$ and $D_{\mu} < \infty$ (bounded Laplacians), utilizing the Li-Yau Harnack-type estimate established by Bauer-Horn-Lin-Lippner-Mangoubi-Yau in \cite{BHLLMY}, we establish $L^p$-$L^q$ bounds with decay rates $t^{-m/2(1/p-1/q)}$, governed by polynomial volume growth $\mathcal{V}(x,R) \geq C R^m$ (Theorems 2.1-2.2). 
Conversely, for complete graphs satisfying the strengthened $CDE'(n,0)$ condition, we turn to the Li-Yau-type inequality of Hua-Lin \cite{HL}, which dispenses with the $D_{\mu} < \infty$ requirement by leveraging geometric completeness. This methodological shift allows us to establish the same $L^p$-$L^q$ estimates of the heat semigroup for unbounded Laplacians (Theorem 2.3).

In the pioneering work of Fujita \cite{Fujita}, he studied the Cauchy problem
\begin{equation}\label{1.1}
	\begin{cases}
		u_t=\Delta_{\mathbb{R}^N}u+u^p~ &\text{in}~ \mathbb{R}^N \times(0,\infty), \\ u(0,x)=u_0(x)~ &\text{in}\;\;\mathbb{R}^N,
\end{cases}\end{equation}
where $\Delta_{\mathbb{R}^N}$ is the Laplace operator, $p>1$ and $u_0(x)$ is a nonnegative nontrivial bounded and continuous function on $\mathbb{R}^N$.
Define $p_{F}:=1+\frac{2}{N}$, which is called Fujita exponent. Fujita established the following results:
\begin{itemize}
\item If $1<p<p_{F}$, then any nonnegative solutions of \eqref{1.1} blows up in finite time;
\item If $p>p_{F}$, then \eqref{1.1} admits a global nonnegative solution for sufficiently small $u_0$.
\end{itemize}
Escobedo and Herrero \cite{EH} studied the weakly coupled system:
\begin{equation}\label{1.2}
\begin{cases}u_t=\Delta_{\mathbb{R}^N} u+v^p, & x\in \mathbb{R}^N,~ t>0, \\ v_t=\Delta_{\mathbb{R}^N} v+u^q, & x\in \mathbb{R}^N,~ t>0, \\ u(0, x)=u_0(x), & x\in \mathbb{R}^N, \\ v(0, x)=v_0(x), & x\in \mathbb{R}^N,\end{cases}
\end{equation}
where $N \ge 1, p, q \ge 1, pq>1$ and $u_0, v_0$ are bounded nonnegative function on $\mathbb{R}^N$.
They obtained the following results:
\begin{description}
	\item[i)] If $2\max\{p+1, q+1\} \ge N(pq-1)$, then no nonnegative global solution exists for any nontrivial initial data;
    
\item[ii)] If $2 \max \{p+1, q+1\}<N(pq-1)$, the solution of \eqref{1.2} blows up at finite time provided that there exist constants $C>0$ and $\alpha$ such that $u_0(x)\ge Ce^{-\alpha|x|^{2}}$;
	
\item[iii)] If $2\max\{p+1, q+1\}<N(pq-1)$, $u_0(x) \in L^{\infty}\left(\mathbb{R}^N\right) \cap L^{\alpha_1}\left(\mathbb{R}^N\right), v_0(x) \in L^{\infty}\left(\mathbb{R}^N\right) \cap$ $L^{\alpha_2}\left(\mathbb{R}^N\right)$ with $\alpha_1=(N / 2)((p q-1) /(q+1)),~\alpha_2=(N / 2)((p q-1) /(p+1))$. Then there exists $\epsilon>0$ such that if $\left\|u_0\right\|_{L^{\alpha_1}(\mathbb{R}^N)}+\left\|v_0\right\|_{L^{\alpha_2}(\mathbb{R}^N)} \le \epsilon$, every solution of \eqref{1.2} is global.
\end{description}

Recently, increasing efforts have been devoted to the existence and nonexistence of global solutions of the Cauchy problem on various space, such as manifolds (\cite{W1},\cite{ZQ}), graphs (\cite{HuWang},\cite{Len}), metric measure space(\cite{FHS}). In particular, Lin-Wu \cite{LWcv} studied the following Cauchy problem 
\begin{equation}\label{1.4}
	\left\{\begin{array}{lll}
		u_t=\Delta u+u^p &\text{ in }& V \times(0,+\infty), \\[1mm]
		u(x, 0)=\phi(x)\ge,\not\equiv0\;\; &\text{ on }& V,
	\end{array}\right.
\end{equation}
where $G=(V, E)$ is a locally finite connected graph satisfying the condition ${CDE}^{\prime}(n, 0)$ and polynomial volume growth of degree $m$, $\Delta$ is the usual graph Laplace operator on $G$ and $\phi$ is a bounded function on $V$. They discovered that if $1+\frac{2}{m}<p$, there admits a global solution to \eqref{1.4} provided that $\phi$ is sufficiently small, and if $1<p<\frac{2}{m}+1$, then any solution of \eqref{1.4} blows up in finite time.

In this paper, we study the following discrete weakly coupled system
\begin{equation}\label{1.4.}
\left\{\begin{array}{ll}
u_t=\Delta u+v^p, &t>0, \quad x\in V, \\
v_t=\Delta v+u^q,  &t>0, \quad x\in V, \\
u(0, x)=u_0(x), &x\in V, \\
v(0, x)=v_0(x), &x\in V,
\end{array}\right.
\end{equation}
where $pq>1$, $p,q>0$, $G=(V, E, \omega, \mu)$ is a locally finite connected graph satisfying $D_{\mu}<\infty$, $\Delta$ is the usual graph Laplacian on $G$ (cf. \eqref{2.1.}) and $u_0, v_0$ are bounded nonnegative functions on $V$. The problem \eqref{1.4.} is a spatial-discrete version of \eqref{1.2}. It also comes directly from mathematical biological models and chemical reactor theory (cf. \cite{Fife,SK, SS, Wang}), and the results for \eqref{1.4.} could be used to the study of these equations as shown by Aronson-Weinberger \cite{AW}.

Recently, equations and analysis on graphs have attracted considerable attention, see \cite{BHua,BS,CKKLLP,GX,HJL,HuangKW,HuangKM,HLLY,HuWang,KL,Len,LZ,Sun, RW} and references therein. Bauer-Horn-Lin-Lippner-Mangoubi-Yau \cite{BHLLMY} established the Gaussian upper bound for a graph satisfying ${CDE}(n, 0)$. Later, Horn-Lin-Liu-Yau \cite{HLLY} obtained the Gaussian lower bound for a graph satisfying ${CDE}^{\prime}(n, 0)$.

Based on these heat kernel estimates, Wu \cite{Wu} proved that
\begin{theo}\label{t2.3.}
Let $G=(V,E,\omega,\mu)$ be a graph. For any $m\in\mathbb{R}^{+}$. Suppose $p,q\ge 1,$~$pq>1$, $\mu_{\min}>0,~ \mu_{\max}<\infty,~ \omega_{\min}>0~\text{ and } ~D_{\mu}<\infty.$  
Assume $G$ satisfies the curvature dimension condition $CDE'(n,0)$. 
\begin{description}
    \item[(i)] 
    If $\frac{\max\{p,q\}+1}{pq-1}\ge\frac{m}{2}$ and $G$ satisfies the volume growth condition:
    There exist positive constants $m$, $c_{1}$ and $c_{2}$ such that $c_1r^{m}\le\mathcal{V}(x,r)\le {c}_{2}r^{m}$ for all $x\in V$ and $r>0$. Then all nontrivial nonnegative solution of \eqref{1.4.} blows up in finite time.
    \item[(ii)]
    Fix $x_0\in V$. Suppose $G$ satisfies the volume growth condition $\mathcal{V}(x_0,R)\le CR^{m}$, $\forall R>0$ for some constant $C>0$. Assume that there exist $t_2>0$ and sufficiently large $C_{11}>0$ such that 
    $u_{0}(x)\ge C_{11}P(t_2,x_0,x)\text{ and }v_0(x)\ge C_{11}P(t_2,x_0,x)~\text{ for all }~x\in V.$
    Then all nonnegative solutions of \eqref{1.4.} will blow up at finite time $T$.
\end{description}    
\end{theo}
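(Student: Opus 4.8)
The plan is to prove the blow-up statement of Theorem~\ref{t2.3.} by adapting the Escobedo--Herrero test-function / integral-comparison method to the graph setting, using the Gaussian heat kernel bounds available under $CDE'(n,0)$ together with the polynomial volume growth. Throughout, write $P(t,x,y)$ for the heat kernel of $e^{-t\Delta}$ on $(V,E,\omega,\mu)$ and recall that $CDE'(n,0)$ gives a Li--Yau inequality and hence (by \cite{HLLY,BHLLMY}) two-sided Gaussian bounds
\begin{equation*}
\frac{c}{\mathcal{V}(x,\sqrt t)}\exp\!\left(-C\frac{d(x,y)^2}{t}\right)\le P(t,x,y)\le \frac{C}{\mathcal{V}(x,\sqrt t)}\exp\!\left(-c\frac{d(x,y)^2}{t}\right),
\end{equation*}
which combined with $c_1 r^m\le \mathcal V(x,r)\le c_2 r^m$ yields $P(t,x,y)\asymp t^{-m/2}\exp(-c\,d(x,y)^2/t)$ in part~(i), and the on-diagonal form $P(t,x_0,x)\gtrsim t^{-m/2}\exp(-C d(x_0,x)^2/t)$ in part~(ii). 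The bounded-geometry hypotheses ($\mu_{\min}>0$, $\mu_{\max}<\infty$, $\omega_{\min}>0$, $D_\mu<\infty$) guarantee that mild solutions of \eqref{1.4.} are classical, nonnegative, and satisfy the Duhamel formulas $u(t)=e^{-t\Delta}u_0+\int_0^t e^{-(t-s)\Delta}v(s)^p\,\rd s$ and symmetrically for $v$, so that for any nonnegative weight $\psi$ the mass functionals $U(t)=\sum_{x\in V}u(t,x)\psi(x)\mu(x)$, $W(t)=\sum_{x\in V}v(t,x)\psi(x)\mu(x)$ are finite and differentiable.

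For part~(i) I would choose the standard moving Gaussian weight $\psi_t(x)=P(t+\tau, x_0, x)$ (with $\tau$ a fixed reference time), so that $\partial_t\psi_t = -\Delta\psi_t$ and summation by parts kills the Laplacian terms:
\begin{equation*}
\frac{\rd}{\rd t}\sum_x u\,\psi_t\,\mu = \sum_x v^p\,\psi_t\,\mu,\qquad
\frac{\rd}{\rd t}\sum_x v\,\psi_t\,\mu = \sum_x u^q\,\psi_t\,\mu.
\end{equation*}
Then Jensen's inequality against the probability measure $\psi_t(x)\mu(x)\big/\!\sum_y\psi_t(y)\mu(y)$ converts $\sum_x v^p\psi_t\mu$ into a lower bound of the form $c\,(t+\tau)^{-m(p-1)/2}\,W(t)^p$ (the normalizing mass $\sum_y P(t+\tau,x_0,y)\mu(y)$ being $\asymp 1$ by stochastic completeness / the two-sided bound), and likewise for the $u^q$ term. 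This yields the differential system $U'(t)\ge c(t+\tau)^{-m(p-1)/2}W(t)^p$, $W'(t)\ge c(t+\tau)^{-m(q-1)/2}U(t)^q$, with $U,W$ bounded below by positive constants (since $u_0,v_0\not\equiv0$ and $\psi_t>0$ everywhere by the Gaussian lower bound). A now-classical ODE argument — multiply, integrate, and track the power of $t$ — shows that the coupled inequalities force blow-up in finite time exactly when $\frac{\max\{p,q\}+1}{pq-1}\ge\frac m2$: the borderline case $=\frac m2$ is handled by a logarithmic refinement of the same ODE comparison. I expect this ODE bookkeeping in the critical case to be the main technical obstacle, since the estimates must be pushed to capture the marginal divergence rather than just a power-law one.

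For part~(ii) the hypothesis only gives one-sided volume control $\mathcal V(x_0,R)\le CR^m$, so the argument is reorganized around the pointwise lower bound $u_0(x),v_0(x)\ge C_{11}P(t_2,x_0,x)$: by the semigroup/Duhamel inequality, $u(t,x)\ge C_{11}P(t_2+t,x_0,x)$ and similarly for $v$, and then feeding these lower bounds back through Duhamel iteratively, $u(t,x)\ge C_{11}P(t_2+t,x_0,x)+\int_0^t\!\sum_y P(t-s,x,y)\big(C_{11}P(t_2+s,x_0,y)\big)^p\mu(y)\,\rd s$. Using the semigroup identity $\sum_y P(t-s,x,y)P(t_2+s,x_0,y)\mu(y)=P(t_2+t,x_0,x)$ together with the on-diagonal lower bound $P(t_2+s,x_0,y)\gtrsim (t_2+s)^{-m/2}$ on the ball $d(x_0,y)\le\sqrt{t_2+s}$ (whose mass is $\gtrsim$ a constant), one derives a pointwise Duhamel iteration scheme whose divergence, evaluated at $x=x_0$, is again governed by the same exponent condition; choosing $C_{11}$ large makes the first iterate already exceed any prescribed threshold, so the iteration runs away in finite time and blow-up follows. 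The delicate point here is that $\mathcal V(x_0,\cdot)$ is bounded above but possibly not below, so the lower bounds on $P$ must be extracted from the heat-kernel lower bound at the specific pole $x_0$ and one cannot freely replace volumes by $t^{m/2}$; this forces the estimates to be done with $\mathcal V(x_0,\sqrt t)$ kept explicit until the final step, which I regard as the principal subtlety of part~(ii).
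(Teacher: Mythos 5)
You should first note that this paper does not prove the statement at all: Theorem 1.1 is quoted verbatim from Wu \cite{Wu} as background, so your proposal can only be measured against the standard method behind that result. Judged on its own terms, part (i) has a genuine gap at its core computation. The moving weight $\psi_t(x)=P(t+\tau,x_0,x)$ satisfies the \emph{forward} equation $\partial_t\psi_t=\Delta\psi_t$ (Proposition 2.1(i)), not $\partial_t\psi_t=-\Delta\psi_t$, so after summation by parts the Laplacian terms do not cancel; you are left with an extra term $2\sum_x u\,\Delta\psi_t\,\mu$ of no definite sign. The clean identities you write hold only for the backward weight $P(T_0-t,x_0,\cdot)$ on a finite window $[0,T_0)$. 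This is not a harmless sign slip: combined with your further claim that $U(t)$ and $W(t)$ stay bounded below by positive constants, your scheme would give $U'\ge c\,W^p$ and $W'\ge c\,U^q$ for all time and hence finite-time blow-up for every nontrivial datum whenever $pq>1$, contradicting the global existence Theorem 2.7 of this very paper; so at least one of these steps must fail, and in fact both do (the weighted masses generically decay like $t^{-m/2}$, already for the free semigroup $e^{t\Delta}u_0$, so positivity of $\psi_t$ gives no uniform lower bound). The correct bookkeeping is: backward kernel weight on $[0,T_0)$, Jensen with unit mass (stochastic completeness, which does hold since $D_\mu<\infty$), a lower bound $U(0)\ge c\,T_0^{-m/2}$ coming from the kernel lower bound together with $\mathcal{V}(x,r)\le c_2 r^m$, and then a comparison of the ODE blow-up time with $T_0$ — that is precisely where $\frac{\max\{p,q\}+1}{pq-1}\ge\frac{m}{2}$ (with a logarithmic refinement at equality) enters. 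One more caution: discrete Gaussian \emph{lower} bounds hold only in restricted space-time regimes (the kernel decays super-Gaussianly when $d(x,y)$ is large compared with $t$), so the two-sided bound cannot be used as freely as on $\mathbb{R}^N$; the Harnack inequality of Propositions 2.6--2.7 is the safer tool.

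For part (ii), your starting reduction $u(t,x)\ge C_{11}P(t+t_2,x_0,x)$ and the use of the semigroup identity are fine, but the concluding step, ``choosing $C_{11}$ large makes the first iterate already exceed any prescribed threshold, so the iteration runs away in finite time,'' is not an argument: a large first iterate does not by itself produce finite-time blow-up, and you never set up the recursion or the differential/integral comparison that would. Moreover, the pointwise bound $P(t_2+s,x_0,y)\ge c\,(t_2+s)^{-m/2}$ on the ball $B(x_0,\sqrt{t_2+s})$, under only the one-point upper volume bound, needs a proof of mass concentration (for instance Cauchy--Schwarz, $P(2s,x_0,x_0)\ge\bigl(\sum_{y\in B(x_0,r)}P(s,x_0,y)\mu(y)\bigr)^2/\mathcal{V}(x_0,r)$, together with a tail estimate), which you assert but do not supply. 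A cleaner route is to run the same backward-weight functional on a fixed window $[0,T_0)$: by the semigroup law, $U(0)\ge C_{11}\sum_y P(t_2,x_0,y)P(T_0,x_0,y)\mu(y)=C_{11}P(T_0+t_2,x_0,x_0)>0$ by positivity of the kernel, and then taking $C_{11}$ large forces the ODE system $U'\ge W^p$, $W'\ge U^q$ to blow up before time $T_0$, which is the contradiction you need.
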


However, \cite{Wu} leaves two unresolved issues:
\begin{itemize}
\item  Existence of Solutions: The analysis only addresses blowup scenarios but does not establish the existence of solutions even locally for short time intervals  $t\in [0, T]$.
\item Subcritical Case: The behavior of solutions when the critical exponent satisfies  $\frac{\max \{p, q\}+1}{p q-1}<\frac{m}{2}$  remains entirely unaddressed.
\end{itemize}
These questions are pivotal for a complete characterization of the blowup vs. global existence threshold for the semilinear parabolic system \eqref{1.4.}, yet they have not been tackled in the discrete geometric setting.

Resolving these unaddressed issues is essential for a rigorous theory of \eqref{1.4.}:
\begin{itemize}
\item Existence Theory: Existence is a prerequisite for any parabolic problem; without it, the blowup analysis lacks a foundational basis.
\item Subcritical Analysis: The case  $\frac{\max \{p, q\}+1}{p q-1}<\frac{m}{2}$  requires delicate balance between nonlinearity and diffusion, necessitating novel techniques that link heat kernel decay with geometric properties of the graph.
\end{itemize}

In this paper, we address these challenges via the  $L^p\text{-}L^q$  heat kernel estimates developed in Theorem 2.2:
\begin{itemize}
\item Local Existence: Using a contraction mapping principle, we first prove the existence of nonnegative solutions  $(u, v)$  to \eqref{1.4.} on a short time interval  $t \in [0, T]$  (Theorem 2.4), filling the existence gap.

\item Global Existence for Small Initial Data: Employing the heat kernel’s temporal decay rate $t^{-m / 2(1 / p-1 / q)}$  and the polynomial volume growth dimension  $m$ , we establish decay estimates for solutions in two specific  $\ell^p(V)$  and  $\ell^q(V)$  spaces (Theorem \ref{t2.5.}). Specifically, if  $p, q \geq 1 ,  r_{2}:=\frac{p q-1}{\max \{p, q\}+1} \frac{m}{2}>1$ , and the initial data satisfy
$\left\|u_{0}\right\|_{\ell^{r_{2}}(V)}+\left\|v_{0}\right\|_{\ell^{r_{2}}(V)}^{p} \leq \hat{C}_{1} \quad(\hat{C}_{1} \text{ is a positive constant})$,
then $(u,v)$  exists globally and remains bounded for all  $t>0$.
\end{itemize}
Notably, our Theorem \ref{t2.5.} complements Theorem 1.1 by establishing global existence precisely in the subcritical case  $\frac{\max \{p, q\}+1}{p q-1}<\frac{m}{2}$,  contrasting with the supercritical blowup case in Theorem 1.1.

Together, these results characterize the full Fujita-type threshold for (1.4) on graphs: blowup occurs above the critical exponent, while global existence holds below it. This provides the first complete threshold analysis for semilinear parabolic systems in discrete geometric settings.

The rest of the paper is arranged in the following way. In section 2, we introduce some notions on graph and state our main results. In section 3, we present some basic results that will be used frequently in the following.  Sections 4-6 are the main part of the paper, which is devoted to the proof of the main results. In section 4, we establish the $L^{p}$-$L^{q}$ estimates of the heat operators on $G$ (Theorems \ref{t2.1}-\ref{t2.3}). Section 5 is dedicated to the proof of the existence of the local solutions of \eqref{1.4.} (Theorem \ref{t2.2}). Sections 6 is devoted to establish the existence of global solutions to the problem \eqref{1.4.} (Theorem \ref{t2.5.}).
\section{Preliminary results}

Let $G=(V,E)$ be an infinite graph with vertex set $V$ and edge set $E$.  For $x,y\in V$, let $xy$ be the edge from $x$ to $y$. We write $y\sim x$ if $xy\in E$. Let $\omega: V \times V\to[0,\infty)$ be an edge weight function satisfying $\omega_{xy}=\omega_{yx}$ for all $x,y \in V$
and $\omega_{xy}>0$ iff $x\sim y$. For each point $x\in V$, we denote 
$${\rm deg}(x)=\#\{y \in V:\, x\sim y\},$$ as the degree of $x$. We say a graph $G=(V,E)$ is locally finite if deg$(x)$ is a finite number for each point $x\in V$. Let $\mu: V\to (0,\infty)$ be a positive measure. We also write the graph $G$ as a quadruple $G=(V,E,\omega,\mu)$. A graph is called connected if any two vertices can be connected via finite edges. Throughout this paper, we suppose that $G=(V,E,\omega,\mu)$ is a locally finite connected graph without loops or without multiple edges.

Denote the space of real-valued functions on $V$ by $V^{\mathbb{R}}$. For any $g\in V^{\mathbb{R}}$, define an integral of $g$ over $V^{\mathbb{R}}$ by
\begin{equation*}
    \int_V g{\rm d}\mu=\sum_{x\in V}g(x)\mu(x).
\end{equation*}

Define
$$
\mu_{\min }:=\inf_{x\in V}\mu(x),~\mu_{\max }:=\sup_{x\in V}\mu(x),~\omega_{\min}=\inf\limits_{e\in E} \omega_{e}~\text{ and }~ D_\mu:=\sup_{x\in V}\frac{m(x)}{\mu(x)},$$
where
$$m(x):=\sum\limits_{y \in V:\,y_{\sim} x}\omega_{xy},\;\,x\in V.$$

Denote the distance $d(x,y)$ as the smallest number of edges of a path between two vertices $x$ and $y$. We define balls
\begin{equation*}
    B(x,r):=\{y\in V:d(x,y)\le R\}.
\end{equation*}
We use the notation 
\begin{equation*}
    \mathcal{V}(A):=\sum_{x\in A}\mu(x).
\end{equation*}
to denote the volume of a subset $A\subset V$.
We usually write $\mathcal{V}(B(x,R))$ for $\mathcal{V}(x,R)$.

\subsection{The Laplacian on graphs}

Define the set of $\ell^p$ integrable functions on $V$ with respect to
the measure $\mu$ by
$$\ell^p(V)=\ell^p(V,\mu)=\left\{f\in V^{\mathbb{R}}:\sum_{x \in V} \mu(x)|f(x)|^p<\infty\right\}, \;\; 1\leq p<\infty,$$
with the norm
$$\|f\|_{\ell^p(V)}=\left(\sum_{x \in V} \mu(x)|f(x)|^p\right)^{1/p},\;\;\;f\in \ell^p(V).$$
For $p=\infty$, we define
$$\ell^{\infty}{(V)}=\left\{f\in V^{\mathbb{R}}:\sup_{x\in V}|f(x)|<\infty\right\},$$
with the norm
$$\|f\|_{\ell^{\infty}{(V)}}=\sup_{x\in V} |f(x)|,\;\;\;f\in \ell^{\infty}{(V)}.$$

Let $X$ be a real Banach space, with norm $\|~\|_X$.
\begin{definition} 
The space $C([0, T] ; X)$ consists of all continuous functions $u:[0, T] \rightarrow X$ with
\begin{equation*}
\|u\|_{C([0, T] ; X)}:=\max_{0 \le t \le T}\|u(t, \cdot)\|_X<\infty.
\end{equation*}
\end{definition}
\begin{lem}[\cite{LCE}]\label{t2.1}
 A strongly measurable function $f:[0, T] \rightarrow X$ is summable if and only if $t \mapsto\|f(t)\|_{X}$ is summable. In this case
   $$
   \left\|\int_0^T f(t)dt\right\|_{X} \leq \int_0^T\|f(t)\|_{X}dt.
   $$
\end{lem}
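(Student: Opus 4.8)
The plan is to reproduce the classical construction of the Bochner integral. Recall that, by definition, a strongly measurable $f:[0,T]\to X$ is \emph{summable} precisely when there exist simple functions $f_n:[0,T]\to X$ with $\int_0^T\|f_n(t)-f(t)\|_X\,dt\to 0$, and then $\int_0^T f\,dt:=\lim_n\int_0^T f_n\,dt$. First I would note this limit exists in $X$: since $\big\|\int_0^T f_n\,dt-\int_0^T f_m\,dt\big\|_X\le\int_0^T\|f_n-f_m\|_X\,dt\to 0$ by the triangle inequality for simple functions, the sequence $\big(\int_0^T f_n\,dt\big)_n$ is Cauchy and $X$ is complete. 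Granting the claimed equivalence, the norm inequality then drops out by approximation: for a simple $s=\sum_i x_i\mathbf{1}_{E_i}$ one has $\big\|\int_0^T s\,dt\big\|_X=\big\|\sum_i x_i\,|E_i|\big\|_X\le\sum_i\|x_i\|_X\,|E_i|=\int_0^T\|s(t)\|_X\,dt$, and letting $n\to\infty$ along the $f_n$ gives $\big\|\int_0^T f\,dt\big\|_X\le\int_0^T\|f(t)\|_X\,dt$, where I also use $\big|\int_0^T\|f_n\|_X\,dt-\int_0^T\|f\|_X\,dt\big|\le\int_0^T\|f_n-f\|_X\,dt\to 0$.

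For the ``only if'' direction, suppose $f$ is summable with approximating simple functions $f_n$. Then $t\mapsto\|f(t)\|_X$ is measurable, being an a.e.\ limit of the scalar simple functions $\|f_n(\cdot)\|_X$ (the norm is continuous), and for $n$ large, $\int_0^T\|f(t)\|_X\,dt\le\int_0^T\|f_n(t)\|_X\,dt+\int_0^T\|f_n(t)-f(t)\|_X\,dt<\infty$, since $\|f_n(\cdot)\|_X$ is a scalar simple function on the finite-measure interval $[0,T]$; hence $t\mapsto\|f(t)\|_X$ is summable.

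The ``if'' direction is where the only real obstacle lies. Assuming $t\mapsto\|f(t)\|_X$ summable, strong measurability supplies simple functions $s_n$ with $s_n(t)\to f(t)$ a.e., but these $s_n$ need not converge to $f$ in $L^1$ (indeed need not even be summable), so dominated convergence cannot be applied directly. My fix is a truncation: put $A_n:=\{t\in[0,T]:\|s_n(t)\|_X\le 2\|f(t)\|_X\}$ and $f_n:=s_n\mathbf{1}_{A_n}$. Each $A_n$ is measurable, so each $f_n$ is again simple; one then checks $f_n(t)\to f(t)$ a.e.\ (at a point where $s_n(t)\to f(t)$: if $\|f(t)\|_X>0$ then $t\in A_n$ for all large $n$, and if $\|f(t)\|_X=0$ then $f_n(t)=0=f(t)$), together with $\|f_n(t)-f(t)\|_X\le\|f_n(t)\|_X+\|f(t)\|_X\le 3\|f(t)\|_X\in L^1$. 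The dominated convergence theorem then yields $\int_0^T\|f_n(t)-f(t)\|_X\,dt\to 0$, so $f$ is summable. Combined with the first paragraph, this establishes both the equivalence and the inequality; the only points demanding a little care are the measurability of the sets $A_n$ and the a.e.\ convergence $f_n\to f$, both of which are routine.
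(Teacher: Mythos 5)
Your argument is correct, and there is nothing in the paper to compare it against: Lemma \ref{t2.1} is not proved in the paper but simply quoted from Evans \cite{LCE}, and what you have written is the standard proof of Bochner's theorem. The construction of the integral as the limit of $\int_0^T f_n\,dt$ via completeness, the norm inequality for simple functions passed to the limit, and the truncation $f_n=s_n\mathbf{1}_{A_n}$ with $A_n=\{t:\|s_n(t)\|_X\le 2\|f(t)\|_X\}$ (giving the dominating function $3\|f\|_X$ for the scalar dominated convergence theorem) are all exactly the classical devices, and your checks of measurability of $A_n$ and of the a.e.\ convergence $f_n\to f$ are sound. One small repair in the ``only if'' direction: you justify measurability of $t\mapsto\|f(t)\|_X$ by calling it an a.e.\ limit of $\|f_n(\cdot)\|_X$, but those $f_n$ only converge to $f$ in the $L^1$ sense, which gives a.e.\ convergence only along a subsequence; it is cleaner, and available since $f$ is assumed strongly measurable, to take the simple functions furnished by strong measurability (or pass to an a.e.\ convergent subsequence), after which the finiteness estimate $\int_0^T\|f\|_X\,dt\le\int_0^T\|f_n\|_X\,dt+\int_0^T\|f_n-f\|_X\,dt<\infty$ goes through as you wrote it.
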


The usual graph Laplacian $\Delta$ is defined by
\begin{equation}\label{2.1.}
    \Delta H(x)=\frac1{\mu(x)}\sum_{y\in V:\, y\sim x}\omega_{x y}(H(y)-H(x)),~x\in V,~H\in  V^{\mathbb{R}}.
\end{equation}
Obviously, the measure $\mu$ has a crucial part in the definition of Laplacian. Considering the weight $\omega$ on $E$, two typical forms of the Laplacian as follows:
\begin{itemize}
    \item $\mu(x)=\sum\limits_{y\sim x}\omega_{xy}$ for all $x\in V$, which is the normalized graph Laplacian;
    \item $\mu(x)\equiv 1$ for all $x\in V$, which is called the combinatorial graph Laplacian.
\end{itemize}

It follows from \cite{KL} that
$\Delta$ is bounded on $\ell^p(V,\mu)$ for all $p\in [1,\infty]$ if and only if
$$D_\mu<\infty.$$

In this paper, we always make the assumption that the measure $m$ on V is non-degenerate, namely 
$$\mu_{min}=\inf_{x\in V}\mu(x)>0.$$ 
Such an assumption is quite mild, for it is automatically satisfied for any combinatorial Laplacian, and it gives rise to extremely useful properties for $\ell^p$ spaces.
\begin{lem}[\cite{HL}]\label{l3.4}
    Suppose $p_0\ge 1$ and $\mu_{min}>0$. Then $\ell^{p_0}(V)\subset\ell^{\infty}(V)$. Furthermore, $$\|f\|_{\ell^{\infty}(V)} \le\mu_{min}^{-\frac{1}{p_0}}\|f\|_{\ell^{p_0}(V).}$$
\end{lem}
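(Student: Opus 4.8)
The plan is to extract a single term from the series defining the $\ell^{p_0}$-norm and bound it below using the nondegeneracy of $\mu$. Fix $f \in \ell^{p_0}(V)$ and an arbitrary vertex $x \in V$. Since every summand in $\sum_{y \in V}\mu(y)|f(y)|^{p_0}$ is nonnegative, retaining only the term $y=x$ yields
\[
\mu_{\min}\,|f(x)|^{p_0} \le \mu(x)\,|f(x)|^{p_0} \le \sum_{y \in V}\mu(y)|f(y)|^{p_0} = \|f\|_{\ell^{p_0}(V)}^{p_0},
\]
where the first inequality uses $\mu(x) \ge \mu_{\min}$.

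Next I would divide by $\mu_{\min}$ — here the hypothesis $\mu_{\min}>0$ is exactly what makes this step legitimate — and take $p_0$-th roots (valid since $t\mapsto t^{1/p_0}$ is nondecreasing on $[0,\infty)$ for $p_0\ge 1$), obtaining $|f(x)| \le \mu_{\min}^{-1/p_0}\|f\|_{\ell^{p_0}(V)}$. As $x\in V$ was arbitrary, taking the supremum over $V$ gives $\|f\|_{\ell^{\infty}(V)} \le \mu_{\min}^{-1/p_0}\|f\|_{\ell^{p_0}(V)}$, which is the asserted inequality.

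Finally, since $f\in\ell^{p_0}(V)$ the right-hand side is finite, so $\|f\|_{\ell^{\infty}(V)}<\infty$, i.e. $f\in\ell^{\infty}(V)$; as $f$ was an arbitrary element of $\ell^{p_0}(V)$, this proves the inclusion $\ell^{p_0}(V)\subset\ell^{\infty}(V)$. There is no genuine obstacle here: the argument is a one-line consequence of term-by-term positivity of the defining series, and the only point requiring care is that the lower bound $\mu_{\min}>0$ is indispensable — without it a function could be unbounded while still lying in $\ell^{p_0}(V)$ if $\mu$ decays sufficiently fast along some sequence of vertices.
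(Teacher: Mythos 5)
Your argument is correct and is exactly the standard proof of this lemma: the paper itself cites it from \cite{HL} without proof, and the proof there is the same single-term extraction $\mu_{\min}|f(x)|^{p_0}\le \mu(x)|f(x)|^{p_0}\le \|f\|_{\ell^{p_0}(V)}^{p_0}$ followed by taking $p_0$-th roots and the supremum over $x$. No gaps; your closing remark on why $\mu_{\min}>0$ is indispensable is also accurate.
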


\begin{lem}[\cite{HL}]\label{l3.5}
    Let $1\le p_0\le p_1$. Assume $\mu_{min}>0$. Then $\ell^{p_0}(V)\subset\ell^{p_1}(V)$. Moreover, $$\|f\|_{\ell^{p_{1}}(V)}\le\mu_{min}^{\frac{p_0-p_1}{p_0p_1}}\|f\|_{\ell^{p_{0}}(V)}.$$
\end{lem}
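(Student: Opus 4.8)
The plan is to control the $\ell^{p_1}$-norm by the $\ell^{p_0}$-norm by peeling off the excess power of $|f|$ with the pointwise bound supplied by Lemma \ref{l3.4}. If $\|f\|_{\ell^{p_0}(V)}=\infty$ there is nothing to prove, so I may assume $f\in\ell^{p_0}(V)$; the case $p_0=p_1$ is trivial, so I may also assume $p_0<p_1$. By Lemma \ref{l3.4}, $f\in\ell^{\infty}(V)$ and
\[
\|f\|_{\ell^{\infty}(V)}\le\mu_{min}^{-\frac{1}{p_0}}\|f\|_{\ell^{p_0}(V)}.
\]

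Next, for each $x\in V$ I would split the exponent as $p_1=p_0+(p_1-p_0)$ and estimate
\[
\mu(x)|f(x)|^{p_1}=|f(x)|^{p_1-p_0}\,\mu(x)|f(x)|^{p_0}\le \|f\|_{\ell^{\infty}(V)}^{\,p_1-p_0}\,\mu(x)|f(x)|^{p_0}.
\]
Summing over $x\in V$ gives $\|f\|_{\ell^{p_1}(V)}^{p_1}\le\|f\|_{\ell^{\infty}(V)}^{\,p_1-p_0}\,\|f\|_{\ell^{p_0}(V)}^{p_0}$. Substituting the bound on $\|f\|_{\ell^{\infty}(V)}$ from Lemma \ref{l3.4} and taking $p_1$-th roots yields
\[
\|f\|_{\ell^{p_1}(V)}\le \mu_{min}^{-\frac{p_1-p_0}{p_0p_1}}\|f\|_{\ell^{p_0}(V)}=\mu_{min}^{\frac{p_0-p_1}{p_0p_1}}\|f\|_{\ell^{p_0}(V)},
\]
which is finite; in particular $f\in\ell^{p_1}(V)$, establishing both the inclusion $\ell^{p_0}(V)\subset\ell^{p_1}(V)$ and the stated norm estimate.

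There is no real obstacle here: the argument is a two-line computation once Lemma \ref{l3.4} is available, and the only point worth flagging is that the hypothesis $\mu_{min}>0$ is precisely what makes the embedding $\ell^{p_0}(V)\hookrightarrow\ell^{\infty}(V)$ available; without it the conclusion fails. If one prefers to avoid invoking Lemma \ref{l3.4}, the same bound follows directly: since $p_1/p_0\ge 1$ and $\mu(x)\ge\mu_{min}$,
\[
\mu(x)|f(x)|^{p_1}=\mu(x)^{1-\frac{p_1}{p_0}}\big(\mu(x)|f(x)|^{p_0}\big)^{\frac{p_1}{p_0}}\le \mu_{min}^{1-\frac{p_1}{p_0}}\big(\mu(x)|f(x)|^{p_0}\big)^{\frac{p_1}{p_0}},
\]
and summing with the elementary inequality $\sum_x a_x^{s}\le\big(\sum_x a_x\big)^{s}$ for $s=p_1/p_0\ge 1$, $a_x\ge 0$, gives $\|f\|_{\ell^{p_1}(V)}^{p_1}\le\mu_{min}^{1-p_1/p_0}\|f\|_{\ell^{p_0}(V)}^{p_1}$, i.e. the same conclusion after taking $p_1$-th roots.
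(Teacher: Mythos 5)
Your proof is correct. The paper states this lemma as a citation to \cite{HL} and gives no proof of its own, and your argument is exactly the standard one behind the cited result: the interpolation $\|f\|_{\ell^{p_1}(V)}^{p_1}\le\|f\|_{\ell^{\infty}(V)}^{p_1-p_0}\|f\|_{\ell^{p_0}(V)}^{p_0}$ combined with Lemma \ref{l3.4}, with correct bookkeeping of the exponent $\frac{p_0-p_1}{p_0p_1}$; your second, self-contained variant using $\mu(x)\ge\mu_{\min}$ and $\sum_x a_x^{s}\le(\sum_x a_x)^{s}$ for $s\ge1$ is also valid and makes the role of $\mu_{\min}>0$ transparent.
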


Now, we introduce the gradient forms associated to the Laplacian on graphs as introduced in \cite{BHLLMY}.

 For $f,g\in V^{\mathbb{R}}$, the gradient form of $f$ and $g$ reads
\begin{equation*}
    \Gamma(f,g)(x)=\frac1{2\mu(x)}\sum_{y\in V:\,y\sim x}{\omega}_{x y}(f(y)-f(x))(g(y)-g(x)),
\end{equation*}
the iterated gradient form $\Gamma_2$ is defined by
\begin{equation*}
    \Gamma_2(f,g)(x)=\frac12(\Delta \Gamma(f,g)-\Gamma(f,\Delta g)-\Gamma(\Delta f,g))(x).
\end{equation*}
We will write $\Gamma(f)=\Gamma(f,f)$ and $\Gamma_2(f)=\Gamma_2(f,f)$.

In order to prove stochastic completeness for the semigroups associated to unbounded Laplacian on graphs, Hua and Lin \cite{HL} introduce a condition for the completeness of infinite weighted graphs: 
A graph $G=(V, E, \omega, \mu)$ is called {\it complete} if there exists a nondecreasing sequence of finitely supported functions $\{\eta_k\}_{k = 1}^{\infty}$ on $V$ such that
$$\lim_{k \to \infty} \eta_k =\mathbf{ 1} \quad \text{and} \quad \Gamma(\eta_k) \leq \frac{1}{k},$$
where $\mathbf{1}$ is the constant function on $V$. 
This condition was defined for Markov diffusion semigroups in Definition 3.3.9 of \cite{BGL} and adopted to graphs in \cite{GLLY,HL}. It follows from Theorem 2.8 in \cite{HL} that a large class of graphs is complete.

\subsection{The heat kernel on graphs and curvature dimension condition}\label{s2.2}

For $t > 0$ and $x,y\in V$, if $P(t,x,y)$ is the smallest nonnegative function satisfying
\begin{equation*}
\begin{cases}
    \frac{\partial}{\partial t}P(t,x,y)+\Delta P(t,x,y)=0 \quad\text{in either }x\text{ or }y,~\text{ for } t>0,\ x,y\in V,\\
    P(0,x,y)=\delta_x(y)=
    \begin{cases}
        \frac{1}{\mu(y)},&x = y,\\
        0,&x\neq y,
    \end{cases}
\end{cases}
\end{equation*}
then we say that $P$ is the heat kernel on $G$.

For convenience, we summarize some important properties of the heat kernel $P(t,x,y)$ on $G$ in the following propositions.

\begin{prop}[\cite{RW}]\label{p2.1} For $t,s>0$ and any $x,y\in V$, we have
    
    {\rm(i)}~$P_t(t,x,y)=\Delta_xP(t,x,y)=\Delta_yP(t,x,y)$,\vspace{1mm}
    
    {\rm(ii)}~$P(t,x,y)> 0$,\vspace{1mm}
    
    {\rm(iii)}~$P(t,x,y)=P(t,y,x)$,\vspace{1mm}
    
    {\rm(iv)}~$\sum\limits_{y\in V}P(t,x,y)\mu(y)\leq 1$,\vspace{1mm}
    
    {\rm(v)}~$\sum\limits_{z\in V}P(t,x,z)P(s,z,y)\mu(z)=P(t+s,x,y)$.
\end{prop}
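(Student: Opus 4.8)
\noindent The plan is to construct the minimal heat kernel $P$ via a domain exhaustion and then transport the five properties from the finite level to the limit. Fix a basepoint $x_0\in V$ and set $\Omega_n:=B(x_0,n)$: this is finite (local finiteness), connected (connectedness of $G$), and increases to $V$. Let $\Delta_n$ be the Dirichlet restriction of $\Delta$ to $\Omega_n$, i.e.\ the operator on $\mathbb{R}^{\Omega_n}$ obtained by letting $\Delta$ act on functions extended by $0$ off $\Omega_n$; as a finite matrix $\Delta_n$ is self-adjoint for $\langle f,g\rangle_\mu=\sum_x f(x)g(x)\mu(x)$ (the $\mu$-factors cancel in $\langle\Delta_nf,g\rangle_\mu$), its off-diagonal entries $\omega_{xz}/\mu(x)\ge0$ form the adjacency pattern of the connected graph $\Omega_n$, its diagonal equals $-m(x)/\mu(x)\le0$, and $\Delta_n\mathbf 1\le0$. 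Let $P_n(t,x,y)$ be the kernel of the heat semigroup $e^{t\Delta_n}$ on $\Omega_n$, normalised so that $P_n(0,x,\cdot)=\delta_x$. At this finite level the five claims are elementary: (i) is the definition of $P_n$ together with self-adjointness of $\Delta_n$ (which identifies the $\Delta_x$- and $\Delta_y$-actions), (iii) is self-adjointness again, (v) is $e^{t\Delta_n}e^{s\Delta_n}=e^{(t+s)\Delta_n}$ read off in the kernel, $P_n(t,x,y)>0$ for $x,y\in\Omega_n$ follows from $e^{t\Delta_n}=e^{-ct}\sum_{k\ge0}t^k(\Delta_n+cI)^k/k!$ with $c$ large (the matrix $\Delta_n+cI$ being entrywise nonnegative and irreducible), and $\sum_{y\in\Omega_n}P_n(t,x,y)\mu(y)\le1$ holds because $\mathbf 1$ is a time-independent supersolution of the Dirichlet heat equation on $\Omega_n$ dominating the initial datum, by the parabolic maximum principle.

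\noindent Next I would prove monotonicity $P_n\le P_{n+1}$ on $\Omega_n$: the restriction of $w:=P_{n+1}-P_n$ to $\Omega_n$ solves $\partial_t w=\Delta_n w+g$ with zero initial data and a nonnegative source $g$ generated by the values of $P_{n+1}$ on $\Omega_{n+1}\setminus\Omega_n$, so $w\ge0$ (maximum principle, or $w(t)=\int_0^t e^{(t-s)\Delta_n}g(s)\,ds\ge0$ by positivity-preservation of the Dirichlet semigroup). Hence $P(t,x,y):=\lim_n P_n(t,x,y)$ exists and is finite, because $P_n(t,x,y)\mu(y)\le\sum_{z\in\Omega_n}P_n(t,x,z)\mu(z)\le1$. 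One then checks that $P$ solves the heat equation with initial datum $\delta_x$: the spatial part is immediate since $\Delta_x$ only touches the finitely many neighbours of $x$, so $\Delta_xP=\lim_n\Delta_xP_n$; for the time derivative I would pass to the integrated form $P_n(t,x,y)=\delta_x(y)+\int_0^t\Delta_xP_n(s,x,y)\,ds$, let $n\to\infty$ under the integral by dominated convergence — the integrand is bounded, uniformly in $n$ and $s$, by $2m(x)/(\mu(x)\mu(y))$ thanks to $P_n\le1/\mu(y)$ — obtain the same identity for $P$, and differentiate back (this also gives $P(t,x,y)\to\delta_x(y)$ as $t\to0^+$). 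Finally, $P$ is minimal: any nonnegative solution $\widetilde P$ of the heat-kernel problem dominates each $P_n$ on $\Omega_n$ by the maximum principle (same initial datum $\delta_x$, but ``larger boundary data'' than the Dirichlet kernel $P_n$), hence $\widetilde P\ge P$; thus $P$ is the object of the definition and is independent of the exhaustion.

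\noindent It remains to pass (ii)--(v) to the limit, and monotone convergence does everything: $P\ge P_n>0$ at any fixed $x,y$ once $\Omega_n$ contains a path joining them gives (ii); symmetry survives the pointwise limit, giving (iii); $\sum_{y\in V}P(t,x,y)\mu(y)=\lim_n\sum_{y\in\Omega_n}P_n(t,x,y)\mu(y)\le1$ gives (iv); and applying monotone convergence to $\sum_{z\in\Omega_n}P_n(t,x,z)P_n(s,z,y)\mu(z)=P_n(t+s,x,y)$, every summand being nonnegative and nondecreasing in $n$, gives (v). The only point needing care beyond finite-dimensional bookkeeping and monotone convergence — hence the step I expect to be the main obstacle — is the interchange of $\lim_n$ with $\partial_t$ in (i), i.e.\ verifying that the monotone limit $P$ is a bona fide classical (not merely mild) solution. (If one assumes $D_\mu<\infty$, so $\Delta$ is bounded on $\ell^2(V,\mu)$, the exhaustion can be dropped: $e^{t\Delta}$ is a norm-convergent power series whose kernel plainly satisfies (i), (iii), (v), while (ii) and (iv) follow from positivity-preservation and $\Delta\mathbf 1=0$.)
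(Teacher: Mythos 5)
The paper gives no proof of this proposition: it is imported verbatim from the cited reference \cite{RW} (Wojciechowski), so there is no internal argument to compare against. Your exhaustion construction is exactly the standard route taken in that literature (Dirichlet restrictions $\Delta_n$ to an exhausting sequence of finite connected balls, monotone limits of the finite kernels, minimality via comparison), and as written it is essentially correct: the finite-level verifications of (i)--(v), the monotonicity $P_n\le P_{n+1}$ via Duhamel with a nonnegative boundary source, the uniform bound $P_n(t,x,y)\le 1/\mu(y)$, and the monotone-convergence passage of (ii)--(v) to the limit all go through. Two small points are worth tightening. First, the step you yourself flag: after passing to the integrated identity $P(t,x,y)=\delta_x(y)+\int_0^t\Delta_xP(s,x,y)\,ds$, "differentiating back" needs the observation that the identity forces $P(\cdot,x,y)$ to be Lipschitz on compact time intervals (the integrand is bounded by $2m(x)/(\mu(x)\mu(y))$), hence $\Delta_xP(\cdot,x,y)$, a finite linear combination of such functions, is continuous in $t$, and only then is $P$ a classical $C^1$ solution with $P_t=\Delta_xP$; the $\Delta_y$-half of (i) then follows from the symmetry in (iii), which you should say explicitly. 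Second, note that the displayed definition of the heat kernel in Section \ref{s2.2} carries the sign $\partial_tP+\Delta P=0$, which is inconsistent with item (i) of the proposition itself; you silently work with $\partial_tP=\Delta P$, which is the convention the proposition (and the rest of the paper) actually uses, but it would be worth remarking that the definition contains a sign typo rather than adopting the corrected convention without comment. Your closing parenthetical about the bounded case $D_\mu<\infty$ is fine as a remark, but since the proposition is stated without that hypothesis, the exhaustion argument you give is the one that is actually needed.
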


\begin{prop}[\cite{BHLLMY}]\label{p2.2}
    Suppose $D_{\mu}<\infty$. Then for any $u\in \ell^{\infty}(V)$,
    \begin{equation}\label{2.3}
        \begin{aligned}
            \sum_{y \in V} P(t, x, y) u(y) \mu(y) & =\sum_{k=0}^{+\infty} \frac{t^k \Delta^k}{k !} u(x) \\
            & :=u(x)+t \Delta u(x)+\frac{t^2}{2 !} \Delta^2 u(x)+\cdots
        \end{aligned}
    \end{equation}
    for all $t\in[0,+\infty)$ and $x\in V$.
\end{prop}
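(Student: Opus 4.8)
The statement to prove is Proposition 2.2 (labeled \texttt{p2.2}): under the assumption $D_\mu < \infty$, for any $u \in \ell^\infty(V)$ and all $t \geq 0$, $x \in V$,
\[
\sum_{y \in V} P(t,x,y) u(y) \mu(y) = \sum_{k=0}^{\infty} \frac{t^k \Delta^k}{k!} u(x).
\]

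\textbf{Proof proposal.}

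The plan is to exploit that $D_\mu < \infty$ makes $\Delta$ a bounded operator on $\ell^\infty(V)$, so the right-hand side is a genuinely convergent power series in the Banach algebra $B(\ell^\infty(V))$, and then identify it with the heat semigroup applied to $u$. First I would record the operator norm bound: from the definition \eqref{2.1.} of $\Delta$ and $D_\mu = \sup_x m(x)/\mu(x)$, one has $\|\Delta u\|_{\ell^\infty(V)} \leq 2 D_\mu \|u\|_{\ell^\infty(V)}$, hence $\|\Delta^k u\|_{\ell^\infty(V)} \leq (2D_\mu)^k \|u\|_{\ell^\infty(V)}$. Therefore $\sum_k \frac{t^k}{k!}\|\Delta^k u\|_{\ell^\infty(V)} \leq e^{2D_\mu t}\|u\|_{\ell^\infty(V)} < \infty$, so the series $\sum_k \frac{t^k \Delta^k}{k!} u$ converges absolutely in $\ell^\infty(V)$ for every $t \geq 0$; call its sum $w(t) = w(t,\cdot)$. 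This also shows $t \mapsto w(t)$ is $\ell^\infty$-valued, continuous (indeed real-analytic) in $t$, with $w(0) = u$, and that it may be differentiated term by term to give $\partial_t w(t) = \Delta w(t)$ in $\ell^\infty(V)$; equivalently $w(t) = e^{t\Delta} u$ for the bounded-generator semigroup.

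Next I would set $\tilde w(t,x) := \sum_{y \in V} P(t,x,y) u(y)\mu(y)$. By Proposition \ref{p2.1}(iv) and $u \in \ell^\infty(V)$ this sum converges absolutely with $|\tilde w(t,x)| \leq \|u\|_{\ell^\infty(V)}$, so $\tilde w(t,\cdot) \in \ell^\infty(V)$; by (i) and the boundedness of $\Delta$ one checks $\partial_t \tilde w(t,x) = \Delta_x \tilde w(t,x)$ (differentiating under the summation sign, justified by local finiteness of $G$ together with the uniform bound from (iv)), and $\tilde w(0,x) = u(x)$. So both $w$ and $\tilde w$ solve the same $\ell^\infty$-valued Cauchy problem $\partial_t f = \Delta f$, $f(0) = u$, with $\Delta$ bounded on $\ell^\infty(V)$. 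The final step is a uniqueness argument: set $g(t) = w(t) - \tilde w(t) \in \ell^\infty(V)$, which satisfies $\partial_t g = \Delta g$, $g(0) = 0$, and $\sup_{s \in [0,T]}\|g(s)\|_{\ell^\infty(V)} < \infty$ for each $T$. Writing $g(t) = \int_0^t \Delta g(s)\, ds$ (Lemma \ref{t2.1}) and iterating gives $\|g(t)\|_{\ell^\infty(V)} \leq \frac{(2D_\mu t)^k}{k!} \sup_{s\in[0,t]}\|g(s)\|_{\ell^\infty(V)}$ for every $k$, whence $g \equiv 0$. This yields $\tilde w(t,x) = w(t,x)$ for all $t \geq 0$, $x \in V$, which is exactly \eqref{2.3}.

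The main obstacle is the rigorous justification of differentiating the two series/sums under the summation sign and, relatedly, making precise in what sense $\tilde w$ solves $\partial_t \tilde w = \Delta \tilde w$ as an $\ell^\infty$-valued equation: Proposition \ref{p2.1}(i) gives the pointwise heat equation for the kernel, but one must upgrade this to a statement about $\tilde w$ with uniform-in-$x$ control, using $\sum_y P(t,x,y)\mu(y) \le 1$ and $D_\mu < \infty$ to dominate the $y$-sums defining $\Delta_x \tilde w$ and $\partial_t \tilde w$. Once that interchange is legitimate, the rest is the standard bounded-generator semigroup uniqueness argument carried out in $\ell^\infty(V)$ via Lemma \ref{t2.1}. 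An alternative, perhaps cleaner, route avoids uniqueness entirely: expand $P(t,x,y) = \sum_k \frac{t^k}{k!} \partial_t^k P(0,x,y) = \sum_k \frac{t^k}{k!}\Delta_x^k \delta_y(x)$ (the kernel is analytic in $t$ for bounded $\Delta$), substitute into $\sum_y P(t,x,y)u(y)\mu(y)$, and interchange the $k$-sum with the $y$-sum — justified by absolute convergence since $\sum_k \frac{t^k}{k!}\sum_y |\Delta_x^k \delta_y(x)|\,|u(y)|\mu(y)$ is controlled by $e^{2D_\mu t}\|u\|_{\ell^\infty(V)}$ — to obtain $\sum_k \frac{t^k}{k!}\Delta^k u(x)$ directly. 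I would present the semigroup-uniqueness version as the main argument and mention the direct expansion as a remark.
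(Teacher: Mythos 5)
The paper does not prove this proposition at all: it is imported verbatim from \cite{BHLLMY} (the citation in the proposition header), so there is no internal proof to compare against. Judged on its own, your argument is the standard and essentially correct one for the bounded case: $D_\mu<\infty$ gives $\|\Delta f\|_{\ell^\infty(V)}\le 2D_\mu\|f\|_{\ell^\infty(V)}$, hence the exponential series $w(t)=\sum_k \frac{t^k}{k!}\Delta^k u$ converges in $\ell^\infty(V)$ and solves $\partial_t w=\Delta w$, $w(0)=u$; the kernel average $\tilde w(t,x)=\sum_y P(t,x,y)u(y)\mu(y)$ is bounded by $\|u\|_{\ell^\infty(V)}$ via Proposition \ref{p2.1}(iv) and solves the same problem by (i); and your iteration $\|g(t)\|_{\ell^\infty(V)}\le \frac{(2D_\mu t)^k}{k!}\sup_{[0,t]}\|g\|_{\ell^\infty(V)}$ is a valid Gronwall-type uniqueness argument for bounded solutions, which closes the identification. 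Two points to tighten. First, for the step $\partial_t\tilde w=\Delta_x\tilde w$ it is cleaner to avoid dominating difference quotients termwise and instead write $P(t+h,x,y)-P(t,x,y)=\int_t^{t+h}\Delta_xP(s,x,y)\,ds$, sum in $y$, and apply Fubini using the uniform bound $\sum_y|\Delta_xP(s,x,y)|\,|u(y)|\mu(y)\le 2D_\mu\|u\|_{\ell^\infty(V)}$; this gives the integrated equation $\tilde w(t,x)-u(x)=\int_0^t\Delta_x\tilde w(s,x)\,ds$, which is all your uniqueness iteration actually needs (and it also yields continuity at $t=0$). Second, your ``alternative, cleaner route'' is circular as stated: the expansion $P(t,x,y)=\sum_k\frac{t^k}{k!}\Delta_x^k\delta_y(x)$ presupposes that the minimal heat kernel coincides with the kernel of the bounded-generator semigroup, i.e.\ the proposition itself applied to $u=\delta_y$; it is fine as a corollary once the main argument is done, but not as an independent proof. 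With the first point repaired and the remark downgraded accordingly, your proof is sound.
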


Let us recall the curvature dimension conditions introduced in \cite{BHLLMY}.

\begin{definition}
    Let $x\in V$, $n\in\mathbb{R}^+=(0,\infty)$ and  $K\in\mathbb{R}$. We call that a graph $G$ satisfies the exponential curvature dimension inequality $CDE(x,n,K)$ at the vertex $x$, if for any function $f:V\to \mathbb{R}^+$ satisfying $\Delta f(x)<0$, there holds:
    $$\Gamma_2(f)(x)-\Gamma\Bigg(f,\frac{\Gamma(f)}{f}\Bigg)(x)\geq\frac1{n}[(\Delta f)(x)]^2+K\Gamma(f)(x).$$
    We say that $CDE(n,K)$ is satisfied if $CDE(x,n,K)$ is satisfied for all $x\in V$.
\end{definition}

\begin{definition} Let $x\in V$, $n\in\mathbb{R}^+$ and  $K\in\mathbb{R}$. A graph $G$ satisfies the exponential curvature dimension inequality $CDE'(x,n,K)$, if for any function $f:V\to \mathbb{R}^+$, we have
    $$\Gamma_2(f)(x)-\Gamma\Bigg(f,\frac{\Gamma(f)}{f}\Bigg)(x)\geq
    \frac1{n}[f(x)(\Delta \log f)(x)]^2+K\Gamma(f)(x),~x\in V.$$
    We say that $CDE'(n,K)$ is satisfied if $CDE'(x,n,K)$ is satisfied for all $x\in V$.
\end{definition}

The relation between $CDE(n, K)$ and $C D E^{\prime}(n, K)$ is the following:

\begin{remark}[\cite{BHLLMY, HLLY}]\label{r2.1}   $CDE^{\prime}(n,K)$ implies $CDE(n,K)$. \end{remark}

\begin{prop}[\cite{BHLLMY}]\label{p2.3} 
Let $G=(V, E, \omega, \mu)$ be a graph.
Assume $\mu_{\min}>0, \mu_{\max}<\infty, \omega_{\min}>0$, $D_{\mu}<\infty$ and $G$ satisfies $CDE(n,0)$.  Then there exists a positive constant $C_1=C_1(n, \mu_{max}, \omega_{\min})$ such that, for any $x, y \in V$ and $t> 0$,
    \begin{equation*}
        P(t,x,y)\leq\frac{C_1}{\mathcal{V}(x,\sqrt{t})}.
    \end{equation*}
\end{prop}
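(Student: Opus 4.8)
The plan is to obtain the bound from the parabolic Li--Yau--Harnack inequality proved in \cite{BHLLMY}, which is available precisely under the hypotheses of the statement ($CDE(n,0)$ together with $\mu_{\min}>0$, $\mu_{\max}<\infty$, $\omega_{\min}>0$, $D_\mu<\infty$): there are constants $c_1,c_2>0$ depending only on $n,\mu_{\max},\omega_{\min}$ such that every positive solution $u$ of $\partial_s u=\Delta u$ on $V\times(0,\infty)$ satisfies
$$u(z,s)\ \le\ u(w,\tau)\,\Bigl(\tfrac{\tau}{s}\Bigr)^{c_1}\exp\!\Bigl(c_2\,\tfrac{d(z,w)^2}{\tau-s}\Bigr),\qquad 0<s<\tau,\ \ z,w\in V.$$
Granting this inequality, I would not need volume doubling or any Nash-type argument: the estimate drops out by integrating the Harnack inequality for the heat kernel over the ball $B(x,\sqrt t)$ and using the mass bound in Proposition \ref{p2.1}(iv).

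Concretely, I would fix $x,y\in V$ and $t>0$ and apply the Harnack inequality to $u(z,s):=P(s,y,z)$, which by Proposition \ref{p2.1}(i)--(ii) is a strictly positive solution of the heat equation in the variables $(z,s)$. Taking $(z,s)=(x,t)$ and $(w,\tau)=(x',2t)$ for an arbitrary $x'\in B(x,\sqrt t)$, the time ratio equals $2$ and the Gaussian factor is bounded by $\exp(c_2 d(x,x')^2/t)\le e^{c_2}$ since $d(x,x')\le\sqrt t$; hence
$$P(t,y,x)\ \le\ 2^{c_1}e^{c_2}\,P(2t,y,x')\ =:\ C_0\,P(2t,y,x')\qquad\text{for every }x'\in B(x,\sqrt t),$$
with $C_0=C_0(n,\mu_{\max},\omega_{\min})$. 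Multiplying by $\mu(x')$ and summing over $x'\in B(x,\sqrt t)$, then enlarging the sum to all of $V$,
$$P(t,y,x)\,\mathcal V(x,\sqrt t)\ =\ \sum_{x'\in B(x,\sqrt t)}P(t,y,x)\,\mu(x')\ \le\ C_0\sum_{x'\in V}P(2t,y,x')\,\mu(x')\ \le\ C_0,$$
the last step being Proposition \ref{p2.1}(iv). By symmetry of the heat kernel (Proposition \ref{p2.1}(iii)) this reads $P(t,x,y)\le C_0/\mathcal V(x,\sqrt t)$, so one may take $C_1=C_0$. Note that the on-diagonal case $x=y$ is contained in this with no modification; alternatively one could first prove the $x=y$ case and then pass to general $x,y$ via $P(t,x,y)\le P(t,x,x)^{1/2}P(t,y,y)^{1/2}$ (from Proposition \ref{p2.1}(iii),(v)), but that route only yields the weaker symmetric denominator $\sqrt{\mathcal V(x,\sqrt t)\,\mathcal V(y,\sqrt t)}$, so the argument above is preferable.

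The only substantive obstacle is the Harnack inequality itself — that is, reducing it to the discrete Li--Yau gradient estimate of \cite{BHLLMY} and integrating that estimate along space--time paths; this is exactly where the bounded-geometry assumptions $\mu_{\max}<\infty$, $\omega_{\min}>0$, $D_\mu<\infty$ are genuinely used, rather than in the elementary summation above. In writing out the details I would watch two points: first, the exponents $c_1,c_2$ in the discrete Harnack inequality are not literally the Riemannian Li--Yau exponents, so their dependence on $n$ (and on $\mu_{\max},\omega_{\min}$) must be tracked to justify the asserted form of $C_1$; and second, one must check that the Harnack inequality applies to $P(\cdot,y,\cdot)$ as a bona fide positive solution on all of $V\times(0,\infty)$, which is ensured here because $P$ is the minimal nonnegative heat kernel and therefore enjoys the properties collected in Proposition \ref{p2.1}.
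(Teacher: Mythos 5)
Your argument is correct, but it should be said up front that the paper offers no proof of this statement at all: Proposition \ref{p2.3} is quoted verbatim from \cite{BHLLMY} as a known heat-kernel upper bound, with the Harnack inequality (Proposition \ref{p2.6}) quoted from the same source. Granting Proposition \ref{p2.6}, your derivation is sound and is the standard one: apply the Harnack inequality to the positive solution $u(z,s)=P(s,y,z)$ with times $t<2t$ and $x'\in B(x,\sqrt t)$, so that $P(t,y,x)\le 2^{n}e^{4\mu_{\max}/\omega_{\min}}\,P(2t,y,x')$, then sum against $\mu(x')$ over the ball and invoke $\sum_{x'\in V}P(2t,y,x')\mu(x')\le 1$ (Proposition \ref{p2.1}(iv)) and symmetry (iii) to get $P(t,x,y)\le C_1/\mathcal V(x,\sqrt t)$ with $C_1=2^{n}e^{4\mu_{\max}/\omega_{\min}}$; this also settles your worry about constant dependence, since the exponents in Proposition \ref{p2.6} are explicit and involve only $n$, $\mu_{\max}$, $\omega_{\min}$. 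It is worth noting that the present paper uses exactly this ball-averaging device, in mirror image, inside its proof of Theorem \ref{l3.6} (the step \eqref{3.5.}, where $P^r(t,y,x)$ is compared with $P(2t,z,x)$ for $z\in B(y,\sqrt t)$ and then summed using Proposition \ref{p2.1}(iv)), so your proposal effectively reconstructs the ingredient that the paper imports as a black box rather than re-deriving it. The only genuinely unproved input in your write-up is the Harnack inequality itself, which you correctly identify as the content of \cite{BHLLMY}; within the logical framework of this paper, where Proposition \ref{p2.6} is available as a stated hypothesis-matched result, your proof is complete.
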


\begin{prop}[\cite{GLLY}]\label{p2.4}
Let  $G=(V, E, w, \mu)$  be a complete graph satisfying  $CDE^{'}(n,0)$.
Assume  $\mu_{\min}>0$,  $\mu_{\max}<\infty$ and  $\omega_{\min}>0$.
Then there exists a constant  $C^*=C^*(n,\mu_{max}, \omega_{\min})>0$ so that for any  $x,y\in V$  and for all  $t>0$,
\begin{equation}
P(t,x,y)\leq\frac{C^*}{\mathcal{V}(x,\sqrt{t})}.
\end{equation}
\end{prop}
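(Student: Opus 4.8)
The plan is to run the standard Li-Yau $\Rightarrow$ parabolic Harnack $\Rightarrow$ volume doubling $\Rightarrow$ heat kernel bound pipeline in the discrete setting, as in Bauer-Horn-Lin-Lippner-Mangoubi-Yau~\cite{BHLLMY}, but with the boundedness hypothesis $D_\mu<\infty$ replaced by completeness of $G$ throughout, in the spirit of Hua-Lin~\cite{HL}. Observe first that $P(t,x,y)\le C^*/\mathcal V(x,\sqrt t)$ is exactly the ``on-diagonal'' part of a full Gaussian-type upper bound (what remains after discarding the exponential factor), so it suffices to produce such a bound; the role of completeness is that it furnishes finitely supported cutoffs $\{\eta_k\}$ with $\eta_k\uparrow\mathbf 1$ and $\Gamma(\eta_k)\le 1/k$, the ingredient that keeps the maximum-principle arguments alive when $\Delta$ is unbounded and $V$ is infinite.

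First I would record the Li-Yau gradient estimate of Hua-Lin: under $CDE'(n,0)$ and completeness, every positive solution $u$ of $u_t=\Delta u$ on $V\times(0,\infty)$ satisfies a differential Harnack inequality roughly of the form $\Gamma(\sqrt u)/u-(\sqrt u)_t/\sqrt u\le n/(2t)$, up to the usual discrete corrections. One proves it by applying $CDE'(n,0)$ to the space-time function $t\big(\Gamma(\sqrt u)/u-\partial_t\log\sqrt u\big)$ localized by $\eta_k$, analyzing the maximum of the localized quantity, and sending $k\to\infty$, the bound $\Gamma(\eta_k)\le 1/k$ absorbing the cutoff errors. Then I would integrate this estimate along a space-time path joining $(x,s)$ to $(y,t)$ by $d(x,y)$ unit edge-steps, advancing time by $(t-s)/d(x,y)$ per step, to obtain a constant $C=C(n,\mu_{\max},\omega_{\min})$ with
\[
u(x,s)\le u(y,t)\big(\tfrac ts\big)^{n/2}\exp\!\Big(C\tfrac{d(x,y)^2}{t-s}\Big)\qquad\text{whenever }d(x,y)^2\le t-s,
\]
for all positive solutions $u$. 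Taking $u(\tau,z)=P(\tau,z,x)$ this yields $P(2t,x,y)\ge c\,P(t,x,x)$ for all $y\in B(x,\sqrt t)$, with $c=c(n,\mu_{\max},\omega_{\min})$.

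Next I would establish volume doubling $\mathcal V(x,2r)\le C_D\,\mathcal V(x,r)$ from $CDE'(n,0)$ and the structural hypotheses, via a Bishop-Gromov-type comparison driven by the Harnack inequality above (comparing $\int_{B(x,r)}P(r^2,x,\cdot)\,\rd\mu$ at consecutive scales), as in~\cite{HLLY,GLLY}. The on-diagonal bound is then immediate: by Proposition~\ref{p2.1}(iv), $\sum_{y\in V}P(2t,x,y)\mu(y)\le 1$; restricting the sum to $y\in B(x,\sqrt t)$ and inserting $P(2t,x,y)\ge c\,P(t,x,x)$ gives $1\ge c\,P(t,x,x)\,\mathcal V(x,\sqrt t)$, hence $P(t,x,x)\le C'/\mathcal V(x,\sqrt t)$. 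For arbitrary $x,y$: if $d(x,y)\le\sqrt t$, combine Proposition~\ref{p2.1}(v) with Cauchy-Schwarz, $P(2t,x,y)\le\sqrt{P(2t,x,x)P(2t,y,y)}$, the on-diagonal bound, and $\mathcal V(x,\sqrt t)\asymp\mathcal V(y,\sqrt t)$ (a consequence of doubling when $d(x,y)\le\sqrt t$); if $d(x,y)>\sqrt t$, run Davies's exponential-weight method on top of the on-diagonal bound to get $P(t,x,y)\le\frac{C}{\sqrt{\mathcal V(x,\sqrt t)\mathcal V(y,\sqrt t)}}\exp\!\big(-c\,\zeta(d(x,y),t)\big)$ and absorb the volume mismatch $\mathcal V(x,\sqrt t)/\mathcal V(y,\sqrt t)\le C(1+d(x,y)/\sqrt t)^\nu$ (again doubling) into the exponential, arriving at $P(t,x,y)\le C^*/\mathcal V(x,\sqrt t)$.

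The hard part is the Li-Yau step in the unbounded setting: when $D_\mu=\infty$ one loses automatic stochastic completeness, $\ell^\infty$-contractivity of the semigroup, and the unrestricted validity of integration by parts and the maximum principle on $V$, so one must check that the completeness cutoffs genuinely suffice to carry through the maximum-principle computation and all limiting arguments --- this is precisely what Hua-Lin supply, and the reason completeness is the correct substitute for $D_\mu<\infty$. A secondary technicality is the off-diagonal decay in the discrete regime $d(x,y)>t$, where the kernel tail is of Poisson rather than Gaussian type; but since the asserted inequality retains only the $\mathcal V(x,\sqrt t)^{-1}$ factor, the precise form of $\zeta$ there is immaterial once volume doubling is in hand.
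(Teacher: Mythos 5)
First, note that the paper does not prove Proposition \ref{p2.4} at all: it is quoted verbatim from Gong--Lin--Liu--Yau \cite{GLLY}, so there is no in-paper proof to compare against. Your overall strategy (Li--Yau under $CDE'(n,0)$ with completeness cutoffs replacing $D_\mu<\infty$, then a Harnack inequality, then kernel bounds) is indeed the strategy of the cited source, and your first half is sound --- it is exactly what appears here as Proposition \ref{p2.7}. The trouble is in your second half. You lean on volume doubling ``from $CDE'(n,0)$ \dots as in \cite{HLLY,GLLY}'', but the doubling theorem of \cite{HLLY} is proved under the bounded-Laplacian hypothesis $D_\mu<\infty$, which is precisely the hypothesis you are trying to dispense with; in the unbounded setting doubling is not supplied by the sources you invoke, so as written that step is an unsupported appeal rather than a proof. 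The same goes for the Davies-method off-diagonal refinement: it is machinery aimed at a Gaussian factor the statement does not contain.

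Fortunately the detour is unnecessary, so the gap is removable rather than fatal. Once you have the Harnack inequality of Proposition \ref{p2.7}, apply it to the positive solution $u(\tau,z)=P(\tau,z,y)$ (which lies in $\ell^1(V)$ by Proposition \ref{p2.1}(iii)--(iv), so the hypothesis of Proposition \ref{p2.7} is met): for every $z\in B(x,\sqrt t)$,
\begin{equation*}
P(t,x,y)\;\le\;P(2t,z,y)\,2^{n}\exp\Bigl(4\,\tfrac{\mu_{\max}}{\omega_{\min}}\Bigr).
\end{equation*}
Multiplying by $\mu(z)$, summing over $z\in B(x,\sqrt t)$, and using $\sum_{z\in V}P(2t,z,y)\mu(z)=\sum_{z\in V}P(2t,y,z)\mu(z)\le 1$ (Proposition \ref{p2.1}(iii)--(iv)) gives $P(t,x,y)\,\mathcal V(x,\sqrt t)\le 2^{n}e^{4\mu_{\max}/\omega_{\min}}$ for \emph{all} $x,y\in V$ and $t>0$, which is the assertion with $C^*=2^{n}e^{4\mu_{\max}/\omega_{\min}}$. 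So: no on-/off-diagonal case split, no doubling, no Cauchy--Schwarz, no Davies weights. If you keep your version, you must either prove doubling for unbounded Laplacians under $CDE'(n,0)$ (a substantial task not covered by \cite{HLLY}) or restructure as above; also note that your chained Harnack statement carries a restriction $d(x,y)^2\le t-s$ that the actual inequality (Proposition \ref{p2.7}) does not require.
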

\subsection{Li-Yau inequality and Hanack inequality on graphs}

We introduce the following Li-Yau inequality on graphs.

\begin{prop}[\cite{BHLLMY}]
Suppose $G=(V, E, \omega, \mu)$ is a graph satisfying $CDE(n,0)$. Assume 
$$\mu_{\min}>0, \mu_{\max}<\infty, \omega_{\min}>0 \text{ and } D_{\mu}<\infty.$$ 
If $u$ is a positive solution to the heat equation on $G$, then
\begin{equation}
\frac{\Gamma(\sqrt{u})}{u}(t,x)-\frac{\partial_t\sqrt{u}}{\sqrt{u}}(t,x)\leq \frac{n}{2t}
\end{equation}
for all $t > 0$ and $x\in V$. 
\end{prop}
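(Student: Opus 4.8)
The plan is to run the discrete Li--Yau maximum-principle argument of \cite{BHLLMY}, with the $CDE(n,0)$ hypothesis playing the role of Bochner's identity. I describe it for $G$ finite, which already carries all the essential difficulties; for infinite $G$ one exhausts $V$ by finite subgraphs and passes to the limit, which is exactly where $D_\mu<\infty$ (boundedness of $\Delta$, smoothness of the heat flow in $t$) is used. The key preliminary reduction is a substitution. Put $v:=\sqrt u>0$. From the graph product rule $\Delta(g^2)=2g\,\Delta g+2\Gamma(g)$ and $\partial_t(v^2)=\partial_t u=\Delta u=\Delta(v^2)$ one gets $\partial_t v=\Delta v+\Gamma(v)/v$ on $V\times(0,\infty)$, while dividing $\Delta(v^2)=2v\Delta v+2\Gamma(v)$ by $2u$ gives
\[
\frac{\Gamma(\sqrt u)}{u}-\frac{\partial_t\sqrt u}{\sqrt u}=\frac{\Gamma(v)}{v^2}-\frac{\partial_t v}{v}=-\frac{\Delta v}{v}\ =:\ P(t,x).
\]
Thus it suffices to prove $P(t,x)\le n/(2t)$; replacing $u(t,\cdot)$ by $u(t+\varepsilon,\cdot)$, proving the bound with $t-\varepsilon$ in place of $t$, and letting $\varepsilon\downarrow0$, we may also assume $v(\cdot,0)>0$ is defined.

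Next, the maximum-principle setup. Fix $T>0$ and set $\phi(t,x):=tP(t,x)$ on $V\times[0,T]$, a continuous function with $\phi(0,\cdot)=0$; since $V$ is finite it attains its maximum at some $(t_0,x_0)$. Suppose, for contradiction, $\phi(t_0,x_0)>n/2$. Then $t_0>0$ and $P(t_0,x_0)>0$, hence $\Delta v(t_0,x_0)<0$, so $CDE(n,0)$ applies to the time-slice $f=v(t_0,\cdot)$ at $x_0$. At the maximum, $\partial_t\phi(t_0,x_0)\ge0$ (interior or right-endpoint maximum in $t$) and $\Delta_x\phi(t_0,x_0)\le0$ (because $\Delta g(x_0)=\mu(x_0)^{-1}\sum_{y\sim x_0}\omega_{x_0y}(g(y)-g(x_0))\le0$ at a spatial maximum), so
\[
0\ \le\ (\partial_t-\Delta)\phi(t_0,x_0)\ =\ P(t_0,x_0)+t_0\,(\partial_t-\Delta)P(t_0,x_0).
\]

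The heart of the argument is bounding $(\partial_t-\Delta)P$ at $(t_0,x_0)$. Using $\partial_t v=\Delta v+\Gamma(v)/v$, $\partial_t\Delta=\Delta\partial_t$, the product rule, and $\Gamma_2(v)=\tfrac12(\Delta\Gamma(v)-2\Gamma(v,\Delta v))$, a direct computation rewrites $(\partial_t-\Delta)P$ as a combination of $\Gamma_2(v)-\Gamma(v,\Gamma(v)/v)$ and pointwise terms in $v,\Delta v,\Gamma(v)$, plus further terms that play the role of the gradient term $2\langle\nabla\log u,\nabla(-\Delta\log u)\rangle$ in the Riemannian proof. Inserting the $CDE(n,0)$ bound
\[
\Gamma_2(v)(x_0)-\Gamma\!\Big(v,\tfrac{\Gamma(v)}{v}\Big)(x_0)\ \ge\ \tfrac1n\big[(\Delta v)(x_0)\big]^2=\tfrac1n\,P(t_0,x_0)^2\,v(t_0,x_0)^2,
\]
and using $\Delta v(t_0,x_0)<0$ together with $\Delta_x\phi(t_0,x_0)\le0$ to absorb the remaining terms, one arrives at an inequality of the form $(\partial_t-\Delta)P(t_0,x_0)\le-\tfrac2n P(t_0,x_0)^2$.

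Plugging this into the displayed inequality above gives $0\le P(t_0,x_0)-\tfrac{2t_0}{n}P(t_0,x_0)^2$. But $\phi(t_0,x_0)=t_0P(t_0,x_0)>n/2$ forces $P(t_0,x_0)>n/(2t_0)>0$, whence $\tfrac{2t_0}{n}P(t_0,x_0)^2>P(t_0,x_0)$ and the right-hand side is strictly negative --- a contradiction. Hence $\phi\le n/2$ on $V\times[0,T]$, and letting $T\to\infty$, $\varepsilon\downarrow0$ yields $\frac{\Gamma(\sqrt u)}{u}-\frac{\partial_t\sqrt u}{\sqrt u}\le\frac{n}{2t}$. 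I expect the main obstacle to be the evolution computation for $(\partial_t-\Delta)P$: because the chain rule fails on graphs, expanding $(\partial_t-\Delta)P$ generates several ``discrete second-difference'' terms with no Riemannian analogue, and one must check that precisely at a point where $\phi$ attains its spatial maximum (so $\Delta_x\phi\le0$) and where $\Delta v<0$ these terms are non-positive and the remainder reorganizes into $\Gamma_2(v)-\Gamma(v,\Gamma(v)/v)$ so that $CDE(n,0)$ can be invoked --- this bookkeeping is the technical core of \cite{BHLLMY}.
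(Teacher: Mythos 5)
The paper itself offers no proof of this proposition: it is quoted directly from \cite{BHLLMY} (Proposition 2.5 is a citation), so there is no in-paper argument to compare against, and your sketch must stand on its own as a reconstruction of the known proof. The correct parts of your outline are the reduction $\frac{\Gamma(\sqrt u)}{u}-\frac{\partial_t\sqrt u}{\sqrt u}=-\frac{\Delta\sqrt u}{\sqrt u}$ via the graph product rule, and the observation that at a maximum point with $\phi(t_0,x_0)>n/2$ one has $\Delta v(t_0,x_0)<0$, which is exactly what makes $CDE(n,0)$ applicable there.

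However, there are two genuine gaps. First, the decisive step --- the differential inequality $(\partial_t-\Delta)P(t_0,x_0)\le-\tfrac{2}{n}P(t_0,x_0)^2$ --- is asserted, not derived, and you yourself flag it as the technical core and leave it unverified. Moreover, the route you describe (compute $(\partial_t-\Delta)P$ directly and absorb the extra discrete terms using $\Delta_x\phi(t_0,x_0)\le 0$) is not how the argument in \cite{BHLLMY} closes: because the chain rule fails, one does not work with $P$ or $\phi$ alone but applies $\mathcal{L}=\Delta-\partial_t$ to a weighted quantity of the form $\sqrt{u}\,F$ (with $F=tP$), and uses the elementary maximum-point inequality $\Delta(gF)(x_0)\le F(x_0)\,\Delta g(x_0)$, valid for $g>0$ when $F$ is maximal at $x_0$; this positive-weight trick is precisely the device that replaces the missing chain rule, and without it the ``bookkeeping'' you defer does not reorganize into the $\Gamma_2-\Gamma(v,\Gamma(v)/v)$ term that $CDE(n,0)$ controls. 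Second, your passage from finite to infinite graphs by ``exhausting $V$ by finite subgraphs and passing to the limit'' would fail: a positive solution of the heat equation on $G$ does not restrict to a solution on a finite subgraph (the Laplacian changes at boundary vertices), and on an infinite graph the maximum of $\phi$ need not be attained. In \cite{BHLLMY} this is handled by proving a localized Li-Yau estimate on balls $B(x_0,2R)$ with cutoff functions --- this is where $D_\mu<\infty$, $\mu_{\max}<\infty$ and $\omega_{\min}>0$ enter quantitatively --- and then letting $R\to\infty$; some such localization is needed and is absent from your outline.
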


The following Hanack inequality on graphs is the direct consequence of the above Li-Yau inequality.
\begin{prop}[\cite{BHLLMY}]\label{p2.6}
 Let $G=(V, E, \omega, \mu)$ be a graph. Assume 
$$\mu_{\min}>0, \mu_{\max}<\infty, \omega_{\min}>0 \text{ and } D_{\mu}<\infty,$$ 
and $G$ satisfies $CDE(n,0)$.
    If $u$ is a positive solution to the heat equation on $G$, then
 \begin{equation}
 u(T_1,x)\leq u(T_2,y)\left(\frac{T_2}{T_1}\right)^{n}\exp\left(4\frac{\mu_{\max}}{\omega_{\min}}\frac{d^2(x,y)}{T_2 - T_1}\right)
    \end{equation}
 for all $x,y\in V$  and  any $T_1<T_2$.
\end{prop}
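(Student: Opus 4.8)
The plan is to derive the Harnack inequality from the Li--Yau gradient estimate of the preceding proposition by the classical Li--Yau integration-along-a-space-time-path argument, adapted to the combinatorial setting. Write $f=\log u$, which is smooth in $t$ and everywhere finite since $D_\mu<\infty$ makes $\Delta$ bounded and $u>0$. First I would record a pointwise differential inequality. For any edge $x\sim y$ one has $\Gamma(\sqrt u)(x)\ge\frac{\omega_{\min}}{2\mu_{\max}}(\sqrt{u(y)}-\sqrt{u(x)})^2$, and the elementary convexity inequality $(\sqrt r-1)^2\ge\frac14(\log r)^2$ for $r\ge1$ gives, when $u(t,y)\ge u(t,x)$, that $\Gamma(\sqrt u)(t,x)/u(t,x)\ge\frac{\omega_{\min}}{8\mu_{\max}}\bigl(f(t,y)-f(t,x)\bigr)^2$. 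Since $\partial_t\sqrt u/\sqrt u=\frac12\partial_t f$, the Li--Yau inequality $\Gamma(\sqrt u)/u-\frac12\partial_t f\le n/(2t)$ then yields, with $\lambda:=\frac{\omega_{\min}}{4\mu_{\max}}$ and $a_+:=\max\{a,0\}$,
$$\partial_t f(t,x)+\frac nt\ \ge\ \lambda\,\bigl(f(t,y)-f(t,x)\bigr)_+^{2}\ \ge\ 0\qquad\text{for every edge }x\sim y,\ t>0 ,$$
where the last inequality (equivalently $\partial_t f\ge-n/t$) uses only $\Gamma(\sqrt u)\ge0$; one may also sum the contributions of several neighbours of $x$ on the right-hand side.

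Next I would integrate this along a discretized space-time path. Fix $x,y\in V$, set $d:=d(x,y)$, pick a shortest path $x=x_0\sim x_1\sim\cdots\sim x_d=y$, and partition $[T_1,T_2]$ into $d+1$ subintervals $T_1=s_0<s_1<\cdots<s_{d+1}=T_2$ of comparable length $\asymp (T_2-T_1)/d$; follow the space-time path that sits at $x_{m-1}$ on $[s_{m-1},s_m)$ for $m=1,\dots,d$, jumps to $x_m$ at time $s_m$, and sits at $x_d$ on $[s_d,T_2]$. Telescoping gives
$$\log\frac{u(T_1,x)}{u(T_2,y)}=f(T_1,x)-f(T_2,y)=\sum_{m=1}^{d+1}\!\Bigl(-\!\int_{s_{m-1}}^{s_m}\!\!\partial_t f(t,x_{\min\{m-1,d\}})\,dt\Bigr)+\sum_{m=1}^{d}\bigl(f(s_m,x_{m-1})-f(s_m,x_m)\bigr).$$
Using $-\partial_t f(t,\cdot)\le n/t$ from Step 1, the time integrals contribute at most $n\log(T_2/T_1)$. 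It remains to control the $d$ spatial jumps: a jump at $s_m$ is harmless when $f(s_m,x_{m-1})\le f(s_m,x_m)$, and when $f(s_m,x_{m-1})>f(s_m,x_m)$ one pairs it with the immediately following slab $[s_m,s_{m+1}]$, on which the active vertex is $x_m$ and the differential inequality at $x_m$ with neighbour $x_{m-1}$ supplies the penalty term $\lambda\int_{s_m}^{s_{m+1}}\bigl(f(t,x_{m-1})-f(t,x_m)\bigr)_+^2\,dt$ inside the corresponding time integral.

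Finally I would absorb each unfavourable jump into its paired penalty integral by Young's inequality: by continuity of $t\mapsto f(t,\cdot)$ the difference $f(t,x_{m-1})-f(t,x_m)$ stays positive just after $s_m$, so the jump value is controlled by $\bigl(\lambda/(2)\bigr)\int(\cdots)_+^2+\tfrac1{2\lambda}(\cdots)$ over a time window of length $\asymp(T_2-T_1)/d$, costing each jump at most a constant multiple of $\frac{\mu_{\max}}{\omega_{\min}}\cdot\frac{d}{T_2-T_1}$; summing the $d$ jumps gives a contribution of order $\frac{\mu_{\max}}{\omega_{\min}}\cdot\frac{d^2}{T_2-T_1}$. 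Collecting the two contributions yields $\log\frac{u(T_1,x)}{u(T_2,y)}\le n\log\frac{T_2}{T_1}+4\frac{\mu_{\max}}{\omega_{\min}}\frac{d^2(x,y)}{T_2-T_1}$ after adjusting the absolute constants, which is the claim upon exponentiation. The main obstacle is exactly this last step: in Li--Yau's smooth argument space and time advance together along a geodesic so the gradient term integrates directly, whereas on a graph the spatial variable moves in unit jumps and one must pair each jump with a time-slab on which the relevant edge-difference does not instantaneously collapse — quantifying this requires the short-time regularity of $u$ in $t$ and a careful choice of the partition, together with handling the terminal slab by the $n/t$ bound alone. The remaining ingredients — the convexity inequality, the integration of $n/t$ to $n\log(T_2/T_1)$, and the uniform partition into $\asymp d$ pieces producing the $d^2/(T_2-T_1)$ factor — are routine.
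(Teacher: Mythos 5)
This proposition is not proved in the paper at all: it is quoted verbatim from \cite{BHLLMY}, so the only meaningful comparison is with the known proof there, whose overall strategy (chaining the Li--Yau estimate along a space--time path, applying the gradient term at the vertex carrying the \emph{smaller} value, and using the elementary inequality $\log r\le 2(\sqrt r-1)$) you have correctly identified. Your pointwise inequality $\partial_t f(t,x)+\frac nt\ \ge\ \lambda\bigl(f(t,y)-f(t,x)\bigr)_+^{2}$ with $\lambda=\frac{\omega_{\min}}{4\mu_{\max}}$ and the telescoping skeleton
$\log\frac{u(T_1,x)}{u(T_2,y)}\le n\log\frac{T_2}{T_1}+\sum_m\bigl[J_m-\lambda\int_{I_m}(f(t,x_{m-1})-f(t,x_m))_+^2\,dt\bigr]$
are both sound.

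The genuine gap is the absorption step, which you yourself flag as ``the main obstacle'' and then dispose of by ``Young's inequality'' plus continuity; as formulated it fails. With a partition fixed in advance, the jump $J_m=\bigl(f(s_m,x_{m-1})-f(s_m,x_m)\bigr)_+$ is a value at the single time $s_m$, while the available penalty is $\lambda\int_{s_m}^{s_{m+1}}\bigl(f(t,x_{m-1})-f(t,x_m)\bigr)_+^2dt$; mere continuity gives no quantitative lower bound on that integral in terms of $J_m$, since the edge difference may collapse arbitrarily fast after $s_m$ (and there is no a priori uniform upper bound on $\partial_t\log u$ --- only the lower bound $-n/t$ --- so ``short-time regularity'' cannot supply one). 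Consequently no inequality of the form $J_m\le \lambda\int_{I_m}(\cdot)_+^2+C\frac{\mu_{\max}}{\omega_{\min}}\frac{d}{T_2-T_1}$ holds for a prescribed $s_m$. The missing idea in \cite{BHLLMY} is to choose the intermediate time \emph{adaptively} inside each slab: writing $g(t)$ for the log-difference across the edge and $F(s)=\int_s^{T'}\bigl(e^{g/2}-1\bigr)_+^2dt$, the inequality $g\le 2\sqrt{-F'}$ and an ODE comparison for $F$ show there \emph{exists} $s$ in the slab with $g(s)\le \frac{\omega_{\min}}{\mu_{\max}}F(s)+\frac{4\mu_{\max}}{\omega_{\min}\tau}$ ($\tau$ the slab length); summing $d$ such steps of length $(T_2-T_1)/d$ gives exactly the constant $4\frac{\mu_{\max}}{\omega_{\min}}\frac{d^2(x,y)}{T_2-T_1}$. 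Note also that your argument, even if the absorption could be repaired ``up to absolute constants'', would not yield the stated inequality with the explicit factor $4\mu_{\max}/\omega_{\min}$.
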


The subsequent result provides a version of the Harnack inequality upon removing the  $D_{\mu}<\infty$  condition. Notably, in this scenario,  G  is strengthened to be a complete graph, and the curvature condition is elevated from  $CDE(n,0)$  to  $CDE'(n,0)$.

\begin{prop}[\cite{GLLY}]\label{p2.7}
Assume $G=(V,E,\omega,\mu)$ is a complete graph and $f(t,\cdot)\in\ell^{p}(V)$ with $p\in[1,\infty]$ for every $t\ge 0$. Suppose $\mu_{\min}>0,~ \mu_{\max}<\infty, ~\omega_{\min}>0,~$
$G$ satisfies $CDE^{'}(n,0)$ and $f$ is a positive solution to the heat equation on $G$.
Then for all $x,z\in V$ and any $t<s$, one has 
\begin{equation}
f(t,x)\leq f(s,z) \left(\frac{s}{t}\right)^{n}\exp\left(4\frac{\mu_{\max}}{\omega_{\min}}\frac{d^2(x,z)}{s- t}\right).
\end{equation}
\end{prop}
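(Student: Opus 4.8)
The plan is to derive this Harnack inequality by the classical Li--Yau ``chaining along a space-time path'' argument, adapted to the discrete and possibly unbounded-Laplacian setting. The one analytic input is a Li--Yau-type gradient estimate for positive solutions on a \emph{complete} graph under $CDE'(n,0)$: for every $0<t$ and $x\in V$,
$$\frac{\Gamma(\sqrt f)}{f}(t,x)-\frac{\partial_t\sqrt f}{\sqrt f}(t,x)\le\frac{n}{2t},\qquad\text{equivalently}\qquad \partial_t\log f(t,x)\ \ge\ \frac{2\,\Gamma(\sqrt f)(t,x)}{f(t,x)}-\frac{n}{t}.$$
This is the complete-graph counterpart of the Li--Yau inequality stated just before Proposition~\ref{p2.6}; the role of the hypotheses here is that $f(t,\cdot)\in\ell^p(V)$ forces $f(t,\cdot)\in\ell^\infty(V)$ by Lemma~\ref{l3.4}, and boundedness of $f$ together with completeness (the cutoffs $\{\eta_k\}$ with $\Gamma(\eta_k)\le 1/k$) is precisely what replaces $D_\mu<\infty$ in the maximum-principle argument behind the gradient estimate. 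I would either invoke this estimate from \cite{HL} or re-derive it by running the usual $CDE'$ maximum-principle computation localized against the exhaustion $\{\eta_k\}$ and letting $k\to\infty$.

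Next I would convert the gradient estimate into a one-edge, two-time comparison. Fix a directed edge $x\sim y$. Since
$$\frac{\Gamma(\sqrt f)(t,x)}{f(t,x)}=\frac1{2\mu(x)}\sum_{w\sim x}\omega_{xw}\Big(\sqrt{\tfrac{f(t,w)}{f(t,x)}}-1\Big)^2\ \ge\ \frac{\omega_{\min}}{2\mu_{\max}}\Big(\sqrt{\tfrac{f(t,y)}{f(t,x)}}-1\Big)^2,$$
the gradient estimate gives $\partial_t\log f(t,x)\ge \tfrac{\omega_{\min}}{\mu_{\max}}\big(\sqrt{f(t,y)/f(t,x)}-1\big)^2-\tfrac{n}{t}$. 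Integrating this over a subinterval $[\tau_1,\tau_2]\subset(0,\infty)$ and bounding $\log f(\tau_1,x)-\log f(\tau_2,y)$ by combining the spatial and temporal pieces with an elementary completing-the-square/Young optimization (exactly as in the continuous Li--Yau proof and its discrete forms in \cite{BHLLMY,HLLY}; this step is purely algebraic and needs neither completeness nor the $\ell^p$ bound) yields
$$\log f(\tau_1,x)-\log f(\tau_2,y)\ \le\ n\log\frac{\tau_2}{\tau_1}+\frac{4\mu_{\max}}{\omega_{\min}}\cdot\frac1{\tau_2-\tau_1}.$$

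Finally I would chain. If $x=z$, integrating $\partial_t\log f(t,x)\ge -n/t$ over $[t,s]$ already gives $f(t,x)\le f(s,x)(s/t)^n$. Otherwise set $d=d(x,z)\ge 1$, choose a shortest path $x=x_0\sim x_1\sim\cdots\sim x_d=z$, put $\tau_i=t+\tfrac{i}{d}(s-t)$ for $0\le i\le d$, and telescope:
$$\log f(t,x)-\log f(s,z)=\sum_{i=0}^{d-1}\Big(\log f(\tau_i,x_i)-\log f(\tau_{i+1},x_{i+1})\Big)\le\sum_{i=0}^{d-1}\Big(n\log\frac{\tau_{i+1}}{\tau_i}+\frac{4\mu_{\max}}{\omega_{\min}}\cdot\frac1{\tau_{i+1}-\tau_i}\Big).$$
The first sum telescopes to $n\log(s/t)$ and the second equals $\tfrac{4\mu_{\max}}{\omega_{\min}}\cdot\tfrac{d^2}{s-t}$ since $\tau_{i+1}-\tau_i=(s-t)/d$; exponentiating and recalling $d=d(x,z)$ gives the claimed inequality.

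The main obstacle is the first step: establishing the Li--Yau gradient estimate once $D_\mu<\infty$ is removed. In the bounded case it comes from a maximum principle applied to the bounded quantity $t\big(\Gamma(\sqrt f)/f-\partial_t\sqrt f/\sqrt f\big)$; without $D_\mu<\infty$ this quantity need not be bounded and cannot be localized by naive truncation, which is exactly why the graph must be complete and $f$ must lie in $\ell^p(V)$ for every $t$ — completeness supplies cutoffs with arbitrarily small $\Gamma$-energy, and the $\ell^p$ (hence $\ell^\infty$) control of $f$ makes the error terms in the localized maximum-principle argument vanish as $k\to\infty$. A secondary technical point is the discrete one-edge comparison: one cannot move continuously between adjacent vertices, so the per-step estimate must be read off the on-site quantity $\Gamma(\sqrt f)/f$, and a little care is needed so that the spatial ratio (naturally evaluated at the running time) and the temporal comparison are reconciled at the switching times $\tau_i$; this is precisely what the optimization producing the constant $4\mu_{\max}/\omega_{\min}$ accomplishes.
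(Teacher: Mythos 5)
The paper does not prove this proposition at all: it is imported verbatim from \cite{GLLY} (the unbounded-Laplacian Li--Yau/Harnack paper), so there is no internal argument to compare against, and your outline is exactly the route taken in that source — a Li--Yau gradient estimate under $CDE'(n,0)$ on a complete graph (with the completeness cutoffs $\{\eta_k\}$ and the $\ell^p$, hence $\ell^\infty$, control of $f$ standing in for $D_\mu<\infty$ in the localized maximum-principle argument), then a single-edge two-time Harnack step via the bound $\Gamma(\sqrt f)/f \ge \tfrac{\omega_{\min}}{2\mu_{\max}}\bigl(\sqrt{f(t,y)/f(t,x)}-1\bigr)^2$ and the optimization producing $4\mu_{\max}/\omega_{\min}$, and finally chaining along a geodesic with equal time increments to get the $d^2(x,z)/(s-t)$ term. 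Your proposal is sound, with the understood caveat that the genuinely hard ingredient (the Li--Yau estimate without $D_\mu<\infty$, which is the main theorem of \cite{GLLY} rather than of \cite{HL}) is invoked rather than reproved.
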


Let $\mathbb{Z}^{+}$ be the collection of all positive integers.

\subsection{Main results}

 We now state main results.
 \begin{theo}\label{l3.6}
Let $G=(V, E, \omega, \mu)$ be a graph. Suppose 
     $$\mu_{\min}>0, \mu_{\max}<\infty, \omega_{\min}>0 \text{ and } D_{\mu}<\infty.$$ 
     Let $r\in[1,+\infty]$. Assume $G$ satisfies $CDE(n,0)$ and the lower volume growth condition: 
     \begin{description}
         \item[\bf{(LVG$m$)}] There exist positive constants $c_3$ and $m$ so that $\mathcal{V}(x,R)\ge c_{3}R^{m}$ 
         for some $m>0$, all $x\in V$ and all $R>0$.
     \end{description}
 Then 
\begin{equation}\label{2.7}
\left\|P\left(t,x,\cdot \right)\right\|_{\ell^r(V)} \le C_4 t^{-\frac{m}{2}\left(1-\frac{1}{r}\right)}~\text{for}~x\in V,~t>0,
\end{equation}
     where $C_4=C_{4}(n, \mu_{max}, \omega_{\min},c_3,r,m)$ is a positive constant.
 \end{theo}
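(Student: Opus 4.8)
The plan is to interpolate between the sup-norm bound coming from the Gaussian upper estimate and the $\ell^1$ mass bound coming from Proposition~\ref{p2.1}(iv). First I would recall from Proposition~\ref{p2.3} that under the hypotheses ($CDE(n,0)$, $\mu_{\min}>0$, $\mu_{\max}<\infty$, $\omega_{\min}>0$, $D_\mu<\infty$) we have $P(t,x,y)\le C_1/\mathcal V(x,\sqrt t)$ for all $x,y\in V$ and $t>0$. Combining this with the lower volume growth condition \textbf{(LVG$m$)}, which gives $\mathcal V(x,\sqrt t)\ge c_3 t^{m/2}$, yields the pointwise bound
\begin{equation*}
0< P(t,x,y)\le \frac{C_1}{c_3}\,t^{-m/2}\qquad\text{for all }x,y\in V,\ t>0,
\end{equation*}
i.e. $\|P(t,x,\cdot)\|_{\ell^\infty(V)}\le (C_1/c_3)\,t^{-m/2}$. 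This handles the endpoint $r=\infty$ directly.

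Next I would treat the endpoint $r=1$: by Proposition~\ref{p2.1}(iv), $\|P(t,x,\cdot)\|_{\ell^1(V)}=\sum_{y\in V}P(t,x,y)\mu(y)\le 1$ for every $t>0$ and $x\in V$, which is exactly the claimed bound $C_4 t^{-\frac m2(1-\frac1r)}$ at $r=1$ (the exponent vanishes), with constant $1$.

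For general $r\in(1,\infty)$ I would use the elementary interpolation inequality for $\ell^p$ norms of a \emph{nonnegative} sequence: for any $h\ge 0$,
\begin{equation*}
\|h\|_{\ell^r(V)}=\Big(\sum_{y}\mu(y)\,h(y)^{r}\Big)^{1/r}
\le \|h\|_{\ell^\infty(V)}^{1-1/r}\,\|h\|_{\ell^1(V)}^{1/r},
\end{equation*}
which follows from $h(y)^r=h(y)^{r-1}h(y)\le \|h\|_{\ell^\infty}^{r-1}h(y)$ and summing against $\mu$. Applying this with $h(y)=P(t,x,y)$ and inserting the two endpoint bounds gives
\begin{equation*}
\|P(t,x,\cdot)\|_{\ell^r(V)}\le \Big(\tfrac{C_1}{c_3}\,t^{-m/2}\Big)^{1-1/r}\cdot 1^{1/r}
=\Big(\tfrac{C_1}{c_3}\Big)^{1-1/r} t^{-\frac m2(1-\frac1r)},
\end{equation*}
so the conclusion holds with $C_4:=\max\{1,(C_1/c_3)^{1-1/r}\}$, which depends only on $n,\mu_{\max},\omega_{\min},c_3,r,m$ (the dependence on $n,\mu_{\max},\omega_{\min}$ entering through $C_1$ from Proposition~\ref{p2.3}), and the argument is uniform in $x\in V$ and $t>0$.

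There is no serious obstacle here; the only point requiring a little care is making sure the constant $C_4$ genuinely has the claimed parameter dependence — in particular that $C_1$ in Proposition~\ref{p2.3} does not secretly depend on the base point $x$ or on $t$ — and that the interpolation step is legitimate even when $P(t,x,\cdot)\notin\ell^1$ is not an issue since Proposition~\ref{p2.1}(iv) guarantees it is. If one prefers to avoid the abstract interpolation lemma, the same estimate can be obtained by the direct chain $\sum_y\mu(y)P(t,x,y)^r\le \big(\sup_y P(t,x,y)\big)^{r-1}\sum_y\mu(y)P(t,x,y)$, which is how I would actually write it.
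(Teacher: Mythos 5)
Your proof is correct, but it takes a genuinely more direct route than the paper. The paper, after establishing the same $\ell^\infty$ bound $\|P(t,x,\cdot)\|_{\ell^\infty(V)}\le (C_1/c_3)t^{-m/2}$ from Proposition \ref{p2.3} and \textbf{(LVG$m$)}, handles $r\in[1,\infty)$ by invoking the Harnack inequality of Proposition \ref{p2.6}: it writes $P(t,y,x)\le P(2t,z,x)\,2^{n}\exp\bigl(4d^2(y,z)D/t\bigr)$ for $z\in B(y,\sqrt t)$, factors $P^r=P\cdot P^{r-1}$, bounds $P^{r-1}(2t,z,x)$ by the sup estimate, and finally sums $P(2t,z,x)$ over $x$ using Proposition \ref{p2.1}(iv); this produces the same rate with a constant of the form $2^{-\frac m2(r-1)}\tilde C_4^{r-1}2^{nr}e^{4rD}$. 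You bypass the Harnack step entirely by interpolating between the endpoints: $\sum_y\mu(y)P(t,x,y)^r\le\bigl(\sup_yP(t,x,y)\bigr)^{r-1}\sum_y\mu(y)P(t,x,y)$, using the stochastic mass bound of Proposition \ref{p2.1}(iv) and the uniform-in-$(x,y)$ sup bound. This is legitimate precisely because Proposition \ref{p2.3} gives the off-diagonal upper bound for \emph{all} pairs $x,y$ and \textbf{(LVG$m$)} holds at every vertex, so the shift from $(t,y)$ to $(2t,z)$ that the Harnack inequality provides buys nothing here; it would only be needed if the volume lower bound or the kernel upper bound were available only at a distinguished base point or on the diagonal. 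Your argument yields the same decay exponent with a cleaner constant (and recovers the optimal constant $1$ at $r=1$, where the paper's constant $C_5^{1/r}$ is worse), and the claimed parameter dependence of $C_4$ through $C_1=C_1(n,\mu_{\max},\omega_{\min})$ and $c_3$ is as stated.
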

 
\begin{theo}\label{t2.1.}
Let $G=(V, E, \omega, \mu)$ be a graph and $1\le b, a, r \le \infty$ satisfy the relation:
    \begin{equation}\label{3.4}
        1+\frac{1}{b}=\frac{1}{a}+\frac{1}{r}.
    \end{equation} 
Assume 
$$g\in \ell^{a}(V),~ \mu_{\min}>0,~ \mu_{\max}<\infty,~ \omega_{\min}>0,~ D_{\mu}<\infty,$$
and $G$ satisfies $CDE(n,0)$ and {\bf{(LVG$m$)}}.
Then  
    \begin{equation}\label{2.9}
        \left\|\sum_{y\in V}P\left(t,\cdot,y\right)g(y)\mu(y)\right\|_{\ell^b(V)}\le \hat{C}_4\|g\|_{\ell^a(V)}t^{-\frac{m}{2}\left(1-\frac{1}{r}\right)}
    \end{equation} 
    for all $t> 0$ and $x\in V$, where 
    \bess
   \hat{C}_4 =\begin{cases}
       C_4,~&b\in(1,+\infty],\\
       1,~&b=1.
    \end{cases}\eess
\end{theo}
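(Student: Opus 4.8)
The plan is to deduce Theorem \ref{t2.1.} from Theorem \ref{l3.6} together with a discrete version of Young's convolution inequality applied to the ``convolution-type'' operator $g \mapsto \sum_{y\in V} P(t,\cdot,y) g(y)\mu(y)$. First I would observe that, by Proposition \ref{p2.1}(iii), this operator has a symmetric kernel $P(t,x,y)$, and by Theorem \ref{l3.6} we control the $\ell^r(V)$ norm of $y \mapsto P(t,x,y)$ uniformly in $x$: $\|P(t,x,\cdot)\|_{\ell^r(V)} \le C_4 t^{-\frac m2(1-1/r)}$. The exponent relation \eqref{3.4}, namely $1+\tfrac1b = \tfrac1a + \tfrac1r$, is exactly the Young exponent condition, so the statement is morally ``$\|K \ast g\|_{\ell^b} \le \|K\|_{\ell^r}\|g\|_{\ell^a}$'' with $K(x,y)=P(t,x,y)\mu(y)$ relative to counting measure, or equivalently against the measure $\mu$ after absorbing the weights correctly.

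The main work is to prove the weighted Young inequality on $(V,\mu)$ in the form needed here. I would split into cases according to $b$. For $b=\infty$ (hence $1/a+1/r=1$, i.e. $a,r$ conjugate), apply Hölder's inequality directly: $\big|\sum_y P(t,x,y)g(y)\mu(y)\big| \le \|P(t,x,\cdot)\|_{\ell^{r}(V)}\|g\|_{\ell^{a}(V)}$, and take the supremum over $x$; this gives the bound with constant $C_4$. For $b=1$ (hence $a=r=1$ after checking \eqref{3.4} forces $1/a=1/b=1$ and $1/r=1$... actually $1+1=1/a+1/r$ forces $a=r=1$), one uses Tonelli to interchange the sums, $\sum_x \big|\sum_y P(t,x,y)g(y)\mu(y)\big|\mu(x) \le \sum_y |g(y)|\mu(y)\sum_x P(t,x,y)\mu(x) \le \sum_y|g(y)|\mu(y) = \|g\|_{\ell^1(V)}$, invoking Proposition \ref{p2.1}(iii)–(iv) to get $\sum_x P(t,x,y)\mu(x) = \sum_x P(t,y,x)\mu(x)\le 1$; this explains the constant $1$ in that case. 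For the generic range $b\in(1,\infty)$, I would run the standard three-line/interpolation argument: write $P(t,x,y)\mu(y) = \big(P^{r}\mu\big)^{\theta_1}\cdot\big(P\mu\big)^{\theta_2}\cdot\big(|g|^{a}\mu\big)^{\theta_3}\cdot(\text{leftover})$ with exponents chosen so that the three Hölder factors land in $\ell^{b}$, $\ell^{\infty}$ (via Proposition \ref{p2.1}(iv)), and turn the $g$-part into $\|g\|_{\ell^a}$; the bookkeeping of exponents is dictated precisely by \eqref{3.4}. Then raise to the power $b$, sum in $x$, use Tonelli once more, and extract $\|P(t,x,\cdot)\|_{\ell^r(V)}$ uniformly in $x$ from Theorem \ref{l3.6}.

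Concretely, for $1<b<\infty$ I would fix $x$ and decompose
\[
\sum_{y} P(t,x,y)|g(y)|\mu(y) = \sum_{y} \Big(P(t,x,y)^{\frac{r}{b}}\Big)\Big(P(t,x,y)^{1-\frac{r}{b}}|g(y)|^{\frac{a}{b'}}\Big)|g(y)|^{1-\frac{a}{b'}}\mu(y),
\]
and apply Hölder with three exponents $b$, and two others summing appropriately; matching powers forces exactly $1+\tfrac1b=\tfrac1a+\tfrac1r$ and, after using $\sum_y P(t,x,y)\mu(y)\le 1$ and $\|g\|_{\ell^a}$, yields $\big(\sum_y P(t,x,y)|g(y)|\mu(y)\big)^b \le \|P(t,x,\cdot)\|_{\ell^r(V)}^{r}\,\|g\|_{\ell^a(V)}^{a(b-1)/ \,\cdots}\sum_y P(t,x,y)|g(y)|^a\mu(y)$ up to the correct exponents; summing in $x$ with Tonelli and bounding $\sum_x P(t,x,y)\mu(x)\le 1$ then collapses the last factor to $\|g\|_{\ell^a(V)}^a$, and Theorem \ref{l3.6} supplies the $t$-decay $t^{-\frac m2(1-1/r)}$ with constant $C_4$.

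The step I expect to be the main obstacle is getting the exponent bookkeeping in the $1<b<\infty$ case exactly right — in particular verifying that the Hölder split is legitimate at the endpoint sub-cases ($a=1$, $r=1$, $a=\infty$, or $r=\infty$ within this range) and that the combination of Proposition \ref{p2.1}(iv) (the sub-Markov property $\sum_y P(t,x,y)\mu(y)\le 1$) with the symmetry is applied to the correct variable each time. Everything else (Hölder, Tonelli, invoking Theorem \ref{l3.6} uniformly in $x$, and the trivial $b=1,\infty$ endpoints) is routine. A clean alternative that sidesteps some of this is to prove the operator is bounded $\ell^1\to\ell^b$ with norm $\le \|P(t,x,\cdot)\|_{\ell^b}$ (by Proposition \ref{p2.1}(iv) and Minkowski's integral inequality) and $\ell^{r'}\to\ell^\infty$ with norm $\le\|P(t,x,\cdot)\|_{\ell^r}$ (by Hölder), then interpolate via Riesz–Thorin; I would mention this as the conceptually cleanest route but carry out the direct Hölder computation for self-containedness.
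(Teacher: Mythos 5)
Your proposal follows essentially the same route as the paper: Hölder for $b=\infty$, the $b=1$ endpoint with constant $1$ (the paper invokes its Lemma \ref{l3.9}, while your Tonelli-plus-symmetry argument gives the same contraction), and for $1<b<\infty$ a Hölder splitting of $P(t,x,y)|g(y)|$, raising to the power $b$, summing in $x$, Tonelli, and Theorem \ref{l3.6}, with Riesz--Thorin noted as an alternative just as in the paper's remark. The one bookkeeping correction: in the correct split the factor surviving the $x$-summation is $\sum_{y} P^{r}(t,x,y)|g(y)|^{a}\mu(y)$, so after Tonelli you must bound $\sum_{x}P^{r}(t,x,y)\mu(x)=\left\|P(t,y,\cdot)\right\|_{\ell^{r}(V)}^{r}$ by Theorem \ref{l3.6} again (via the symmetry $P(t,x,y)=P(t,y,x)$), not by the sub-Markov bound $\sum_{x}P(t,x,y)\mu(x)\le 1$, which suffices only when $r=1$; this is exactly how the paper closes the estimate.
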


\begin{theo}\label{t2.3}
Let $G=(V, E, \omega, \mu)$ be a complete graph and $1\le b, a, r \le \infty$ satisfy the relation \eqref{3.4}.  
Assume 
$$g\in \ell^{a}(V),~ \mu_{\min}>0,~ \mu_{\max}<\infty,~ \omega_{\min}>0$$
and $G$ satisfies $CDE^{'}(n,0)$ and {\bf{(LVG$m$)}}.
Then there exists constant $\tilde{C}_{4}>0$ depending on $n, \mu_{max}, \omega_{\min},c_3,r,m$ so that 
\begin{equation*}
    \left\|P\left(t,x,\cdot \right)\right\|_{\ell^r(V)} \le \tilde{C}_4 t^{-\frac{m}{2}\left(1-\frac{1}{r}\right)}
\end{equation*}
and
\begin{equation*}
    \left\|\sum_{y\in V}P\left(t,\cdot,y\right)g(y)\mu(y)\right\|_{\ell^b(V)}\le \tilde{C}_4\|g\|_{\ell^a(V)}t^{-\frac{m}{2}\left(1-\frac{1}{r}\right)}
\end{equation*} 
for all $t> 0$ and $x\in V$.
\end{theo}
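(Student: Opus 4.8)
The plan is to reduce Theorem \ref{t2.3} to the bounded-Laplacian case (Theorems \ref{l3.6} and \ref{t2.1.}) by replacing the only two ingredients that used $D_\mu<\infty$, namely the Gaussian upper bound Proposition \ref{p2.3} and the Harnack inequality Proposition \ref{p2.6}. Under the hypotheses of Theorem \ref{t2.3} --- $G$ complete, $CDE'(n,0)$, and $\mu_{\min}>0$, $\mu_{\max}<\infty$, $\omega_{\min}>0$ --- Proposition \ref{p2.4} gives $P(t,x,y)\le C^*/\mathcal V(x,\sqrt t)$, and Proposition \ref{p2.7} (applied to the positive heat-equation solution $f(s,z)=P(s,z,y)$, which lies in every $\ell^p(V)$ by Proposition \ref{p2.1}(iv) and Lemma \ref{l3.5}) gives the corresponding parabolic Harnack inequality. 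These are precisely the substitutes needed, so the proof of Theorem \ref{l3.6} goes through verbatim with $C_1$ replaced by $C^*$, yielding $\|P(t,x,\cdot)\|_{\ell^r(V)}\le \tilde C_4\, t^{-\frac m2(1-\frac1r)}$ for all $x\in V$, $t>0$.

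Concretely, for the first estimate I would combine the on-diagonal bound with the volume lower bound {\bf(LVG$m$)}: from Proposition \ref{p2.4} and {\bf(LVG$m$)}, $P(t,x,y)\le C^*/(c_3 t^{m/2})$. Writing $\|P(t,x,\cdot)\|_{\ell^r(V)}^r=\sum_y \mu(y)P(t,x,y)^r\le \big(\sup_y P(t,x,y)\big)^{r-1}\sum_y \mu(y)P(t,x,y)$ for $r\in[1,\infty)$, and using Proposition \ref{p2.1}(iv) for the last sum, we get $\|P(t,x,\cdot)\|_{\ell^r(V)}^r\le (C^*/(c_3 t^{m/2}))^{r-1}$, i.e. the claimed bound with exponent $\frac m2(1-\frac1r)$; the case $r=\infty$ is immediate from Proposition \ref{p2.4} and {\bf(LVG$m$)}. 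Strictly speaking the Harnack inequality is only needed if one wants the sharper form of Theorem \ref{l3.6} or to handle the endpoint uniformity in $x$; since {\bf(LVG$m$)} is uniform in $x$ here, the Gaussian upper bound alone suffices, which is a minor simplification over the general argument.

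For the second estimate I would invoke the generalized Young inequality for the ``convolution'' kernel $P(t,\cdot,\cdot)$ on $(V,\mu)$. Given $1\le a,b,r\le\infty$ with $1+\frac1b=\frac1a+\frac1r$, Young's inequality on the measure space $(V,\mu)$ gives
\begin{equation*}
\Big\|\sum_{y\in V}P(t,\cdot,y)g(y)\mu(y)\Big\|_{\ell^b(V)}\le \Big(\sup_{x}\|P(t,x,\cdot)\|_{\ell^r(V)}\Big)^{?}\cdots\|g\|_{\ell^a(V)},
\end{equation*}
and by symmetry $P(t,x,y)=P(t,y,x)$ (Proposition \ref{p2.1}(iii)) the two one-sided kernel norms coincide, so the first estimate feeds directly into Young to produce $\tilde C_4\|g\|_{\ell^a(V)}t^{-\frac m2(1-\frac1r)}$; when $b=1$ one uses $r=1$, $a=1$ together with Proposition \ref{p2.1}(iv) and Fubini (Lemma \ref{t2.1}) to get constant $1$, exactly as in Theorem \ref{t2.1.}. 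This is identical to the proof of Theorem \ref{t2.1.} once the first estimate is in hand.

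The only real subtlety --- and what I expect to be the main obstacle --- is justifying that the manipulations are legitimate without $D_\mu<\infty$: in particular that $P(s,z,\cdot)$ is indeed a genuine positive solution of the heat equation in $\ell^p$ so that Proposition \ref{p2.7} applies, and that the interchange of summation (Fubini) in the Young-inequality step is valid for an unbounded Laplacian. For the former, completeness of $G$ guarantees the heat semigroup is well-behaved and $P(s,z,\cdot)\in\ell^1(V)$ by Proposition \ref{p2.1}(iv), hence in all $\ell^p(V)$ by Lemma \ref{l3.5}, so $f(s,z)=P(s,z,y)$ qualifies as the positive solution in Proposition \ref{p2.7}; for the latter, nonnegativity of $P$ and $g\ge0$ (or splitting $g=g_+-g_-$) makes Tonelli applicable, so no absolute-convergence issue arises. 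Once these points are checked, every remaining step is a verbatim transcription of the proofs of Theorems \ref{l3.6} and \ref{t2.1.} with $(\text{Prop.\ }\ref{p2.3},\text{Prop.\ }\ref{p2.6})$ replaced by $(\text{Prop.\ }\ref{p2.4},\text{Prop.\ }\ref{p2.7})$.
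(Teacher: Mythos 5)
Your proposal is correct and takes essentially the same route as the paper, whose proof of Theorem \ref{t2.3} consists precisely in rerunning the proofs of Theorems \ref{l3.6} and \ref{t2.1.} with Propositions \ref{p2.4} and \ref{p2.7} substituted for Propositions \ref{p2.3} and \ref{p2.6}. Your additional observation that, since \textbf{(LVG$m$)} is uniform in $x$, the Harnack step can be bypassed via $\sum_{y}\mu(y)P(t,x,y)^r\le\bigl(\sup_{y}P(t,x,y)\bigr)^{r-1}\sum_{y}\mu(y)P(t,x,y)$ together with Proposition \ref{p2.1}(iv) is a valid minor simplification, and your checks that Proposition \ref{p2.7} applies to $P(s,\cdot,y)\in\ell^1(V)\subset\ell^p(V)$ and that Tonelli justifies the Young-type step are exactly the points needed in the unbounded-Laplacian setting.
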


\begin{definition}
Let $T>0$. Two functions $u=u(t, x), v=v(t, x)$ in $[0, T] \times V$ is called a solution of \eqref{1.4.} in $[0, T]$ if  $(u,v)$  satisfies

1) $u(t,x),v(t,x)\in C\left( [0,T]; \ell^{\infty}(V)\right)$;

2) $u_t=\Delta u+v^p,~ v_t=\Delta v+u^q \text{ for every } t \in[0, T] \text{ and } x \in V.$
	
\end{definition}

\begin{definition}
Assume $(u,v)$ is a solution to \eqref{1.4.} in $[0,T)$. If $u^{2}(\tilde{t},\tilde{x})+v^{2}(\tilde{t},\tilde{x})\not=0$ for some $\tilde{t}\in(0,T)$ and $\tilde{x}\in V$, then we call $(u,v)$ is a nontrivial solution to \eqref{1.4.}.
\end{definition}

\begin{theo}\label{t2.2}
	Assume $p,q> 0$, $pq>1$ and $D_{\mu}<\infty$. For any given $u_0$ and $v_0$ satisfying $u_0(x)$, $v_0(x)$ are nonnegative and bounded on $V$, there exists $T$ with $0<T<+\infty$ so that \eqref{1.4.} admits a nonnegative solution $(u,v)$ for $t\in [0,T]$.
\end{theo}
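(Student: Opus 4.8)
The plan is to prove local existence via the Banach fixed-point theorem applied to the integral (mild) formulation of \eqref{1.4.}. Writing $e^{t\Delta}$ for the heat semigroup $\big(e^{t\Delta}f\big)(x):=\sum_{y\in V}P(t,x,y)f(y)\mu(y)$, which is well-defined and a contraction on $\ell^\infty(V)$ by Proposition \ref{p2.1}(ii),(iv), a solution of \eqref{1.4.} in $C([0,T];\ell^\infty(V))$ is a fixed point of the map
\begin{equation*}
\Phi(u,v)(t)=\left(e^{t\Delta}u_0+\int_0^t e^{(t-s)\Delta}v(s)^p\,\rd s,\ \ e^{t\Delta}v_0+\int_0^t e^{(t-s)\Delta}u(s)^q\,\rd s\right).
\end{equation*}
One should first justify that this mild formulation is equivalent to the classical pointwise formulation in Definition 2.5: since $D_\mu<\infty$, $\Delta$ is a bounded operator on $\ell^\infty(V)$ by the cited result of \cite{KL}, so Proposition \ref{p2.2} gives $e^{t\Delta}=\sum_k t^k\Delta^k/k!$ with the usual semigroup calculus, and Duhamel's principle holds with all manipulations justified by Lemma \ref{t2.1} (Bochner integrability, since $s\mapsto e^{(t-s)\Delta}v(s)^p$ is continuous with values in $\ell^\infty(V)$).

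The second step is the contraction estimate. Fix $M>\|u_0\|_{\ell^\infty(V)}+\|v_0\|_{\ell^\infty(V)}+1$ and work in the closed ball
$$\mathcal{B}_{T,M}=\{(u,v)\in C([0,T];\ell^\infty(V))^2:\ \|u\|_{C([0,T];\ell^\infty)}\le M,\ \|v\|_{C([0,T];\ell^\infty)}\le M\},$$
with the metric induced by the sum of sup-norms. Because $\|e^{\tau\Delta}f\|_{\ell^\infty}\le\|f\|_{\ell^\infty}$ for all $\tau\ge0$, one gets
$$\|\Phi(u,v)_1(t)\|_{\ell^\infty}\le\|u_0\|_{\ell^\infty}+\int_0^t\|v(s)^p\|_{\ell^\infty}\,\rd s\le\|u_0\|_{\ell^\infty}+T M^p,$$
and similarly for the second component with $M^q$; choosing $T$ small enough (depending on $M,p,q$) keeps $\Phi$ mapping $\mathcal{B}_{T,M}$ into itself. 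For the contraction, the key elementary inequality is that $a\mapsto a^p$ is locally Lipschitz on $[0,M]$: $|a^p-b^p|\le p\max\{M,1\}^{p-1}|a-b|$ for $a,b\in[0,M]$ (with the obvious modification when $0<p<1$, where $a^p$ is Hölder but still $|a^p-b^p|\le C_{p}|a-b|^{\min\{p,1\}}$ — here one should be slightly careful and may prefer to note that on $[0,M]$ with $M$ fixed, $a\mapsto a^p$ is Lipschitz away from $0$, but near $0$ for $0<p<1$ it is only Hölder, so the contraction argument needs the exponent $\min\{p,1\}$ and a correspondingly adjusted power of $T$; alternatively impose no lower bound and use that $|a^p - b^p| \le p M^{p-1}|a-b|$ holds for $p \ge 1$ and a Hölder-type bound for $p<1$). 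Applying $\|e^{(t-s)\Delta}(\cdot)\|_{\ell^\infty}\le\|\cdot\|_{\ell^\infty}$ pointwise in $s$ gives
$$\|\Phi(u,v)-\Phi(\bar u,\bar v)\|_{C([0,T];\ell^\infty)^2}\le C(M,p,q)\,T\,\|(u,v)-(\bar u,\bar v)\|_{C([0,T];\ell^\infty)^2}$$
(or with $T$ replaced by $T^{\min\{p,1\}}$ etc.), so for $T$ small $\Phi$ is a contraction and has a unique fixed point in $\mathcal{B}_{T,M}$.

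The third and final step is nonnegativity of the fixed point: since $P(t,x,y)>0$ (Proposition \ref{p2.1}(ii)) and $u_0,v_0\ge0$, the map $\Phi$ preserves the cone $\{(u,v):u\ge0,\ v\ge0\}$ (each term in each component is a positive kernel applied to a nonnegative function — note $v^p,u^q\ge0$ when $u,v\ge0$), which is closed in $C([0,T];\ell^\infty(V))^2$; hence the iterates $\Phi^k(0,0)$ stay nonnegative and their limit, the fixed point, is nonnegative. This yields a nonnegative solution on $[0,T]$. I expect the main obstacle to be purely bookkeeping: (a) handling the case $0<p<1$ or $0<q<1$ in the Lipschitz/Hölder estimate — the fractional power is not Lipschitz at the origin, so one must either restrict to exponents $\ge1$ or track the Hölder constant and the resulting $T^{\min\{p,1\}}$ smallness condition carefully; and (b) making the equivalence of the mild and classical formulations fully rigorous, i.e. differentiating $t\mapsto\int_0^t e^{(t-s)\Delta}v(s)^p\,\rd s$ in $\ell^\infty(V)$ and verifying it solves the pointwise equation, which uses boundedness of $\Delta$ on $\ell^\infty(V)$ together with continuity of $s\mapsto v(s)^p$ and Lemma \ref{t2.1}. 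Neither is deep, but both require care to state cleanly.
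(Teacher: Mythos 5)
Your contraction argument is essentially the paper's Step 1, and it is fine as long as $p,q\ge 1$: the mild formulation, the ball $\mathcal{B}_{T,M}$, the bound $\|e^{\tau\Delta}f\|_{\ell^\infty}\le\|f\|_{\ell^\infty}$ from Proposition \ref{p2.1}(iv), the Lipschitz estimate $|a^p-b^p|\le pM^{p-1}|a-b|$ on $[0,M]$, and the preservation of the nonnegative cone all appear in the paper in the same form, and the equivalence of the mild and pointwise formulations is exactly the content of Lemma \ref{l3.12} plus Proposition \ref{p2.2}.

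The genuine gap is the case $\min\{p,q\}\in(0,1)$, which the hypotheses $p,q>0$, $pq>1$ explicitly allow (e.g.\ $p=1/2$, $q=3$). Your proposed fix --- replace the Lipschitz bound by a H\"older bound $|a^p-b^p|\le C_p|a-b|^{p}$ and ``adjust the power of $T$'' --- does not work: it yields
$\|\Phi(u,v)-\Phi(\bar u,\bar v)\|\le C\,T\,\|(u,v)-(\bar u,\bar v)\|^{p}$ with $p<1$, which is not a contraction estimate (for small distances $d$, $d^{p}\gg d$, no matter how small $T$ is), and no smallness of $T$ repairs this. Indeed no contraction-based argument can work directly here, since for such nonlinearities uniqueness genuinely fails (compare $u'=u^{p}$, $u(0)=0$), so a map with these nonlinearities cannot be a strict contraction near the zero data. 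The paper's proof devotes its entire Step 2 to this case: it replaces $x^{p}$ by nondecreasing, globally Lipschitz regularizations $g_n$ with $g_n(x)=x^{p}$ for $x\ge \tfrac{1}{2n}$, shifts the initial datum to $v_0+\tfrac1n$ so that the approximate solution satisfies $v_n\ge\tfrac{1}{2n}$ and thus solves the original integral system, invokes the comparison principle of Lemma \ref{l3.10} (proved precisely for nondecreasing globally Lipschitz $g$) to get monotonicity of $(u_n,v_n)$ in $n$, and then passes to the limit using uniform bounds, equicontinuity in $C([0,T];\ell^\infty(V))$, and dominated convergence. Without some such regularization-and-compactness (or monotonicity) device, your argument only proves the theorem under the extra restriction $p,q\ge1$, which is strictly weaker than the statement.
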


\begin{theo}\label{t2.5.}
    For any $m\in(0,+\infty)$. Suppose $G=(V,E,\omega,\mu)$ is a graph satisfying CDE$(n,0)$ and the condition {\bf (LVG$m$)}.
    Assume
     $$ \mu_{\min}>0,~ \mu_{\max}<\infty,~ \omega_{\min}>0~\text{ and }~ D_{\mu}<\infty~ .$$
Let $p,q\ge 1$, $pq>1$,  $T>0$ and $r_2=\frac{m}{2}\left(\frac{pq-1}{\max\{p,q\}+1}\right)$. 
Assume $$\frac{\max\{p,q\}+1}{pq-1}<\frac{m}{2},$$ and $$\left\|u_0\right\|_{\ell^{r_{2}}(V)}+\left\|v_0\right\|^{p}_{\ell^{r_2}(V)}\le\hat{C}_{1}$$ for some $\hat{C}_{1}>0$.  If $(u,v)$ is a solution to \eqref{1.4.} for $t\in[0,T]$, then $(u,v)$ is a global solution to \eqref{1.4.}.
\end{theo}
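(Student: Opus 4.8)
\emph{Proof strategy.} The plan is to run a small-data bootstrap on the mild (Duhamel) formulation of \eqref{1.4.}, using the decay estimates of Theorem~\ref{t2.1.} and the embeddings of Lemmas~\ref{l3.4}--\ref{l3.5}, and then to upgrade the resulting $T$-independent a priori bound to global existence through the continuation criterion supplied by the local theory (Theorem~\ref{t2.2}). Write $S(t)f(x):=\sum_{y\in V}P(t,x,y)f(y)\mu(y)$. Since $D_\mu<\infty$, $\Delta$ is bounded on $\ell^\infty(V)$, so $\{S(t)\}_{t\ge0}$ is a norm-continuous semigroup there and, by Proposition~\ref{p2.2}, any solution $(u,v)$ of \eqref{1.4.} on $[0,T]$ satisfies
\begin{equation*}
u(t)=S(t)u_0+\int_0^tS(t-s)v^p(s)\,ds,\qquad v(t)=S(t)v_0+\int_0^tS(t-s)u^q(s)\,ds .
\end{equation*}
By Proposition~\ref{p2.1}(ii) each $S(t)$ is positivity preserving, so $u,v\ge0$ and $v^p,u^q$ are well defined pointwise even for non-integer exponents; by Proposition~\ref{p2.1}(iv), $S(t)$ is an $\ell^1$--$\ell^\infty$ contraction and hence a contraction on every $\ell^a(V)$.

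Let $[0,T^\ast)$ be the maximal existence interval obtained by iterating Theorem~\ref{t2.2}, whose local existence time depends only on $\|u_0\|_{\ell^\infty}+\|v_0\|_{\ell^\infty}$; fix $T<T^\ast$ and set $\sigma:=\tfrac m{2r_2}=\tfrac{\max\{p,q\}+1}{pq-1}$, so the hypothesis reads exactly $\sigma<\tfrac m2$, equivalently $r_2>1$. For suitable decay rates $\delta_1,\delta_2\in(0,1]$ to be fixed, put
\begin{align*}
&\mathcal A(T):=\sup_{0<t<T}\|u(t)\|_{\ell^{r_2}(V)},\quad \mathcal B(T):=\sup_{0<t<T}\|v(t)\|_{\ell^{r_2}(V)},\\
&\mathcal X(T):=\sup_{0<t<T}(1+t)^{\delta_1}\|u(t)\|_{\ell^\infty(V)},\quad \mathcal Y(T):=\sup_{0<t<T}(1+t)^{\delta_2}\|v(t)\|_{\ell^\infty(V)} ;
\end{align*}
after checking $u,v\in C([0,T];\ell^{r_2}(V))$ these are finite, continuous and nondecreasing in $T$. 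For the linear terms, Theorem~\ref{t2.1.} with $a=r_2$, $b=\infty$ gives $\|S(t)u_0\|_{\ell^\infty}\le Ct^{-\sigma}\|u_0\|_{\ell^{r_2}}$, and Lemma~\ref{l3.4} ($\|u_0\|_{\ell^\infty}\le\mu_{\min}^{-1/r_2}\|u_0\|_{\ell^{r_2}}$) handles $0<t\le1$; with $\ell^{r_2}$-contractivity this bounds the linear contributions by $C\|u_0\|_{\ell^{r_2}}$ and $C\|v_0\|_{\ell^{r_2}}$, i.e.\ by $C\hat{C}_1$ and $C\hat{C}_1^{1/p}$.

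For the Duhamel term of $u$ one picks an auxiliary exponent $\rho$ and estimates, via Theorem~\ref{t2.1.} and interpolation against $\ell^\infty$,
\begin{equation*}
\|S(t-s)v^p(s)\|_{\ell^\infty}\le C(t-s)^{-\frac m{2\rho}}\|v(s)\|_{\ell^{p\rho}}^p\le C(t-s)^{-\frac m{2\rho}}\|v(s)\|_{\ell^{r_2}}^{p\theta}\|v(s)\|_{\ell^\infty}^{p(1-\theta)},\qquad\theta=\tfrac{r_2}{p\rho},
\end{equation*}
so that after integration in $s$ everything reduces to convergent Beta integrals of the form $\int_0^t(t-s)^{-m/(2\rho)}(1+s)^{-\delta_2(p-r_2/\rho)}\,ds$; the symmetric estimates for $\int_0^tS(t-s)u^q(s)\,ds$ and for the $\ell^{r_2}$-norms of both Duhamel terms have the same structure. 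Collecting them gives a closed system
\begin{equation*}
\mathcal A(T)+\mathcal B(T)+\mathcal X(T)+\mathcal Y(T)\le C_0\bigl(\|u_0\|_{\ell^{r_2}}+\|v_0\|_{\ell^{r_2}}\bigr)+C_0\,\Phi\bigl(\mathcal A(T),\mathcal B(T),\mathcal X(T),\mathcal Y(T)\bigr),
\end{equation*}
where every monomial of $\Phi$ has $u$-degree $q$ or $v$-degree $p$ (this is where $pq>1$ and $p,q\ge1$ enter) and where the asymmetric hypothesis $\|u_0\|_{\ell^{r_2}}+\|v_0\|_{\ell^{r_2}}^p\le\hat{C}_1$ makes both the $u^q$- and the $v^p$-contributions $O(\hat{C}_1)$ when $\hat{C}_1\le1$. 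Hence, for $\hat{C}_1$ small, a continuity/bootstrap argument (the four quantities are continuous in $T$ and $O(\hat{C}_1^{1/p})$ as $T\to0^+$, by the previous paragraph and Lemma~\ref{l3.4}) yields $\mathcal A(T)+\mathcal B(T)+\mathcal X(T)+\mathcal Y(T)\le K$ with $K$ independent of $T<T^\ast$. In particular $\sup_{0<t<T^\ast}(\|u(t)\|_{\ell^\infty}+\|v(t)\|_{\ell^\infty})\le K$, so --- the local existence time in Theorem~\ref{t2.2} depending only on these norms --- the solution extends past any finite $T^\ast$; thus $T^\ast=+\infty$ and $(u,v)$ is a bounded global solution of \eqref{1.4.}.

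\emph{The main obstacle} is the exponent bookkeeping behind the closed system: one must choose the decay rates $\delta_1,\delta_2\in(0,1)$ and the auxiliary indices $\rho$ (for the $v^p$-term), $\rho'$ (for the $u^q$-term), and their counterparts for the two $\ell^{r_2}$-estimates, so that simultaneously each kernel exponent $\tfrac m{2\rho},\tfrac m{2\rho'}$ is $<1$, each interpolation parameter lies in $[0,1]$ (i.e.\ $p\rho,q\rho'\ge r_2$), each time-exponent $\delta_2(p-r_2/\rho)$, $\delta_1(q-r_2/\rho')$ is $<1$, and the net power of $t$ coming out of every Beta integral is $\le -\delta_1$, resp.\ $\le-\delta_2$, resp.\ $\le0$. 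The heart of the proof is to verify that such a choice exists exactly under the subcriticality hypothesis $\sigma<\tfrac m2$ for all admissible $p\ge q\ge1$, $pq>1$; the delicate regime is the borderline one (e.g.\ $p=q$, or $q=1$), where some of these inequalities degenerate to equalities and the Beta integrals pick up logarithmic factors, so the weights $(1+t)^{\delta_i}$ must be taken slightly below optimal --- this is where the strictness $r_2>1$ is used --- and a short case analysis is needed.
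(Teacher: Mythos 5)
Your overall strategy---the mild formulation, the $\ell^a$-$\ell^b$ smoothing of Theorem \ref{t2.1.}, time-weighted norms closed by Beta-function integrals, a small-data continuity argument, and continuation through the $\ell^\infty$-based local theory of Theorem \ref{t2.2}---is the same family of argument the paper uses (Lemmas \ref{l6.1}--\ref{l6.2}). But what you defer as the ``main obstacle'' (the choice of $\delta_1,\delta_2,\rho,\rho'$ making every kernel, interpolation and Beta exponent admissible) is precisely the content of Steps 1--6 of Lemma \ref{l6.2}, i.e.\ the heart of the proof, so the proposal is not yet a proof; and, more importantly, the specific closed system you set up cannot close when $p\neq q$. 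You measure \emph{both} components by the unweighted quantity $\sup_{t}\|\cdot\|_{\ell^{r_2}(V)}$, but $r_2=\frac m2\frac{pq-1}{q+1}$ (say $p\le q$) is scaling-critical only for $v$; the critical exponent for $u$ is $r_1=\frac m2\frac{pq-1}{p+1}\ge r_2$. The only decay your scheme can feed into the $u$-Duhamel term is $\|v(s)\|_{\ell^{b}(V)}\lesssim s^{-\frac m2(\frac1{r_2}-\frac1b)}$ for $b\ge r_2$ (interpolation of a bounded $\ell^{r_2}$ norm against an $\ell^\infty$ norm that can decay no faster than $t^{-m/(2r_2)}$, since even the linear part $S(t)v_0$ with $v_0\in\ell^{r_2}(V)$ does not), and then every estimate of $\bigl\|\int_0^t\sum_{y}P(t-s,\cdot,y)v^p(s,y)\mu(y)\,ds\bigr\|_{\ell^{r_2}(V)}$ via Theorem \ref{t2.1.} carries the net time power $1-\frac m2\cdot\frac{p-1}{r_2}=1-\frac{(p-1)(q+1)}{pq-1}$, which is strictly positive whenever $p<q$ (and equals $1$ when $p=1$). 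So $\mathcal A(T)$ grows polynomially in $T$ and your requirement that the $\ell^{r_2}$ Beta integrals have net power $\le0$ fails; your remark that $p=q$ is the delicate borderline is backwards---the symmetric bookkeeping is worst when $p$ and $q$ are far apart.

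The paper's proof avoids exactly this by using asymmetric spaces matched to the two critical exponents: $u$ is measured in $\ell^{s_1}(V)$ with $s_1=r_1/\delta$ and weight $t^{w}$, $v$ in $\ell^{s_2}(V)$ with $s_2=r_2/\delta$ and weight $t^{w_1}$ (Lemma \ref{l6.2}), the two Duhamel formulas are substituted into one another, and the result is a single scalar inequality $g(t)\le C_{20}\bigl(\hat C_2+g^{pq}(t)\bigr)$ that closes for small $\hat C_1$; the resulting $t^{-w}$, $t^{-w_1}$ bounds in $\ell^\infty$ then give globality. To repair your argument you would have to replace $\mathcal A,\mathcal B,\mathcal X,\mathcal Y$ by this asymmetric, weighted bookkeeping, at which point you are reproducing the paper's proof. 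A smaller but real omission: the parenthetical ``after checking $u,v\in C([0,T];\ell^{r_2}(V))$'' is itself nontrivial---knowing that the given solution lies and varies continuously in the auxiliary $\ell^{s}$ spaces (so the norms are finite and the vector-valued integrals are legitimate) is established in the paper by a separate contraction argument, Lemma \ref{l6.1}.
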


\begin{remark}
	Our results could apply to the lattice $\mathbb{Z}^{m}$.
\end{remark}

\section{Auxiliary result}
\subsection{Some auxiliary inequalities on graphs}
Let $G=(V,E,\omega,\mu)$ be a graph. In our quest to prove Theorems \ref{l3.6} and \ref{2.3}, we must first establish certain fundamental inequalities on the graph $G = (V, E, \omega, \mu)$. Specifically, we commence with the following H\"{o}lder-type inequalities, which play a crucial role in the subsequent derivations.

\begin{lem}\label{l3.2}
Assume $1\le a,b \le \infty$, $\frac1{a}+\frac1{b}=1$, $f\in \ell^{a}(V)$, $g\in \ell^{b}(V)$. Then 
\begin{equation}\label{3.1}
\|fg\|_{\ell^{1}(V)} \le\|f\|_{\ell^a(V)}\|g\|_{\ell^b(V)}
\end{equation}
\end{lem}
\begin{proof}
If $\|f\|_{\ell^a(V)}\|g\|_{\ell^b(V)}=0$, then $f\equiv 0$ or $g\equiv 0$ on $V$. Thus \eqref{3.1} follows immediately. Next, we suppose that 
	$$\|f\|_{\ell^a(V)}\not=0\text{~and~}\|g\|_{\ell^b(V)}\not=0.$$
To prove \eqref{3.1}, it suffices to prove that
	\begin{equation*}
\left\|\frac{f}{\|f\|_{\ell^a(V)}} \frac{g}{\|g\|_{\ell^b(V)}}\right\|_{\ell^{1}(V)}\le 1 .
	\end{equation*}
	Hence we can assume that $$\|f\|_{\ell^a(V)}=1\text{~and~}\|g\|_{\ell^b(V)}=1.$$ By Young's inequality (see last line of page 708 of \cite{LCE}), we see that for $1<a,b<\infty$,
	\begin{equation*}
		\begin{aligned}
			\sum_{x\in V}|f(x)g(x)|\mu(x)&=\sum_{x\in V}|f(x)g(x)|\mu^{\frac1{a}+\frac1{b}}(x) \\
			 &\le\sum_{x\in V} \left[\frac{|f(x)\mu^{\frac1{a}}(x)|^a}{a}+\frac{|g(x)\mu^{\frac1{b}}(x)|^b}{b}\right] \\
			&=\frac{1}{a}\|f\|_{{\ell}^a(V)}^a+\frac{1}{b}\|g\|_{\ell^b(V)}^b \\
			&=\frac{1}{a}+\frac{1}{b}\\
			&=1.
		\end{aligned}
	\end{equation*}
When either $a =\infty$ or $b=\infty$, without loss of generality, assume $a = \infty$. Then, by the properties of the norm and the nature of the sum $\sum_{x\in V}|f(x)g(x)|\mu(x)$, it is straightforward to observe that $\|fg\|_{\ell^1(V)}\leq 1$ holds trivially.
Therefore, we get $\|f g\|_{\ell^{1}(V)} \le 1$.

We now complete the proof. 
\end{proof}

The following result follows from Dirichlet form theory in \cite{KL} and \cite{FOT}.
\begin{lem}\label{l3.9}
Let $k\in [1,+\infty]$. Suppose $I\in\ell^{k}(V)$. Then 
\bess
\sum_{y\in V} P(t,\cdot,y)I(y)\mu(y)\in\ell^{k}(V)~\text{ for all }~t\ge 0.
\eess
Moreover,
\bess\left\|\sum_{y\in V} P(t,\cdot,y)I(y)\mu(y)\right\|_{\ell^{k}(V)}\le\left\|I\right\|_{\ell^{k}(V)}~\text{ for }~t\ge 0.\eess
\end{lem}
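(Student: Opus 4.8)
The goal is to prove Lemma~\ref{l3.9}: if $I \in \ell^k(V)$ for some $k \in [1,\infty]$, then $P_t I(x) := \sum_{y \in V} P(t,x,y) I(y)\mu(y)$ lies in $\ell^k(V)$ for all $t \geq 0$, with $\|P_t I\|_{\ell^k(V)} \leq \|I\|_{\ell^k(V)}$. The plan is to exploit the fact that $(P_t)_{t \geq 0}$ is a symmetric Markov semigroup associated with the Dirichlet form of the graph, together with a standard Riesz--Thorin interpolation argument. I would first dispatch the two endpoint cases directly: for $k = 1$, write $\|P_t I\|_{\ell^1(V)} = \sum_{x} \mu(x) |\sum_y P(t,x,y) I(y)\mu(y)| \leq \sum_x \mu(x) \sum_y P(t,x,y)|I(y)|\mu(y)$, interchange the order of summation (legitimate since all terms are nonnegative by Proposition~\ref{p2.1}(ii)), use symmetry $P(t,x,y) = P(t,y,x)$ (Proposition~\ref{p2.1}(iii)) and the conservation/contraction bound $\sum_x P(t,y,x)\mu(x) \leq 1$ (Proposition~\ref{p2.1}(iv)) to get $\|P_t I\|_{\ell^1(V)} \leq \sum_y \mu(y)|I(y)| = \|I\|_{\ell^1(V)}$. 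For $k = \infty$, directly $|P_t I(x)| \leq \|I\|_{\ell^\infty(V)} \sum_y P(t,x,y)\mu(y) \leq \|I\|_{\ell^\infty(V)}$ by Proposition~\ref{p2.1}(iv).

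For the intermediate range $1 < k < \infty$, I would invoke the Riesz--Thorin interpolation theorem. The operator $P_t$ is linear and, by the two endpoint estimates just established, bounded from $\ell^1(V)$ to $\ell^1(V)$ with norm $\leq 1$ and from $\ell^\infty(V)$ to $\ell^\infty(V)$ with norm $\leq 1$. Since $\frac{1}{k} = \frac{1-\theta}{1} + \frac{\theta}{\infty}$ with $\theta = 1 - \frac{1}{k} \in (0,1)$, Riesz--Thorin gives $P_t : \ell^k(V) \to \ell^k(V)$ with operator norm $\leq 1^{1-\theta} \cdot 1^\theta = 1$, which is exactly the claimed inequality; boundedness in $\ell^k$ also yields $P_t I \in \ell^k(V)$. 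One point requiring a little care is that $P_t I(x)$ should be well-defined (the defining sum absolutely convergent) for every $x$ and every $I \in \ell^k(V)$; this follows because, by Lemma~\ref{l3.5}, $\ell^k(V) \subset \ell^\infty(V)$ when $\mu_{\min} > 0$, so $\sum_y P(t,x,y)|I(y)|\mu(y) \leq \|I\|_{\ell^\infty(V)} < \infty$ — alternatively one can first prove the estimate for finitely supported $I$, where convergence is trivial, and then pass to the limit using the $\ell^k$ bound and density of finitely supported functions in $\ell^k(V)$ for $k < \infty$.

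I do not expect a serious obstacle here; the statement is essentially the well-known fact that a symmetric Markov operator is a contraction on every $\ell^k$, and the reference to \cite{KL} and \cite{FOT} (Dirichlet form theory) in the paper signals that the authors intend to simply cite this. The only mild subtlety is the one noted above: ensuring the sum defining $P_t I$ converges pointwise before one can even speak of its $\ell^k$ norm, and justifying the Fubini interchange in the $\ell^1$ computation — both handled by nonnegativity of the heat kernel and, if needed, a finitely-supported approximation argument. If one wanted to avoid citing Riesz--Thorin altogether, an alternative is a direct Jensen-type argument: since $\nu_x(dy) := P(t,x,y)\mu(y)$ has total mass $\leq 1$, convexity of $s \mapsto |s|^k$ gives $|P_t I(x)|^k \leq (\sum_y P(t,x,y)\mu(y))^{k-1} \sum_y P(t,x,y)|I(y)|^k \mu(y) \leq \sum_y P(t,x,y)|I(y)|^k\mu(y)$, and then summing over $x$ with $\mu(x)$, interchanging sums, and using symmetry plus $\sum_x P(t,y,x)\mu(x)\leq 1$ yields $\|P_t I\|_{\ell^k(V)}^k \leq \|I\|_{\ell^k(V)}^k$ directly. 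I would present this self-contained Jensen argument as the main proof, since it is elementary and avoids invoking heavier machinery.
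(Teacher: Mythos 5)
Your proof is correct, but it takes a different route from the paper: the paper gives no argument at all for this lemma, simply asserting that it ``follows from Dirichlet form theory in \cite{KL} and \cite{FOT}'', i.e.\ it invokes the general fact that the semigroup generated by a Dirichlet form is a sub-Markovian contraction on every $\ell^{k}$. You instead supply a self-contained elementary proof: the endpoints $k=1$ and $k=\infty$ follow from positivity, symmetry and sub-stochasticity of $P(t,\cdot,\cdot)$ (Proposition \ref{p2.1}(ii)--(iv)), and the intermediate range either by Riesz--Thorin or, as you prefer, by the Jensen/H\"older estimate $|P_tI(x)|^{k}\le\sum_{y}P(t,x,y)|I(y)|^{k}\mu(y)$ followed by Tonelli and symmetry. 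The Jensen computation is right (the mass bound $\sum_{y}P(t,x,y)\mu(y)\le1$ absorbs the factor $\bigl(\sum_y P\mu\bigr)^{k-1}$), the Fubini interchange is justified by nonnegativity, and your attention to pointwise well-definedness of the sum is a genuine point the paper glosses over --- though note the embedding $\ell^{k}(V)\subset\ell^{\infty}(V)$ you want there is the paper's Lemma \ref{l3.4} rather than Lemma \ref{l3.5}, and it uses the standing assumption $\mu_{\min}>0$. What the paper's citation buys is brevity and automatic applicability in the unbounded-Laplacian setting via the abstract theory; what your argument buys is transparency and independence from that machinery, using only the listed kernel properties, so it is a perfectly acceptable (arguably preferable) replacement.
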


\subsection{Comparison Principle}
In this subsection, we study a comparison principle for later use.

\begin{lem}\label{l3.10}
Suppose $D_{\mu}<\infty$, $q\ge 1, T>0, u_1, u_2, v_1, v_2 \in C\left([0,T];\ell^{\infty}(V)\right)$, $u_{1}$ and $u_{2}$ are nonnegative, $g$ is nondecreasing and globally Lipschitz continuous and $u_{1,0}, u_{2,0}, v_{1,0}, v_{2,0} \in V^{\mathbb{R}}$.
Assume that $u_1, v_1, u_2, v_2$ satisfy
\bes
&&u_1(t, x) \le \sum_{y \in V} P(t, x, y) u_{1,0}(y)\mu(y)+\int_0^t \sum_{y \in V} P(t-s, x, y) g\left(v_1\right)(s,y)\mu(y)ds,\label{3.12}\\
&&v_1(t, x) \le \sum_{y \in V} P(t, x, y) v_{1,0}(y) \mu(y)+\int_0^t \sum_{y \in V} P(t-s, x, y) u_1^q(s,y)\mu(y)ds,\label{3.13}\\
&&u_2(t, x) \ge \sum_{y \in V} P(t, x, y) u_{2,0}(y) \mu(y)+\int_0^t \sum_{y \in V} P(t-s, x, y) g\left(v_2\right)(s,y)\mu(y)ds,\label{3.14}\\
&&v_2(t, x) \ge \sum_{y \in V} P(t, x, y) v_{2,0}(y) \mu(y)+\int_0^t \sum_{y \in V} P(t-s,x,y)u_2^q(s,y)\mu(y)ds\label{3.15}
\ees
for $t>0$ and $x \in V$,
and
\bes
&&u_1(0, x)=u_{1,0}(x) \le u_{2,0}(x)=u_2(0,x),x\in V,\label{3.16}\\
&&v_1(0, x)=v_{1,0}(x) \le v_{2,0}(x)=v_2(0,x),x\in V.\label{3.17.}
\ees
Then $u_1(t, x) \le u_2(t, x)$ and $v_1(t, x) \le v_2(t, x)$ for $t \in[0, T]$ and $x \in V$. 
\end{lem}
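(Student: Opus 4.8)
The plan is to prove the comparison principle by a Picard-type iteration argument combined with the monotonicity of the heat kernel (positivity, Proposition \ref{p2.1}(ii)) and the monotonicity hypotheses on $g$ and on the nonlinearity $u \mapsto u^q$ (valid since $u_1, u_2 \ge 0$ and $q \ge 1$). First I would set $w(t,x) := u_2(t,x) - u_1(t,x)$ and $z(t,x) := v_2(t,x) - v_1(t,x)$, and subtract the integral inequalities: from \eqref{3.14} minus \eqref{3.12} and \eqref{3.15} minus \eqref{3.13}, together with \eqref{3.16}--\eqref{3.17.}, one gets
\begin{align*}
w(t,x) &\ge \int_0^t \sum_{y \in V} P(t-s,x,y)\bigl[g(v_2)(s,y) - g(v_1)(s,y)\bigr]\mu(y)\,ds,\\
z(t,x) &\ge \int_0^t \sum_{y \in V} P(t-s,x,y)\bigl[u_2^q(s,y) - u_1^q(s,y)\bigr]\mu(y)\,ds,
\end{align*}
where I have used that the contribution of the initial data is $\sum_y P(t,x,y)(u_{2,0}-u_{1,0})(y)\mu(y) \ge 0$ by positivity of $P$ and \eqref{3.16} (similarly for $v$).

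The key device is a Gronwall-type closing argument on a short time interval, which then propagates to all of $[0,T]$ by a standard continuation/partition argument. Fix $\tau \in (0,T]$ to be chosen, and work on $[0,\tau]$. Let $M_w(t) := \sup_{0 \le s \le t}\| w^-(s,\cdot)\|_{\ell^\infty(V)}$ and $M_z(t) := \sup_{0 \le s \le t}\|z^-(s,\cdot)\|_{\ell^\infty(V)}$, where $w^- = \max\{-w,0\}$. Since $g$ is nondecreasing and globally Lipschitz with constant $L$, we have $g(v_2) - g(v_1) \ge -L z^- \ge -L\,(\text{something})$; more precisely, on the set where $v_2 < v_1$, $g(v_2)-g(v_1) \ge -L(v_1 - v_2) = -L z^-$, and where $v_2 \ge v_1$, $g(v_2)-g(v_1)\ge 0 \ge -Lz^-$. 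Similarly, since $u_1, u_2 \ge 0$ and $q \ge 1$, on $[0,\tau]$ the functions $u_1, u_2$ are bounded (they lie in $C([0,\tau];\ell^\infty(V))$), say by $R$, so $|u_2^q - u_1^q| \le q R^{q-1}|u_2 - u_1|$ and hence $u_2^q - u_1^q \ge -qR^{q-1} w^-$. Plugging these into the displayed inequalities and using $\sum_y P(t-s,x,y)\mu(y) \le 1$ (Proposition \ref{p2.1}(iv)) gives, for every $x$,
\begin{align*}
-w(t,x) &\le L\int_0^t \|z^-(s,\cdot)\|_{\ell^\infty(V)}\,ds \le L\,t\,M_z(t),\\
-z(t,x) &\le qR^{q-1}\int_0^t \|w^-(s,\cdot)\|_{\ell^\infty(V)}\,ds \le qR^{q-1}\,t\,M_w(t),
\end{align*}
whence $M_w(\tau) \le L\tau\,M_z(\tau)$ and $M_z(\tau) \le qR^{q-1}\tau\,M_w(\tau)$, so $M_w(\tau) \le L q R^{q-1}\tau^2\,M_w(\tau)$. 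Choosing $\tau$ small enough that $LqR^{q-1}\tau^2 < 1$ forces $M_w(\tau) = 0$, hence $M_z(\tau)=0$, i.e. $u_1 \le u_2$ and $v_1 \le v_2$ on $[0,\tau]$.

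To reach all of $[0,T]$, I would restart the argument at time $\tau$: the same integral inequalities hold with initial time $\tau$ in place of $0$ (using the semigroup property, Proposition \ref{p2.1}(v), and $u_1(\tau,\cdot)\le u_2(\tau,\cdot)$, $v_1(\tau,\cdot)\le v_2(\tau,\cdot)$ now playing the role of the ordered initial data), and the relevant constants $L$, $q$, and the sup-bound $R$ on $u_1,u_2$ over $[0,T]$ are uniform, so the step size $\tau$ can be taken the same at each stage. After finitely many steps we cover $[0,T]$. The main obstacle — and the place requiring genuine care rather than routine estimation — is the bookkeeping of signs: making sure that \emph{all} error terms from the initial data and from the monotone nonlinearities point in the favorable direction, and that taking $\ell^\infty$ suprema over $V$ is legitimate (here one uses exactly that $u_i, v_i \in C([0,T];\ell^\infty(V))$, so they are uniformly bounded on $[0,T]\times V$ and the interchanges of sup and integral are justified via Lemma \ref{t2.1}). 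A secondary point to verify is that $w^-$ and $z^-$ are themselves bounded and measurable in $t$ with values in $\ell^\infty(V)$, which is immediate from the corresponding property of $w,z$, so the quantities $M_w(t), M_z(t)$ are well-defined and finite.
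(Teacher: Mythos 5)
Your short-time argument is sound: subtracting the sub- and supersolution inequalities, discarding the nonnegative initial-data term (positivity of $P$ plus \eqref{3.16}--\eqref{3.17.}), and using $g(v_2)-g(v_1)\ge -L z^-$, $u_2^q-u_1^q\ge -qR^{q-1}w^-$ (here nonnegativity of $u_1,u_2$ and $q\ge 1$ are used correctly) together with $\sum_{y}P(t-s,x,y)\mu(y)\le 1$ indeed gives $M_w(\tau)\le LqR^{q-1}\tau^2 M_w(\tau)$, hence the ordering on $[0,\tau]$ for small $\tau$. The genuine gap is in the continuation step. The hypotheses \eqref{3.12}--\eqref{3.15} are one-sided \emph{integral} inequalities anchored at time $0$; they do not automatically ``restart'' at time $\tau$ with $u_i(\tau,\cdot),v_i(\tau,\cdot)$ as ordered data. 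If you insert the time-$\tau$ version of \eqref{3.12} into $\sum_{y}P(t-\tau,x,y)u_1(\tau,y)\mu(y)$ and use the semigroup property (Proposition \ref{p2.1}(v)), you only learn that the restarted right-hand side is bounded above by the original right-hand side of \eqref{3.12}; combined with the hypothesis that $u_1(t,x)$ is below the original right-hand side, this gives no control of $u_1$ by the restarted expression, and the analogous substitution for the supersolution $u_2$ in \eqref{3.14} also produces an inequality in the unusable direction. So the claim ``the same integral inequalities hold with initial time $\tau$ in place of $0$'' is not justified by the hypotheses as stated.

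The repair is easy and stays within your scheme: for $t\in[\tau,2\tau]$ keep the inequalities anchored at $0$ and simply drop the part of the Duhamel integral over $[0,\tau]$, which is nonnegative because on $[0,\tau]$ you have already proved $v_1\le v_2$ and $0\le u_1\le u_2$, so $g(v_2)-g(v_1)\ge 0$ ($g$ nondecreasing) and $u_2^q-u_1^q\ge 0$ ($q\ge 1$) there; this yields $w(t,x)\ge\int_\tau^t\sum_{y}P(t-s,x,y)\bigl[g(v_2)-g(v_1)\bigr](s,y)\mu(y)\,ds$ and the analogous bound for $z$, after which your window argument runs with a uniform step. Alternatively, you can dispense with time-stepping altogether, as the paper does: it bounds $[u_1-u_2]_+$ by a double time integral of its own sup-norm, sets $f(t)$ equal to that double integral, derives $f''\le\alpha f$ with $f(0)=f'(0)=0$, and concludes $f\equiv 0$ on all of $[0,T]$ in one stroke; your two coupled bounds for $M_w,M_z$ would likewise close on the whole interval by a Gronwall or Picard-iteration argument without any smallness of $\tau$.
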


\begin{proof}
Clearly, $v_1$ and $v_2$ are bounded. Substracting \eqref{3.14} from \eqref{3.12}, since $g$ is nondecreasing and locally Lipschitz continuous, by \eqref{3.16}, we can find constant $L_{1}>0$ so that 
\bes
(u_1-u_2)(t,x)&\le& \int_0^t \sum_{y \in V} P(t, x, y)\left[u_{1,0}(y)-u_{2,0}(y)\right]\mu(y)\nm\\
&&+\int_0^t \sum_{y \in V} P(t-s, x, y)\left(g\left(v_1\right)-g\left(v_2\right)\right)(s,y)\mu(y)ds\nm\\
&\le& \int_0^t \sum_{y \in V} P(t-s, x, y) L_1\left[\left(v_1-v_2\right)(s,y)\right]_{+}\mu(y)ds.\nm
\ees
It follows that \bes
[(u_1-u_2)(t,x)]_{+}\le\int_0^t \sum_{y\in V}P(t-s,x,y)L_1\left[\left(v_1-v_2\right)(s,y)\right]_{+}\mu(y)ds.\label{3.18.}
\ees
Substracting \eqref{3.15} from \eqref{3.13}, by \eqref{3.17.}, we see that
\bes
\left(v_1-v_2\right)(t,x) &\le&\int_0^t \sum_{y \in V} P(t-s, x, y)\left[u_1^q(s,y)-u_2^q(s,y)\right]\mu(y)ds\nm\\
	&\le&\int_0^t \sum_{y\in V}P(t-s,x,y)\left[u_1^q(s,y)-u_2^q(s,y)\right]_{+}\mu(y)ds,\label{3.12,}
\ees
and hence that 
\bes\label{3.20.}
\left[(v_1-v_2)(t, x)\right]_{+}&\le&\int_0^t\sum_{y\in V}P(t-s,x,y)\left[u_1^q(s,y)-u_2^q(s,y)\right]_{+}\mu(y)ds\nm\\
&\le&\int_0^t \sum_{y\in V}P(t-s,x,y)\left\|\left[u_1^q(s,\cdot)-u_2^q(s,\cdot)\right]_{+}\right\|_{\ell^{\infty}(V)}\mu(y)ds.
\ees
If $q>1$, by the mean value theorem, we have for any $(s, y)$
satisfying $u_{2}(s, y) \neq u_{1}(s, y)$,
\bess
\left|\frac{u_1^q(s,y)-u_2^q(s, y)}{u_1(s, y)-u_2(s, y)}\right|=\left|q \theta^{q-1}(s,y)\right| 
&\le&q\left(\left\|u_1\right\|_{C\left([0,T];\ell^{\infty}(V)\right)}+\left\|u_2\right\|_{C\left([0,T];\ell^{\infty}(V)\right)}\right)^{q-1}\nm\\
&=:&L_2,
\eess
where $\theta(s, y)$ is between $u_{1}(s, y)$ and $u_{2}(s, y)$.
It follows that for $q\ge 1$,
$$
\left|u_1^q(s,y)-u_2^q(s,y)\right|\le L\left|u_1(s, y)-u_2(s, y)\right|
$$
for all $s \in[0, t]$ and $y \in V$, where $L=L_2+1.$ Due to $f(x)=x^q$ is nondecreasing in $x \ge 0$, we conclude that
$u_1^q(s, y)-u_2^q(s, y)$ has the same sign with $u_1(s, y)-u_2(s, y)$, and hence that
$$
\left[u_1^q(s, y)-u_2^q(s, y)\right]_{+} \leq L\left[u_1(s, y)-u_2(s, y)\right]_{+} .
$$
Substituting this into \eqref{3.20.}, by Proposition \ref{p2.1} (iv), we obtain 
\bess
\left[\left(v_1-v_2\right)(t,x)\right]_{+}&\le& \int_0^t \sum_{y\in V}P(t-s,x,y)
L\left\|\left[u_1(s,)-u_2(s, \cdot)\right]_{+}\right\|_{\ell^{\infty}(V)}\mu(y)ds\nm\\
	&\le& \int_0^t L\left\|\left[u_1(s,)-u_2(s, \cdot)\right]_{+}\right\|_{\ell^{\infty}(V)}ds.
\eess
Substituting this into \eqref{3.18.}, and using Proposition \ref{p2.1} (iv) again, we have
$$\left[\left(u_1-u_2\right)(t, x)\right]_{+} \le L_1 L \int_0^t \int_0^s\left\|\left[u_1(\tau,\cdot)-u_2(\tau, \cdot)\right]_{+}\right\|_{\ell^{\infty}(V)}d\tau ds.$$
This implies that 
\bes\label{3.22..}
\left\|\left[\left(u_1-u_2\right)(t, \cdot)\right]_{+}\right\|_{\ell^{\infty}(V)}\leq L_1L\int_0^t \int_0^s\left\|\left[u_1(\tau, \cdot)-u_2(\tau, \cdot)\right]_+\right\|_{\ell^{\infty}(V)}d\tau ds.
\ees
Let 
\bes 
f(t):=\int_0^t \int_0^s\left\|\left[u_1(\tau, \cdot)-u_2(\tau, \cdot)\right]_{+}\right\|_{\ell^{\infty}(V)}d\tau ds.\label{3.22.}
\ees
Clearly, for any $0<\delta\ll 1$ and $s\in[0,T)$
\bes\label{3.24.}
&&\left| \left\lVert \left[(u_1 - u_2)(s + \delta, \cdot)\right]_+ \right\rVert_{\ell^\infty(V)}-\left\lVert \left[(u_1 - u_2)(s, \cdot)\right]_+ \right\rVert_{\ell^\infty(V)} \right|\nm \\
    &\leq& \left\| [(u_1 - u_2)(s +\delta, \cdot)]_{+} - [(u_1 - u_2)(s, \cdot)]_{+} \right\|_{\ell^\infty(V)} \nm \\
    & \leq& \left\| [(u_1 - u_2)(s +\delta, \cdot)] - [(u_1 - u_2)(s, \cdot)] \right\|_{\ell^\infty(V)}. 
\ees
In view of this, since $u_1, u_2 \in C\left([0,T];\ell^{\infty}(V)\right)$, by \eqref{3.24.},
we conclude that
$$\left\|\left[u_1(\tau, \cdot)-u_2(\tau, \cdot)\right]_{+}\right\|_{\ell^{\infty}(V)}\text{ is continuous } w.r.t.~ \tau \in[0, T).$$ The continuity at $T$ can be deduced by a similar argument. Then by virtue of \eqref{3.22.}, it is easy to check that $f \in C^2[0, T]$. Thus, from \eqref{3.22..} and \eqref{3.22.}, we have
\bes
f^{\prime\prime}(t)\leq L_1Lf(t)=:\alpha f(t)\text~{ for }~t\in[0,T],\label{3.17}
\ees
where $\alpha=L_1 L>0$ is a constant independent of $t$. 
By \eqref{3.22.},  we see that
\begin{equation}\label{3.26,}
f(0)=f^{\prime}(0)=0 \text{ and } f(t), f^{\prime}(t)\geq0 \text{ for } t\in[0,T].
\end{equation}
Due to $\alpha>0$, by \eqref{3.17} and \eqref{3.26,}, we deduce that
\[
\int_{0}^{t}f^{\prime\prime}(t)f^{\prime}(t)dt\leq\int_{0}^{t}\alpha f(t)f^{\prime}(t)dt \text{ for } t\in[0,T]
\]
and hence that
\[
\frac{(f^{\prime}(t))^{2}}{2}\leq\alpha\frac{f^{2}(t)}{2} \text{ for } t\in[0,T].
\]
By \eqref{3.26,}, this implies that
\[
f^{\prime}(t)\leq\sqrt{\alpha}f(t) \text{ for } t\in[0,T].
\]
It follows that
\[
\left[e^{-\sqrt{\alpha}t}f(t)\right]^{\prime}=e^{-\sqrt{\alpha}t}(-\sqrt{\alpha}f + f^{\prime}(t))\leq0~\text{ for }t\in[0,T].
\]
Thus we see that
\[
e^{-\sqrt{\alpha}t}f(t)\leq0 \text{ for } t\in[0,T]
\]
and hence that $f(t)\equiv 0$ by \eqref{3.26,}.
From \eqref{3.22.}, we see that $$\left\|\left[\left(u_1-u_2\right)(t,\cdot)\right]_{+}\right\|_{\ell^{\infty}(V)}\equiv0\text{ for all }t \in[0,T],$$
and hence that
$$u_1(t,x)\leq u_2(t,x)\text{ for all }t\in[0, T]\text{ and }x \in V.$$
By virtue of \eqref{3.12}, similarly, we deduce
$v_1(t, x) \leq v_2(t, x)$ for all $t \in[0, T]$ and $x \in V$.

We now complete the proof.
\end{proof}

\section{Proof of Theorems \ref{l3.6}-\ref{t2.3}}

Having laid out the essential propositions and foundational concepts in the prior discussions, we now delve into the detailed proofs of Theorems \ref{l3.6} and \ref{t2.1.}, these theorems serving as cornerstones for our subsequent analyses.

\subsection{Proofs of Theorems \ref{l3.6} and \ref{t2.1.}}
\begin{proof}[Proof of Theorem \ref{l3.6}]
    Since $\mathcal{V}(x,r)\ge c_{3}R^{m}$ for some $c_3>0$ and all $R>0$, by Proposition \ref{p2.3}, we know that 
    \begin{equation*}
        P(t,x,y)\le\frac{C_1}{\mathcal{V}(x,\sqrt{t})}\le\frac{C_1}{c_3t^{\frac{m}{2}}}~\text{for~all}~ t>0~\text{and}~x,~y\in V.
    \end{equation*}
Then for all $x\in V$, 
\begin{equation}\label{4.1}
        \|P(t,x,\cdot)\|_{\ell^{\infty}(V)}\le \tilde{C}_4t^{-\frac{m}{2}},
    \end{equation}
where $\tilde{C}_4=\frac{C_1}{c_3}$. Thus, we know that \eqref{2.7} holds when $r=\infty$. 
    
    Next, we suppose that $r\in[1,+\infty)$.
    Since $P>0$ satisfies 
    $$P_{t}(t,x,y)+\Delta_xP(t,x,y)=0,$$ 
    by Proposition \ref{p2.6},
    we know that for all $t>0$ and $x,y,z\in V$, which implies that
    $$
    P(t,y,x) \leq P(2 t, z, x) 2^n \exp \left(\frac{4 d^2(y, z) D}{t}\right)
    $$
    for all $t>0$ and $x,y,z\in V$, where 
    $$D=\frac{\mu_{max}}{\omega_{min}}.$$ 
From this and \eqref{4.1}, we see that
    \bes\label{3.5.}
    P^r(t,y,x)&\leq& P^r(2t,z,x)2^{nr}\exp\left(\frac{4rd^2(z,y)D}{t}\right)\nm\\
    &\leq& P^r(2t,z,x)2^{nr}\exp(4rD)\nm\\
    &=&P(2t,z,x)P^{r-1}(2t,z,x)2^{nr}\exp(4rD)\nm\\
    &\leq& P(2t,z,x)\left(\tilde{C}_4 (2t)^{-\frac{m}{2}}\right)^{r-1}2^{nr}\exp(4rD)\nm\\
    &=&2^{-\frac{m}{2}(r-1)}\tilde{C}_4^{r-1}2^{nr}\exp(4rD)t^{-\frac{m}{2}(r-1)}P(2t,z,x)\nm\\
    &=&C_5 t^{-\frac{m}{2}(r-1)}P(2t,z,x)
    \ees
    for all $x$, $y\in V$ and $z\in B(y,\sqrt{t})$, where 
    $$C_5=2^{-\frac{m}{2}(r-1)}\tilde{C}_4^{r-1}2^{nr}\exp(4rD).$$ 
Thanks to $D_{\mu}<+\infty$, it follows from \eqref{3.5.} and Proposition \ref{p2.1} (iv)  that 
    \begin{equation*}
        \begin{aligned}
            \sum_{x\in V} P^r(t,y,x)\mu(x) &\le C_5 t^{-\frac{m}{2}(r-1)}\sum_{x\in V} P(2t,z,x)\mu(x)\\
            & \le C_5 t^{-\frac{m}{2}(r-1)},
        \end{aligned}
    \end{equation*}
    which implies that 
    \begin{equation*}
        \|P(t,y,\cdot)\|_{\ell^r(V)}\le C_5^{\frac{1}{r}} t^{-\frac{m}{2}\left(1-\frac{1}{r}\right)}~\text{for all}~y\in V\text{ and }~t>0.
    \end{equation*}
Taking $C_{4}=C_{5}^{\frac{1}{r}}$, then we complete the proof.
\end{proof}

With the proof of the preceding assertion concluded, we now embark on the proof of Theorem \ref{t2.1.}, which proceeds as follows.

\begin{proof}[Proof of Theorem \ref{t2.1.}]
If $b = \infty$, then by H\"{o}lder inequality (Lemma \ref{l3.2}),
\bess
\left|\sum_{y\in V}P(t,x,y)g(y)\mu(y)\right| &\leq& \left[\sum_{y\in V}P(t,x,y)^r\mu(y)\right]^{\frac{1}{r}}\left[\sum_{y\in V}g^a(y)\mu(y)\right]^{\frac{1}{a}}\nm\\
&\leq& C_4t^{-\frac{m}{2}(1 - \frac{1}{r})}\|g\|_{\ell^a(V)}.
\eess

If $1<b<\infty$, then 
\bes\label{4.4}
r\leq b \text{ and } a\leq b.
\ees Thus
$$
U(x,y):=|P(t,x,y)|^{\frac{r}{b}}|g(y)|^{\frac{a}{b}}, \quad V(x,y):=|P(t,x,y)|^{1 - \frac{r}{b}}|g(y)|^{1 - \frac{a}{b}}
$$
is well-defined.
Define
$$m:=\begin{cases}
\frac{r}{(1 - \frac{r}{b})b'}~,&\text{ if }r<b,\\
\infty,&\text{ if }r=b,
\end{cases}
 \quad 
 n:=\begin{cases}
  \frac{a}{(1 - \frac{a}{b})b'} ~,&\text{ if }a<b,\\
     \infty,&\text{ if }a=b,
 \end{cases}$$
where $b$ satisfies
$$\frac{1}{b}+\frac{1}{b^{'}}=1.$$
Obviously, 
$$m,n\in[1,+\infty], ~\frac{1}{m}+\frac{1}{n}=1.$$
Then, when $m,n<\infty$, by H\"{o}lder inequality, we deduce that

\bess
    \|V(x,\cdot)\|_{\ell^{b'}(V)}&=&\left[\sum_{y\in V}|P(t,x,y)|^{(1 - \frac{r}{b})b'}|g(y)|^{(1 - \frac{a}{b})b'}\mu(y)\right]^{\frac{1}{b'}}\nm\\
    &\leq&\left[\sum_{y\in V}|P(t,x,y)|^r\mu(y)\right]^{\frac{1}{mb'}}\left[\sum_{y\in V}|g(y)|^a\mu(y)\right]^{\frac{1}{nb'}}\nm\\
    &=&\|P(t,x,\cdot)\|_{\ell^r(V)}^{\frac{r}{mb'}}\|g\|_{\ell^a(V)}^{\frac{a}{nb'}}\nm\\
    &=&\|P(t,x,\cdot)\|_{\ell^r(V)}^{1 - \frac{r}{b}}\|g\|_{\ell^a(V)}^{1 - \frac{a}{b}}. 
\eess
When $m=\infty$ or $n=\infty$, direct calculation yields that
\bes
\|V(x,\cdot)\|_{\ell^{b'}(V)}\le \|P(t,x,\cdot)\|_{\ell^r(V)}^{1 - \frac{r}{b}}\|g\|_{\ell^a(V)}^{1 - \frac{a}{b}}\label{4.5}
\ees
still holds.
Due to
\bess
U(x,y)V(x,y)=|P(t,x,y)||g(y)|,
\eess
by H\"{o}lder inequality, we know that
\bes
\sum_{y\in V}|P(t,x,y)g(y)|\mu(y)\leq\|V(x,\cdot)\|_{\ell^{b'}(V)}\|U(x,\cdot)\|_{\ell^{b}(V)}.\label{4.6}
\ees
From  \eqref{4.5} and \eqref{4.6}, we know that
\bess
\left[\sum_{y\in V}\left|P(t,x,y)g(y)\right|\mu(y)\right]^b\leq\|P(t,x,\cdot)\|_{\ell^r(V)}^{b - r}\|g\|_{\ell^a(V)}^{b - a}\|U(x,\cdot)\|_{\ell^b(V)}^b.
\eess
By Theorem \ref{l3.6}, it follows that
\bes
    \sum_{x\in V}\left[\sum_{y\in V}|P(t,x,y)g(y)|\mu(y)\right]^b\mu(x)&\leq&\|P(t,x,\cdot)\|_{\ell^r(V)}^{b - r}\|g\|_{\ell^a(V)}^{b - a}\sum\limits_{x\in V}\sum\limits_{y\in V}U^b(x,y)\mu(y)\mu(x)\nm\\
    &\leq& C_4^{b - r}t^{-\frac{m}{2}(1 - \frac{1}{r})(b-r)}\|g\|_{\ell^a(V)}^{b-a}\sum\limits_{x\in V}\sum_{y\in V}U^b(x,y)\mu(y)\mu(x).\label{4.6,}
\ees
Due to $g\in\ell^a(V)$, we conclude that
\bess
    \sum_{x\in V}\sum_{y\in V}U^b(x,y)\mu(y)\mu(x)&=&\sum_{x\in V}\sum_{y\in V}P^r(t,x,y)|g(y)|^a\mu(y)\mu(x)\nm\\
    &=&\sum_{y\in V}\sum_{x\in V}P^r(t,x,y)|g(y)|^a\mu(x)\mu(y)\nm\\
    &\leq& C_4^{r}t^{-\frac{m}{2}(1 - \frac{1}{r})r}\sum_{y\in V}|g(y)|^a\mu(y).
\eess
Combining this with \eqref{4.6,}, we get
\bess
\sum_{x\in V}\left[\sum_{y\in V}|P(t,x,y)g(y)|\mu(y)\right]^b\mu(x)\leq C_4^{b}t^{-\frac{m}{2}(1- \frac{1}{r})b}\|g\|_{\ell^a(V)}^b.
\eess
This implies  \eqref{2.9} holds.

If $b=1$, then by \eqref{4.4}, we have $a=1=r$.
Then by Lemma \ref{l3.9}, we deduce that

\bess
\left\|\sum_{y\in V}P(t,\cdot,y)g(y)\mu(y)\right\|_{\ell^1(V)}
\leq \|g\|_{\ell^1(V)}.
\eess

Therefore, we get the desired result \eqref{2.9}.

    We now complete the proof.
\end{proof}

\begin{remark}
 We also could use Riesz-Thorin interpolation theorem $(\text{see \cite{BL}})$  to prove Theorem \ref{t2.1.}.
\end{remark}

\subsection{Proof of Theorem \ref{t2.3}}
\begin{proof}[Proof of Theorem \ref{t2.3}]
    The proof of Theorem \ref{t2.3} follows almost entirely analogously to those of Theorems \ref{l3.6} and \ref{t2.1.}, with the sole distinction that we employ Propositions \ref{p2.4} and \ref{p2.7} instead of Propositions \ref{p2.3} and \ref{p2.6}. Naturally, this entails replacing all instances of Propositions \ref{p2.3} and \ref{p2.6} in the proofs of Theorems \ref{l3.6} and \ref{t2.1.} with Propositions \ref{p2.4} and \ref{p2.7}, respectively.
\end{proof}

\section{Proof of Theorem \ref{t2.2}}

Consider the following problem
\begin{equation}\label{3.26.}
    \left\{\begin{array}{l}
        u(t,x)=\sum\limits_{y\in V}P(t,x,y)u_0(y)\mu(y)+\int_0^t\sum\limits_{y\in V}P(t-s,x,y)v^p(s,y)\mu(y)ds,~0<t\le T, x\in V, \\
        v(t, x)=\sum\limits_{y \in V} P(t, x, y) v_0(y) \mu(y)+\int_0^t \sum\limits_{y \in V} P(t-s, x, y) u^q(s, y) \mu(y) d s,~0<t\le T, x\in V, \\
        u(0, x)=u_0(x), \quad x \in V, \\
        v(0, x)=v_0(x), \quad x \in V .
    \end{array}\right.
\end{equation}
In order to prove Theorem \ref{t2.2}, we neee the following results, which implies a solution to \eqref{1.4.} satisfies \eqref{3.26.}.
\begin{lem}\label{l3.12}
    Suppose $D_{\mu}<+\infty$ and $(u, v)$ is a solution to the problem \eqref{1.4.}. Then $(u,v)$ satisfies the problem \eqref{3.26.}. 
\end{lem}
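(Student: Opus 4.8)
The plan is to show that a solution $(u,v)$ of \eqref{1.4.} in the sense of the first Definition in \S2.4 automatically satisfies the Duhamel (mild) formulation \eqref{3.26.}, by applying Duhamel's principle to each equation of the system. Since the two equations are structurally identical, I will carry out the argument for $u$; the argument for $v$ is verbatim the same with $u^q$ replaced by $v^p$.

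First, fix $x\in V$ and $t\in(0,T]$. The key is that, because $D_\mu<\infty$, the Laplacian $\Delta$ is a bounded operator on $\ell^\infty(V)$, and the heat semigroup $(e^{s\Delta})_{s\ge 0}$ acting on $\ell^\infty(V)$ is given explicitly by the convergent power series $e^{s\Delta}w(x)=\sum_{k\ge 0}\frac{s^k}{k!}\Delta^k w(x)=\sum_{y\in V}P(s,x,y)w(y)\mu(y)$ (Proposition \ref{p2.2}), and it is differentiable in $s$ with $\frac{d}{ds}e^{s\Delta}w=\Delta e^{s\Delta}w=e^{s\Delta}\Delta w$. Now set, for $s\in[0,t]$,
\[
\Phi(s):=\sum_{y\in V}P(t-s,x,y)\,u(s,y)\,\mu(y)=\big(e^{(t-s)\Delta}u(s,\cdot)\big)(x).
\]
Since $u\in C([0,T];\ell^\infty(V))$ and, by the equation, $u_t=\Delta u+v^p$ with $v^p\in C([0,T];\ell^\infty(V))$ (note $v$ bounded $\Rightarrow v^p$ bounded), $u$ is in fact $C^1$ in $t$ with values in $\ell^\infty(V)$; together with boundedness of $\Delta$ and the semigroup this lets me differentiate $\Phi$ in $s$:
\[
\Phi'(s)=-\Delta e^{(t-s)\Delta}u(s,\cdot)(x)+e^{(t-s)\Delta}u_t(s,\cdot)(x)
=e^{(t-s)\Delta}\big(u_t-\Delta u\big)(s,\cdot)(x)=e^{(t-s)\Delta}v^p(s,\cdot)(x).
\]
Integrating from $s=0$ to $s=t$ gives $\Phi(t)-\Phi(0)=\int_0^t e^{(t-s)\Delta}v^p(s,\cdot)(x)\,ds$, i.e.
\[
u(t,x)-\sum_{y\in V}P(t,x,y)u_0(y)\mu(y)=\int_0^t\sum_{y\in V}P(t-s,x,y)v^p(s,y)\mu(y)\,ds,
\]
which is exactly the first line of \eqref{3.26.}; here I use $\Phi(t)=u(t,x)$ (since $e^{0\cdot\Delta}=\mathrm{Id}$) and $\Phi(0)=e^{t\Delta}u(0,\cdot)(x)=\sum_y P(t,x,y)u_0(y)\mu(y)$, plus Lemma \ref{t2.1} to justify that the $\ell^\infty$-valued integral may be evaluated pointwise at $x$. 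The last two lines of \eqref{3.26.} are just the initial conditions, already built into being a solution.

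The main obstacle — and the only place that needs care — is the rigorous justification of the differentiation of $\Phi(s)$ under the sum: one must check that $s\mapsto u(s,\cdot)$ is differentiable as an $\ell^\infty(V)$-valued map (not merely pointwise), which follows because $\Delta u(s,\cdot)$ and $v^p(s,\cdot)$ are continuous $\ell^\infty$-valued (boundedness of $\Delta$ on $\ell^\infty$ plus $D_\mu<\infty$), and that $s\mapsto e^{(t-s)\Delta}$ is differentiable in the operator norm on $\ell^\infty(V)$, which again uses $\|\Delta\|_{\ell^\infty\to\ell^\infty}\le 2D_\mu<\infty$ so that the exponential series and its term-by-term derivative converge uniformly. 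Once these two facts are in hand, the product rule for the Banach-space-valued map $s\mapsto e^{(t-s)\Delta}u(s,\cdot)$ applies, and evaluating at $x\in V$ (a bounded linear functional on $\ell^\infty(V)$) together with Lemma \ref{t2.1} completes the interchange of integral and evaluation. Repeating the identical computation with the roles of $u$ and $v$ swapped yields the second line of \eqref{3.26.}, finishing the proof.
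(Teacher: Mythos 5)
Your proposal is correct and follows essentially the same route as the paper: both proofs differentiate $s\mapsto\sum_{y}P(t-s,x,y)u(s,y)\mu(y)$ on $[0,t]$, use the heat equation for $P$ together with the equation $u_t=\Delta u+v^p$, and conclude by the fundamental theorem of calculus, the paper phrasing the key step as summation by parts in $y$ (moving $\Delta$ from $u$ onto the kernel) while you phrase it as commutation of the bounded operator $\Delta$ with the semigroup $e^{s\Delta}$ furnished by Proposition \ref{p2.2} under $D_\mu<\infty$. Your extra care about $C^1$-regularity of $u$ as an $\ell^{\infty}(V)$-valued map is a legitimate justification of the interchange that the paper handles by citing the integration-by-parts argument of \cite{LWcv}.
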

\begin{proof}
    It follows from \eqref{1.4.} that
    \bes
    \int_0^t \sum_{y\in V} P(t-s,x,y)u_s(s,y)\mu(y)ds&=&\int_0^t \sum_{y\in V} P(t-s,x,y)\Delta u(s,y)\mu(y)ds\nm\\[2mm]
    &&+\int_0^t \sum_{y\in V}P(t-s,x,y)v^p(s,y)\mu(y)ds\label{3.40}.
    \ees
    By integrating by parts(see page 10 in \cite{LWcv}), we get
    \bes
    \int_0^t \sum_{y\in V}P(t-s,x,y)\Delta u(s,y)\mu(y)ds&=&\int_0^t \sum_{y\in V}\Delta_{y}P(t-s,x,y)u(s,y) \mu(y)ds\nm\\[2mm]
    &=&\int_0^t \sum_{y \in V} P_t(t-s,x,y)u(s,y)\mu(y)ds.\label{3.41}
    \ees
    By Proposition \ref{p2.1}, we deduce that 
    \bes
    \sum_{y\in V}\int_0^t\frac{\partial}{\partial s}[P(t-s,x,y)u(s,y)]ds\mu(y)&=&\sum_{y\in V}[P(0,x,y)u(t,y)-P(t,x,y) u(0,y)]\mu(y)\nm\\[2mm]
    &=&u(t,x)-\sum_{y \in V} P(t,x,y)u_0(y)\mu(y).\label{3.42}
    \ees
    Combining \eqref{3.40}, \eqref{3.41} with \eqref{3.42}, we see that $u(t, x)$ satisfies the first equation in \eqref{3.26.}. Similarly, we could see that $v(t,x)$ satisfies the second equation in \eqref{3.26.}.
\end{proof}

\begin{proof}[Proof of Theorem \ref{t2.2}]
We claim that there exists $(u,v)\in C([0,T];\ell^{\infty}(V))\times C([0,T];\ell^{\infty}(V))$ so that $(u,v)$ satisfies \eqref{3.26.} for some $T>0$.

{\bf Step 1} . Suppose first that $p, q \ge 1$. We show that there admits a unique nonnegative solution of \eqref{3.26.}. Let $M>\max \left\{\left\|u_0\right\|_{\ell^{\infty}(V)},\left\|v_0\right\|_{\ell^{\infty}(V)}\right\}$. Then clearly, there exists a sufficiently small
$T=T(M, p, q)>0$ \text { such that }
\begin{equation}\label{3.28,}
\begin{aligned}
 \left\|u_0\right\|_{\ell^{\infty}(V)}+t M^p<M, \text{ and }
 \left\|v_0\right\|_{\ell^{\infty}(V)}+t M^q<M \text { for all } t \in[0, T] \text {. }
\end{aligned}\end{equation}
 Set $E_T=\left\{(u, v) \mid u, v \in C\left([0,T];\ell^{\infty}(V)\right),\|(u, v)\|<+\infty\right\}$ and
    $$A_{T}=\{(u,v)\in E_T: 0\le u\le M, 0\le v\le M\},$$ where
$$
\|(u, v)\|=\max_{t \in[0,T]}\left\{\|u(t,\cdot)\|_{\ell^{\infty}(V)}+\|v(t,\cdot)\|_{\ell^{\infty}(V)}\right\}.
$$
    It is easy to check that $E_{T}$ is a Banach space and $A_{T}$ is a closed subset of $E_{T}$.
For any $(u,v)\in A_{T}$, 
    define an operator $\psi(u, v):=\left(\Phi_1(v), \Phi_2(u)\right)$,
    where
   \bes
        &&\Phi_1(v)(t,x)=\sum_{y \in V} P(t, x, y) u_0(y) \mu(y)+\int_0^t \sum_{y \in V} P(t-s, x, y) v^p(s,y)\mu(y) ds, \label{3.18}\\
        &&\Phi_2(u)(t,x)=\sum_{y \in V} P(t, x, y) v_0(y) \mu(y)+\int_0^t \sum_{y \in V} P(t-s, x, y) u^q(s,y)\mu(y) ds.\label{3.19}
\ees
For $t \in[0, T), x \in V$ and any $\delta \in(0, T-t)$, by Proposition \ref{p2.2},
$$
\begin{aligned}
I(t, x)=I_{\delta}(t, x): & =\sum_{y \in V} P(t+\delta, x, y) u_0(y) \mu(y)-\sum_{y \in V} P(t, x, y) u_0(y) \mu(y) \\
    & =\sum_{k=0}^{+\infty} \frac{(t+\delta)^k\left(\Delta^k u_0\right)(x)}{k !}-\sum_{k=0}^{+\infty} \frac{t^k\left(\Delta^k u_0\right)(x)}{k !} \\
    & =\sum_{k=0}^{+\infty} \frac{(t+\delta)^k-t^k}{k !} \Delta^k u_0(x) .
\end{aligned}
$$
Recalling that $u_0$ is bounded, we may assume that 
\bes 
\left|u_0(x)\right| \leq B_1 \text{ on V }. \label{3.20}
\ees
Then
$$
\begin{aligned}
    |\Delta u_0(x)| & =\left|\frac{1}{\mu(x)} \sum_{y \sim x} \omega_{x y}\left(u_0(y)-u_0(x)\right)\right| \\
    & \leq 2 D_\mu B_1 .
\end{aligned}
$$
By induction, we have for any $k \in \mathbb{Z}^{+}$and $x \in V$,
$$
\left|\Delta^k u_0(x)\right| \le 2^k D_\mu^k B_1 .
$$
Thus for any $t\in[0,T),~x\in V$,
\bes\label{3.21}
|I_{\delta}(t, x)| &\leq& \sum_{k=0}^{+\infty} \frac{(t+\delta)^k-t^k}{k!}2^k D_\mu^kB_{1}\nm\\
    &=&B_{1}\left(e^{2D_\mu(t+\delta)}-e^{2D_\mu t}\right).
\ees
Hence, there exists a sufficiently small $\delta=\delta(\epsilon, T)>0$ so that
\bes
|I_{\delta}(t, x)|<\epsilon \label{3.31}
\ees
 for $t \in[0,T)$ and $x \in V$. By similar arguments as above, we can show that there exists $\delta=\delta(\epsilon)\ll1$ so that
\bes
\left|\sum\limits_{y\in V}P(T,x,y)u_0(y)\mu(y)-\sum_{y\in V}P(T-\delta,x,y)u_0(y)\mu(y)\right|<\epsilon.\label{3.32.}
\ees
Combining \eqref{3.31} with \eqref{3.32.}, one can obtain $$\sum\limits_{y\in V}P(t,x,y)u_{0}(y)\mu(y)\in C\left([0,T];\ell^{\infty}(V)\right).
$$
It follows that 
$$\sum\limits_{y\in V}P(t,\cdot,y)u_{0}(y)\mu(y) \text{ is strongly measurable w.r.t.} t\in[0,T]\text{ in }\ell^{\infty}(V).$$

Let $t\in[0,T),x\in V,~\delta\in(0,T-t)$. Define 
$$
J(t, x)=J(t,x;v)=\int_0^t \sum\limits_{y \in V} P(t-s,x,y) v^p(s, y) \mu(y) ds.
$$
Then
\bes
&&J(t+\delta,x)-J(t,x)\nm\\
&=&\int_0^{t+\delta}\sum_{y\in V}P(t+\delta-s,x,y)v^p(s,y)\mu(y)ds-\int_0^t \sum_{y \in V} P(t-s,x,y)v^p(s,y)\mu(y)ds\nm\\
&=&\int_0^{t+\delta} \sum_{y \in V} P(t+\delta-s, x, y) v^p(s, y) \mu(y) ds-\int_0^{t} \sum_{y \in V} P(t+\delta-s, x, y) v^p(s, y) \mu(y) d s\nm\\
&&+\int_0^{t} \sum_{y \in V} P(t+\delta-s, x, y) v^p(s, y) \mu(y) d s-\int_0^t \sum_{y \in V} P(t-s, x, y) v^p(s, y) \mu(y) d s\nm\\
&=&\int_0^{t} \sum_{y \in V} P(t+\delta-s, x, y) v^p(s, y) \mu(y)-\sum_{y \in V} P(t-s, x, y) v^p(s, y) \mu(y) ds\nm\\
&&+\int_t^{t+\delta} \sum_{y \in V} P(t+\delta-s, x, y) v^p(s, y) \mu(y) ds\nm.
\ees
Define
\bess I:=\int_0^{t} \sum_{y \in V} P(t+\delta-s, x, y) v^p(s, y) \mu(y)-\sum_{y \in V} P(t-s, x, y) v^p(s, y) \mu(y) ds\nm\\
\eess
and
\bess
II:=\int_t^{t+\delta} \sum_{y \in V} P(t+\delta-s, x, y) v^p(s, y) \mu(y) ds.
\eess
 Then by \eqref{2.3} and the mean value theorem, we have
\bes
I &=&\int_0^{t} \sum_{y \in V} P(t+\delta-s, x, y) v^p(s, y) \mu(y) d s-\sum_{y \in V} P(t-s, x, y) v^p(s, y) \mu(y)ds\nm\\
&=&\int_0^{t} \sum_{k=0}^{+\infty} \frac{(t+\delta-s)^k \Delta^k v^p(s, y)}{k !}-\sum_{k=0}^{+\infty} \frac{(t-s)^k \Delta^k v^p(s, y)}{k !} ds\nm\\
&=&\int_0^{t} \sum_{k=0}^{+\infty} \frac{(t+\delta-s)^k-(t-s)^k}{k !} \Delta^k v^p(s, y) ds\nm\\
&=& \int_{0}^{t} \sum_{k=1}^{\infty} \frac{k\theta^{k - 1}\delta}{k!} \Delta^{k} v^{p}(s, y)ds,\label{5.12}
\ees
where $\theta \in (t - s, t - s + \delta)$.
Let $$B_{2}:=\|v\|_{C\left([0,T];\ell^{\infty}(V)\right)}^p.$$ 
Direct calculation yields that
\bes
 \sum_{k=1}^{\infty} \frac{\theta^{k - 1}\delta}{(k - 1)!} \Delta^{k} v^{p}(s, y) 
&\leq&  \sum_{k=1}^{\infty} \frac{|\theta|^{k - 1}\delta}{(k - 1)!} 2^{k} D_{\mu}^{k} B_{2} \nm \\
&=&  \sum_{k=0}^{\infty} \frac{|\theta|^{k}\delta}{k!} 2^{k + 1} D_{\mu}^{k + 1} B_{2} \nm \\
&=&  e^{|\theta| 2 D_{\mu}} 2 D_{\mu} \delta B_{2}.  \label{5.13.}
\ees
Combining \eqref{5.12} with \eqref{5.13.}, we conclude that
$$I\le\int_{0}^{t} e^{|\theta| 2 D_{\mu}} 2 D_{\mu} \delta B_{2} ds\le e^{(t + \delta)2 D_{\mu}} 2 D_{\mu} \delta B_{2}t. $$
By Proposition \ref{p2.1} (iv), we have
\bess
|II| &=&\left|\int_t^{t+\delta} \sum_{y \in V} P(t+\delta-s, x, y) v^p(s, y) \mu(y) d s\right| \\
&\leq& \int_t^{t+\delta} \sum_{y \in V} P(t+\delta-s, x, y) \mu(y) B_2 d s \\
 &=&B_2 \delta.
    \eess
Hence, we deduce that there exists $\delta_{1}=\delta_{1}(\epsilon,T)>0$ such that
\begin{equation}\label{3.37,}
\|J(t+\delta,\cdot)-J(t,\cdot)\|_{\ell^{\infty}(V)}<\epsilon
\end{equation}
for all $\delta \in\left(0, \delta_1\right), t \in[0, T)$.
Similarly, we can find $\delta_2=\delta_2(\epsilon, T) \in\left(0, \delta_1\right)$ so that
\begin{equation}\label{3.38,}
\| J(T, \cdot)-J\left(T-\delta, \cdot\right)\|_{\ell^{\infty}(V)}<\epsilon
\end{equation}
for all $\delta \in\left(0, \delta_2\right)$. Therefore, we get $J$ is continuous from $[0,T]$ to $\ell^{\infty}(V)$. It follows that \begin{equation}\label{5.13}
    J(t,x) \text{~ is strongly measurable w.r.t.~ } t\in[0,T]\text{ in }\ell^{\infty}(V).
\end{equation}
Similarly, we may show that 
\begin{equation}\label{5.14}
    \int_0^t \sum\limits_{y \in V} P(t-s,x,y) u^q(s, y) \mu(y) ds \text{~ is strongly measurable w.r.t.~ } t\in[0,T]\text{ in  }\ell^{\infty}(V).
\end{equation}
is strongly measurable w.r.t. $t\in[0,T]$ in $\ell^{\infty}(V)$.
By \eqref{3.18}, \eqref{3.19}, Proposition \ref{p2.1} (iv) and Lemma \ref{t2.1}, we deduce that for any $t\in[0,T]$,
\bes\label{3.26}
\left\|\Phi_1(v)(t, \cdot)\right\|_{\ell^{\infty}(V)} &\le&\left\|u_0\right\|_{\ell^{\infty}(V)}+\int_0^t \sum_{y \in V} P(t-s, x, y)\|v(s, \cdot)\|_{\ell^{\infty}(V)}^p \mu(y) ds\nm \\
&\le&\left\|u_0\right\|_{\ell^{\infty}(V)}+\int_0^t \| v\left(s, \cdot\right) \|_{\ell^{\infty}(V)}^p ds, \nm\\
&\le&\left\|u_0\right\|_{\ell^{\infty}(V)}+\left(\max_{s \in[0, T]} \| v\left(s,\cdot\right) \|_{\ell^{\infty}(V)}\right)^pT\nm\\
&\le&\left\|u_0\right\|_{\ell^{\infty}(V)}+M^{p}T<\infty
\ees
and
\bes\label{3.27}
\left\|\Phi_2(u)(t, \cdot)\right\|_{\ell^{\infty}(V)} &\le&\left\|v_0\right\|_{\ell^{\infty}(V)}+\int_0^t \sum_{y \in V} P(t-s, x, y) \| u\left(s,\cdot\right)\|_{\ell^{\infty}(V)}^q \mu(y) ds\nm \\
&\le&\left\|v_0\right\|_{\ell^{\infty}(V)}+\int_0^t\|u(s, \cdot)\|_{\ell^{\infty}(V)}^q ds\nm \\
&\le&\left\|v_0\right\|_{\ell^{\infty}(V)}+\left(\max_{s \in[0, T]} \| u\left(s,\cdot\right) \|_{\ell^{\infty}(V)}\right)^qT\nm\\
&\le&\left\|v_0\right\|_{\ell^{\infty}(V)}+M^{q}T<\infty.\label{}
\ees
 Thus, we have
\bes\Phi_1(v)\in C\left([0, T]; \ell^{\infty}(V)\right).\label{3.28}
\ees 
Similarly, we may show that $\Phi_2(u) \in C\left([0, T]; \ell^{\infty}(V)\right)$
by virtue of \eqref{3.27}.    Furthermore, since
    $$
    \begin{aligned}
        \|\psi(u, v)\| & =\left\|\left(\Phi_1(v), \Phi_2(u)\right)\right\| \\
        & =\max_{t \in[0, T]}\left\{\left\|\Phi_1(v)(t, \cdot)\right\|_{\ell^{\infty}(V)}+\left\|\Phi_2(u)(t, \cdot)\right\|_{\ell^{\infty}(V)}\right\},
    \end{aligned}
    $$
    by \eqref{3.26} and \eqref{3.27}, one may get
 \bess
\|\psi(u,v)\|&\le& \left\|u_0\right\|_{\ell^{\infty}(V)}+\left\|v_0\right\|_{\ell^{\infty}(V)}+\int_0^t\|v(s, \cdot)\|_{\ell^{\infty}(V)}^p+\left\|u(s,\cdot) \right\|_{\ell^{\infty}(V)}^q d s \nm\\
&\leq&\|(u, v)\|+T\|(u, v)\|^{p}+T\|(u, v)\|^{q}<+\infty.
\eess
On the other hand, by \eqref{3.26}, \eqref{3.27} and \eqref{3.28,}, we can obtain 
\bes\label{3.39.}
\left|\Phi_1(v)(t,x)\right| \le M,~\left|\Phi_2(u)(t,x)\right| \le M
\ees
for $t \in[0, T]$, $x \in V$. Recalling that $u_0, v_0 \ge 0$ and $u, v \in A_T$, by the first two equations of \eqref{3.26.}, it can be concluded that $\Phi_1(v)(t, x), \Phi_2(u)(t, x) \ge 0$ for $t \in[0, T]$ and $x \in V$. Thus, combining this with \eqref{3.39.} yields
\begin{equation}\label{3.40.}
0 \leq \Phi_1(v)(t, x), \Phi_2(u)(t, x) \leq M \text { for } t \in[0, T] \text { and } x \in V \text {. }
\end{equation}
Therefore, we know that $\psi:A_{T}\to A_{T}$. 

For any $\left(u_1, v_1\right),\left(u_2, v_2\right) \in E_T$, by \eqref{3.18}, \eqref{3.19}, Lemma \ref{t2.1} and Proposition \ref{p2.1} (iv), one can deduce that
\bes
&&\quad\left\|\psi\left(u_1, v_1\right)-\psi\left(u_2, v_2\right)\right\|\nm\\[1mm]
&&=\left\|\left(\Phi_1\left(v_1\right), \Phi_2\left(u_1\right)\right)-\left(\Phi_1\left(v_2\right), \Phi_2\left(u_2\right)\right)\right\|\nm\\[1mm]
&&=\left\|\left(\Phi_1\left(v_1\right)-\Phi_1\left(v_2\right), \Phi_2\left(u_1\right)-\Phi_2\left(u_2\right)\right)\right\|\nm\\[1mm]
&&=\max _{t \in[0, T]}\left[\left\|\left(\Phi_1\left(v_1\right)-\Phi_1\left(v_2\right)\right)(t, \cdot)\right\|_{\ell^{\infty}(V)}+\left\|\left(\Phi_2\left(u_1\right)-\Phi_2\left(u_2\right)\right)(t, \cdot)\right\|_{\ell^{\infty}(V)}\right]\nm\\[1mm]
&&=\max _{t \in[0, T]}\left\{\left\|\int_0^t \sum_{y \in V} P(t-s, \cdot, y)\left[v_1^p(s, y)-v_2^p(s, y)\right] \mu(y) d s\right\|_{\ell^{\infty}(V)}\right.\nm\\
&&\left.+\left\|\int_0^t \sum_{y \in V} P(t-s, \cdot, y)\left[u_1^q(s, y)-u_2^q(s, y)\right] \mu(y) d s\right\|_{\ell^{\infty}(V)}\right\}\nm\\[1mm]
&&\leq \max _{t \in[0, T]}\left\{\int_0^t\left\|\sum_{y \in V} P(t-s,\cdot, y)\left[v_1^p(s, y)-v_2^p(s, y)\right] \mu(y)\right\|_{\ell^{\infty}(V)} d s\right.\nm\\[1mm]
&&\left.+\int_0^t\left\|\sum_{y \in V} P(t-s,\cdot, y)\left[u_1^q(s,y)-u_2^q(s, y)\right] \mu(y)\right\|_{\ell^{\infty}(V)} ds\right\}\nm\\[1mm]
&&\le\max_{t \in[0, T]}\left\{\int_0^t\left\|v_1^p\left(s,\cdot\right)-v_2^p(s, \cdot)\right\|_{\ell^{\infty}(V)}d s+\int_0^t\left\|u_1^q\left(s,\cdot\right)-u_2^q(s,\cdot)\right\|_{\ell^{\infty}(V)}ds\right\}.\label{3.22}
\ees
When $p> 1$, by the mean value theorem, we may conclude that
\bes
\left\|v_1^p(s, \cdot)-v_2^p(s, \cdot)\right\|_{\ell^{\infty}(V)} &&=\left\|p \theta^{p-1}(s, \cdot)\left(v_1-v_2\right)(s, \cdot)\right\|_{\ell^{\infty}(V)} \nm\\
        &&\leq\left\|p\left|v_1(s, \cdot)+v_2(s, \cdot)\right|^{p-1}\left(v_1-v_2\right)(s, \cdot)\right\|_{\ell^{\infty}(V)}\nm \\
&&\le p\left\|v_1\left(s,\cdot\right)+v_2(s, \cdot)\right\|_{\ell^{\infty}(V)}^{p-1}\left\|\left(v_1-v_2\right)\left(s, \cdot\right)\right\|_{\ell^{\infty}(V)},\label{3.23}
    \ees
where $\theta$ is between $v_1$ and $v_2$. Thus, we deduce that when $p\ge 1$, 
\bes
\left\|v_1^p(s, \cdot)-v_2^p(s, \cdot)\right\|_{\ell^{\infty}(V)} 
\le p\left\|v_1\left(s,\cdot\right)+v_2(s, \cdot)\right\|_{\ell^{\infty}(V)}^{p-1}\left\|\left(v_1-v_2\right)\left(s, \cdot\right)\right\|_{\ell^{\infty}(V)},\label{5.25}
\ees
    Similarly, we have
\bes
        \left\|u_1^q(s,\cdot)-u_2^q(s,\cdot)\right\|_{\ell^{\infty}(V)} \le q\left\|u_1(s, \cdot)+u_2(s, \cdot)\right\|_{\ell^{\infty}(V)}^{q-1}\left\|\left(u_1-u_2\right)(s, \cdot)\right\|_{\ell^{\infty}(V)}.\label{3.24} 
\ees
By \eqref{3.22}, \eqref{3.23} and \eqref{3.24}, we know that
\bes
&&\quad \left\|\left(\psi\left(u_1, v_1\right)-\psi\left(u_2, v_2\right)\right)\right\|\nm\\
&&\le\max _{t \in[0, T]}\left\{\int_0^t p\left\|v_1\left(s,\cdot\right)+v_2\left(s,\cdot\right)\right\|_{\ell^{\infty}(V)}^{p-1}\left\|\left(v_1-v_2\right)\left(s,\cdot\right)\right\|_{\ell^{\infty}(V)}ds\right. \nm\\
&&\qquad\qquad\left.+\int_0^t q\left\|u_1(s,\cdot)+u_2(s, \cdot)\right\|_{\ell^{\infty}(V)}^{q-1}\left\|\left(u_1-u_2\right)(s, \cdot)\right\|_{\ell^{\infty}(V)} ds\right\}\nm \\
&&\le\max_{t \in[0, T]}\left\{\int_0^tB_{p,q,M}
\left(\left\|\left(v_1-v_2\right)(s, \cdot)\right\|_{\ell^{\infty}(V)}+\left\|\left(u_1-u_2\right)(s,\cdot)\right\|_{\ell^{\infty}(V)}\right) d s\right\}\nm \\
&&\le TB_{p,q,M}\|\left(u_1, v_1\right)-\left(u_2, v_2\right) \|,\nm\label{3.25}
\ees
where 
$$B_{p,q,M}=\max\left\lbrace p(2M)^{p-1},q(2M)^{q-1}\right\rbrace.$$
    Clearly, there exists a sufficiently small $T_0>0$ so that $T\max\left\lbrace p(2M)^{p-1},q(2M)^{q-1}\right\rbrace<1$ for all $T<T_0$. By \eqref{3.25}, 
this implies that $\psi$ is a strict contraction map from $A_T$ to $A_T$ for all $T<T_0$.
    
Next we suppose that $T<T_0$. By Banach's Fixed Point Theorem, $\psi$ has a unique fixed point $(u,v)\in A_T$.  Hence $u,v\in C\left([0, T] ; \ell^{\infty}(V)\right)$ and $(u,v)$ satisfies \eqref{3.26.}.
It follows that $u(t,x), v(t,x)$ are continuous w.r.t. $t$ for all fixed $x\in V$. Taking the derivative of $t$ on both sides of the first two equations of \eqref{3.26.}, by similar arguments as in the proof of Theorem 3.2 in \cite{HuWang}, we discover $(u,v)$ is a solution of \eqref{1.4.}. Hence, by Lemma \ref{l3.12}, we know that $(u,v)$ is the unique nonnegative solution to \eqref{1.4.}.

{\bf Step 2}. We now suppose that $0<p<1, q \ge 1$. Clearly, we could choose a sequence of functions $\left\{g_n(x)\right\}_{n=1}^{+\infty}$ so that for each fixed $n\in\mathbb{Z}^{+}$, $g_{n}(x)$ is nondecreasing and globally Lipschitz continuous w.r.t. $x$ and 
\begin{equation*}
g_n(x)=
\begin{cases}
    0, & x\leq0 ,\\
    x^{p}, & x \ge \frac{1}{2n}.
\end{cases}
\end{equation*}
By similar arguments as in step 1, there exist $T>0$ and a nonnegative solution $
\left(u_n(t,x),v_n(t,x)\right)$ to
\begin{equation}\label{3.33}
\left\{\begin{array}{l}
u_{n}(t, x)=\sum\limits_{y \in V} P(t, x, y) u_0(y) \mu(y)+\int_0^t \sum\limits_{y \in V} P(t-s, x, y) g_{n}(v_{n}(s, y)) \mu(y) d s,~0<t<T, x\in V, \\
v_{n}(t, x)=\sum\limits_{y \in V} P(t, x, y)(v_0(y)+\frac{1}{n}) \mu(y)+\int_0^t \sum\limits_{y \in V} P(t-s, x, y) u_{n}^q(s, y) \mu(y) d s,~0<t<T, x\in V, \\
u_{n}(0, x)=u_0(x), \quad x \in V, \\
v_{n}(0, x)=v_0(x)+\frac{1}{n}, \quad x \in V
\end{array}\right.
\end{equation} 
satisfying
\begin{equation}\label{3.32}
	u_n(t,x),v_n(t,x)\in C\left([0,T];\ell^{\infty}(V)\right).
\end{equation}
and 
\begin{equation}\label{3.49,}
 0\le u_n(t,x),v_n(t,x)\le M \text { for } t \in [0,T] \text { and } x \in V.
\end{equation}
From \eqref{3.32} and Lemma \ref{l3.10}, we see that if $m \leq n$, then
\bes
u_m(t,x) \ge u_n(t,x), v_m(t,x) \ge v_n(t,x) \text { for } t \in [0,T] \text { and } x \in V.\label{3.34}
\ees
Thus, one may define
\bes
    u(t,x)=\lim _{n \rightarrow+\infty} u_n(t,x), v(t,x)=\lim _{n \rightarrow+\infty} v_n(t,x).\label{3.35}
\ees
Then by a similar discussion as in the proof of \eqref{3.37,} and \eqref{3.38,}, we may show that
for any \(t\in[0,T]\), there exists \(\delta = \delta(\epsilon)\) so that
for all \(\tau\in N_t(\delta)\),
\[
\left\lVert u_n(t,\cdot)-u_n(\tau,\cdot)\right\rVert_{\ell^\infty(V)}<\epsilon,\quad\forall n\geq1,
\]
where \(N_t(\delta)\) is a neighborhood of \(t\) with \(\delta\) as the radius. This implies that
 $\{u_n\}$ is an equicontinuous sequence in $C([0,T];\ell^{\infty}(V))$. Thus, by Theorem 5.1.4 in \cite{XMY}, we see that \begin{equation}
    u\in C([0,T];\ell^{\infty}(V)).
\end{equation}
 Similarly, we may show that
 \begin{equation}
    v\in C([0,T];\ell^{\infty}(V)).
\end{equation}
Thanks to $v_0(x)\ge 0$ for $x\in V$ and $u_n\ge 0$, by the second equality in \eqref{3.33}, we deduce that $v_n(t, x) \ge \frac{1}{2 n}$ for $t \in[0, T]$ and $x \in V$.
    Hence \begin{equation}\label{3.36}
        \left\{\begin{array}{l}
            u_n(t, x)=\sum\limits_{y \in V} P(t, x, y) u_0(y) \mu(y)+\int_0^t \sum\limits_{y \in V} P(t-s, x, y)v_{n}^{p}(s, y) \mu(y) d s,~0<t<T, x\in V, \\
            v_n(t, x)=\sum\limits_{y \in V} P(t, x, y)(v_0(y)+\frac{1}{n}) \mu(y)+\int_0^t \sum\limits_{y \in V} P(t-s, x, y) u_{n}^q(s, y) \mu(y) d s,~0<t<T, x\in V, \\
            u_n(0, x)=u_0(x), \quad x \in V, \\
            v_n(0, x)=v_0(x)+\frac{1}{n}, \quad x \in V
        \end{array}\right.
    \end{equation}
 for $n \ge 1$.
By \eqref{3.49,} and Proposition \ref{p2.1} (iv), we know that
\bes\sum\limits_{y\in V}P(t-s,x,y)v_n^p(s,y)\mu(y)\le M^p~\text{and}~ 
    \sum_{y\in V}P(t-s,x,y)u_n^q(s,y)\mu(y)\le M^q.\label{3.39}
\ees
Thus, letting $n \rightarrow+\infty$ in \eqref{3.36}, in view of \eqref{3.39}, by Lebesgue Dominated Convergence Theorem, we obtain $(u,v)$ satisfies \eqref{3.26.}.
 
By \eqref{3.35} and \eqref{3.49,}, one may get
    $$
0\le u(t,x)\le M~\text{and}~0\le v(t,x)\le M.
    $$
    In view of these, by similar arguments as in the proof of Theorem 3.2 in \cite{HuWang}, we see that $(u(t,x),v(t,x))$ is a solution to \eqref{1.4.}.
\end{proof}

\section{Proof of Theorem \ref{t2.5.}}
Without loss of generality, we assume $p\le q$ throughout this section.

To prove Theorem \ref{t2.5.}, we need the following result.
\begin{lem}\label{l6.1}
Let $G$ be a graph satisfying CDE$(n,0)$ and the condition {\bf (LVG$m$)}.
Assume
$$ \mu_{\min}>0,~ \mu_{\max}<\infty,~ \omega_{\min}>0~ \text{ and } D_{\mu}<\infty.$$ Let $p,q\ge 1, pq>1, p\le q, r_1\ge 1$, $r_2 \ge 1$, $T>0$. Suppose $(\tilde{u}(t,x),\tilde{v}(t,x))$ is a solution to \eqref{1.4.} for $t\in[0, T]$ and $x \in V$ satisfying
    \bes
    u_0 \in \ell^{r_1}(V), \quad v_0 \in \ell^{r_2}(V).\label{6.1}
    \ees
Assume $s_1\ge\max\{r_2,r_1\}$. Then there exists $T_1\le T$ such that
$$
\tilde{u}\in C\left(\left[0, T_1\right];\ell^{s_1}(V)\right)\text { and }\tilde{v}\in C\left(\left[0,T_1\right]; \ell^{s_1}(V)\right).
    $$
\end{lem}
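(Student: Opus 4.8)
The plan is to carry out a contraction-mapping argument for the integral system \eqref{3.26.} \emph{inside} the space $C([0,T_1];\ell^{s_1}(V))^2$ for a sufficiently small $T_1\le T$, produce a solution of \eqref{1.4.} on $[0,T_1]$ whose components lie in $\ell^{s_1}(V)$, and then identify it with $(\tilde u,\tilde v)$ by uniqueness, which forces $\tilde u,\tilde v\in C([0,T_1];\ell^{s_1}(V))$. By Lemma \ref{l3.12}, $(\tilde u,\tilde v)$ already satisfies \eqref{3.26.}. Since $\mu_{\min}>0$ and $s_1\ge\max\{r_1,r_2\}$, Lemma \ref{l3.5} gives $u_0\in\ell^{r_1}(V)\subset\ell^{s_1}(V)$ and $v_0\in\ell^{r_2}(V)\subset\ell^{s_1}(V)$, while Lemma \ref{l3.4} gives $\ell^{s_1}(V)\subset\ell^{\infty}(V)$ with $\|w\|_{\ell^{\infty}(V)}\le\mu_{\min}^{-1/s_1}\|w\|_{\ell^{s_1}(V)}$.

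The estimates needed are elementary. For $w\ge 0$ in $\ell^{s_1}(V)$ and $p\ge 1$ one has $\|w^{p}\|_{\ell^{s_1}(V)}\le\|w\|_{\ell^{\infty}(V)}^{p-1}\|w\|_{\ell^{s_1}(V)}\le\mu_{\min}^{-(p-1)/s_1}\|w\|_{\ell^{s_1}(V)}^{p}$, and similarly with $q$; moreover, by the mean value theorem, $\|w_1^{p}-w_2^{p}\|_{\ell^{s_1}(V)}\le p\bigl(\|w_1\|_{\ell^{\infty}(V)}+\|w_2\|_{\ell^{\infty}(V)}\bigr)^{p-1}\|w_1-w_2\|_{\ell^{s_1}(V)}$ for nonnegative $w_1,w_2\in\ell^{s_1}(V)$, and likewise with $q$. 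Feeding these into the representation \eqref{3.18}--\eqref{3.19} of $\psi(u,v)=(\Phi_1(v),\Phi_2(u))$, using that the heat semigroup is an $\ell^{s_1}(V)$-contraction (Lemma \ref{l3.9}), that $\bigl\|\int_0^t f\,ds\bigr\|_{\ell^{s_1}(V)}\le\int_0^t\|f\|_{\ell^{s_1}(V)}\,ds$ (Lemma \ref{t2.1}), and that $\Delta$ is bounded on $\ell^{s_1}(V)$ because $D_\mu<\infty$ (so that, via the power series of Proposition \ref{p2.2}, $t\mapsto\sum_{y}P(t,\cdot,y)w(y)\mu(y)$ and the Duhamel terms are continuous into $\ell^{s_1}(V)$, by the same $I_\delta$- and $J$-estimates as in the proof of Theorem \ref{t2.2}), one checks that $\psi$ maps the closed set $\{(u,v)\in C([0,T_1];\ell^{s_1}(V))^2:\ u,v\ge 0,\ \|u\|,\|v\|\le R\}$ into itself and is a strict contraction there, once $R$ is fixed large (depending on $\|u_0\|_{\ell^{r_1}(V)},\|v_0\|_{\ell^{r_2}(V)},\mu_{\min}$) and $T_1\le T$ is taken small (depending on $R,p,q,\mu_{\min}$); here nonnegativity is preserved because $P>0$ and $u_0,v_0\ge 0$. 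The computation is word-for-word \eqref{3.22}--\eqref{3.25} with $\ell^{\infty}(V)$ replaced by $\ell^{s_1}(V)$ in the factors measuring the difference of iterates. Banach's fixed point theorem then yields a fixed point $(u_*,v_*)$.

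Finally, $(u_*,v_*)\in C([0,T_1];\ell^{s_1}(V))^2\subset C([0,T_1];\ell^{\infty}(V))^2$ satisfies \eqref{3.26.} on $[0,T_1]$, so differentiating in $t$ as in the proof of Theorem \ref{t2.2} shows it solves \eqref{1.4.} on $[0,T_1]$; and $(\tilde u,\tilde v)|_{[0,T_1]}$ is another $C([0,T_1];\ell^{\infty}(V))^2$ solution with the same initial data. Modifying $g(v)=v^{p}$ outside a bounded interval containing the ranges of $\tilde v|_{[0,T_1]}$ and $v_*$ so that it becomes nondecreasing and globally Lipschitz (harmless, both functions being bounded), the comparison principle Lemma \ref{l3.10} applied in both directions forces $\tilde u\equiv u_*$ and $\tilde v\equiv v_*$ on $[0,T_1]$, whence $\tilde u,\tilde v\in C([0,T_1];\ell^{s_1}(V))$. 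I expect the main obstacle to be precisely the reason one cannot estimate $(\tilde u,\tilde v)$ directly: a priori we only know $\tilde u,\tilde v\in C([0,T];\ell^{\infty}(V))$, and since $\ell^{\infty}(V)\not\subset\ell^{s_1}(V)$, the Duhamel term $\int_0^t\sum_y P(t-s,\cdot,y)\tilde v^{p}(s,y)\mu(y)\,ds$ cannot be bounded in $\ell^{s_1}(V)$ until $\tilde v(s,\cdot)\in\ell^{s_1}(V)$ is already known; the coupled iteration above is what breaks this circularity, and care is only needed to choose a single $T_1$ small enough for both the $\ell^{s_1}(V)$-contraction and the application of the comparison principle.
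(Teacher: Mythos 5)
Your proposal is correct and follows essentially the same route as the paper: a contraction-mapping construction of a solution in $C([0,T_1];\ell^{s_1}(V))^2$ via the integral system \eqref{3.26.}, followed by identification with $(\tilde u,\tilde v)$ through uniqueness, which is exactly how the paper breaks the circularity you point out. The only cosmetic differences are that the paper works in the set $\{0\le u,v\le M\}$ and proves $\|\Delta u_0\|_{\ell^{s_1}(V)}\le D_\mu(1+d_m)\|u_0\|_{\ell^{s_1}(V)}$ by hand via a uniform degree bound, whereas you invoke the $\ell^{s_1}$-boundedness of $\Delta$ coming from $D_\mu<\infty$ directly and carry out the identification with the comparison principle (Lemma \ref{l3.10}) rather than by citing the uniqueness argument from the proof of Theorem \ref{t2.2}.
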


\begin{proof}
{\bf Step 1}. Let $M>\max \left\{\left\|u_0\right\|_{\ell^{\infty}(V)},\left\|v_0\right\|_{\ell^{\infty}(V)}\right\}$. Then clearly, there exists a sufficiently small
$T=T(M, p, q)>0$ \text { such that }
\begin{equation}
    \begin{aligned}
        \left\|u_0\right\|_{\ell^{\infty}(V)}+t M^p<M, \text{ and }
        \left\|v_0\right\|_{\ell^{\infty}(V)}+t M^q<M \text { for all } t \in[0, T] \text {. }
\end{aligned}\end{equation}
 Set $E_T=\left\{(u,v)|~u\in C\left([0,T];\ell^{s_1}(V)\right),v\in C\left([0,T];\ell^{s_1}(V)\right),\|(u, v)\|<+\infty\right\}$ and
    $$A_{T}=\{(u,v)\in E_T: 0\le u\le M,~0\le v\le M\},$$ where
\bes
\|(u,v)\|=\max_{t\in[0,T]}\left\{\|u(t,\cdot)\|_{\ell^{s_1}(V)}+\|v(t,\cdot)\|_{\ell^{s_1}(V)}\right\}.\label{6.2.}
\ees
    It is easy to check that $E_{T}$ is a Banach space and $A_{T}$ is a closed subset of $E_{T}$.
    For any $(u,v)\in A_{T}$, 
    recalling that the operator $\psi$ is defined as $\psi(u, v):=\left(\Phi_1(v), \Phi_2(u)\right)$, where $\Phi_1(v), \Phi_2(u)$ are defined by \eqref{3.18} and \eqref{3.19} respectively. 

{\bf Step 2}. We will show that $\deg(x)<\infty$ for all $x\in V$.

Due to $\mu_{\max}<\infty$, we see that
\[
\frac{m(x)}{\mu_{\max}}\leq\frac{m(x)}{\mu(x)}\leq D_{\mu}
\]
and hence that
\[
m(x)\leq\mu_{\max}D_{\mu}.
\]
On the other hand,
\[
m(x):=\sum_{y\sim x}\omega_{xy}>\omega_{\min}\sum_{y\sim x}1 = \omega_{\min}\deg(x).
\]
Therefore, we get
\[
\deg(x)<\frac{m(x)}{\omega_{\min}}\leq\frac{\mu_{\max}D_{\mu}}{\omega_{\min}}<\infty~\text{ for all }~x\in V.
\]

{\bf Step 3}. Fix $\delta>0$. By \eqref{6.1} and Lemma \ref{l3.5}, we deduce $u_0$, $v_0\in \ell^{\infty}(V)$. Hence, by Proposition \ref{p2.2} and the mean value theorem, we know that
    \bes
    &&\sum_{y \in V} P(t+\delta, x, y) u_{0}(y) \mu(y)-\sum_{y \in V} P(t, x, y) u_0(y) \mu(y)\nm\\
    &=&\sum_{k=0}^{+\infty} \frac{(t+\delta)^k-t^k}{k !} \Delta^k u_0(x)\nm\\
    &=&\sum_{k=1}^{\infty} \frac{k \theta^{k-1} \delta}{k !} \Delta^k u_0(x)\nm\\
    &=&\delta \sum_{k=0}^{\infty} \frac{\theta^k}{k !} \Delta^k\left(\Delta u_0\right)(x)\nm\\
    &=&\delta \sum_{y \in V} P(\theta,x,y) \Delta u_0(y)\mu(y)\label{6.2}
    \ees
for  $t\ge 0$, where $\theta=\theta(t)\in (t,t+\delta)$. By Lemma \ref{l3.9}, whenever $\Delta u_0\in \ell^{s_1}(V)$, this implies that
    \bes
    &&\left\|\sum_{y\in V}P(t+\delta,x,y)u_0(y)\mu(y)-\sum_{y\in V} P(t,x,y)u_0(y)\mu(y)\right\|_{\ell^{s_1}(V)}\nm\\
    &\le&\delta\left\|\Delta u_0(\cdot)\right\|_{\ell^{s_1}(V)}.\label{6.4.}
    \ees
We next show that $\Delta u_0\in \ell^{s_1}(V)$.  By the definition of $\Delta$, we know that
    \bes
    \left\|\Delta u_0(x)\right\|_{\ell_{x}^{s_1}(V)} &\le&\left\|\sum_{y \sim x} u_0(y) \frac{\omega_{y x}}{\mu(x)}-\sum_{y \sim x} \frac{\omega_{y x}}{\mu(x)} u_0(x)\right\|_{\ell_{x}^{s_1}(V)}\nm\\
    &\le&\left\|\sum_{y \sim x} u_0(y) \frac{\omega_{y x}}{\mu(x)}\right\|_{\ell_{x}^{s_1}(V)}+\left\|\sum_{y \sim x} \frac{\omega_{y x}}{\mu(x)} u_0(x)\right\|_{\ell_{x}^{s_1}(V)}.\label{6.4}
    \ees
    For any $x \in V$, there exists $y_x$ such that $u_0\left(y_x\right)=\max\limits_{y\in V: y \sim x} u_0$. Direct calculations yield that
    \bes
    \left\|\sum_{y \sim x} u_0(y) \frac{\omega_{y x}}{\mu(x)}\right\|_{\ell^{s_1}_{x}(V)} &\le&\left\|u_0\left(y_x\right) \sum_{y \sim x} \frac{\omega_{y x}}{\mu(x)}\right\|_{\ell_x^{s_1}(V)}\nm\\
    &\le&D_\mu\left\|u_0\left(y_x\right)\right\|_{\ell_x^{s_1}(V)}\nm\\
    &\le&D_\mu\left[\left(\sup_{x \in V} \operatorname{deg}(x)+1\right)\sum_{x\in V}u_0^{s_1}(x)\right]^{\frac{1}{s_1}}\nm\\
    &\le&D_\mu\left(\sup_{x \in V} \text{deg}(x)+1\right)^{\frac{1}{s_1}}\left\|u_0\right\|_{\ell^{s_1}(V)}\nm\\
    &=&D_\mu d_m\left\|u_0\right\|_{\ell^{s_1}(V)}\label{6.5}
    \ees
    and 
    \begin{equation}\label{6.6}
        \left\|\sum_{y\sim x} \frac{\omega_{y x}}{\mu(x)} u_0(x)\right\|_{\ell_x^{s_1}(V)} \le D_{\mu}\left\|u_0\right\|_{\ell^{s_1}(V)},
    \end{equation}
    where $d_{m}:=\left(\sup\limits_{x\in V}\text{deg}(x)+1\right) ^{\frac{1}{s_1}}$. It follows from \eqref{6.4}, \eqref{6.5} and \eqref{6.6} that 
    \bes
    \left\|\Delta u_0\right\|_{\ell^{s_1}(V)}\le D_{\mu}(1+d_{m})\|u_0\|_{\ell^{s_1}(V)}.\label{6.7}
    \ees
    Define 
    $$K(t,x):=\sum_{y \in V} P(t, x, y) u_0(y) \mu(y).$$ From \eqref{6.4.}-\eqref{6.7}, we see that 
\begin{equation}\label{6.9.}
\|K(t+\delta,\cdot)-K(t,\cdot)\|_{\ell^{s_1}(V)}\le\delta D_{\mu}(1+d_{m})\left\|u_{0}\right\|_{\ell^{s_1}(V)}
\end{equation} 
for all $t\ge 0$ and $x\in V$.
By \eqref{6.9.}, we know
\bes
    K:[0,T]\to \ell^{s_1}(V) \text{~is continuous~}.\label{6.11}
\ees
Recalling that $s_1 \ge r_1 \ge 1$ and $u_0 \in \ell^{r_1}(V)$, by Lemma 3.9 and Lemma 3.5, we see that $u_0\in\ell^{s_1}(V)$ and there exists a constant $\bar{C}>0$ so that
    $$
    \max_{t\in[0,T]}\left\|K(t,\cdot)\right\|_{\ell^{s_1}(V)}=\max_{t\in[0,T]}\left\|\sum_{y \in V} P(t, x, y) u_0(y) \mu(y)\right\|_{\ell^{s_1}_{x}(V)} \leq\left\|u_0\right\|_{\ell^{s_1}(V)} \leq\bar{C}\left\|u_0\right\|_{\ell^{r_1}(V)}<+\infty.
    $$
Combining this with \eqref{6.9.}, one may obtain 
    \bes
    K\in C\left([0, T];\ell^{s_1}(V)\right).\label{6.11.}
    \ees
    
{\bf Step 4}. By Lemma \ref{l3.4}, we see that
\bes
C\left([0,T];\ell^{s_1}(V)\right)\subset C\left([0,T];\ell^{\infty}(V)\right)\label{}
\ees
and hence that for any $u,v\in A_{T}$, we have
\bes
u, v \in C\left([0,T];\ell^{\infty}(V)\right).\label{A}
\ees
For any fixed $t\in[0,T)$, $\delta\in(0,T-t]$. 
Recalling that $p\ge 1$ and $v \in C\left([0,T]; \ell^{s_1}(V)\right)$, by Lemma \ref{l3.5}, we see that there exists a constant $C>0$ so that
\bes
\left\|v^p\right\|_{C\left([0, T] ; \ell^{s_1}(V)\right)} \le\|v\|_{C\left([0,T];\ell^{ps_1}(V)\right)}^p \le C\|v\|_{C\left([0,T];\ell^{s_1}(V)\right)}^p.\label{6.13}
\ees
When $p>1$, we have
\bess
\left\|v^p(t+\delta,\cdot)-v^p(t, \cdot)\right\|_{\ell^{s_1}(V)}&=&\left\|p\theta^{p-1}(t, \cdot)[v(t+\delta, \cdot)-v(t, \cdot)]\right\|_{\ell^{s_1}(V)}\nm\\
&\le&p\left(2\|v\|_{C\left([0,T]; \ell^{\infty}(V)\right)}\right)^{p-1}\|v(t+\delta, \cdot)-v(t, \cdot)\|_{\ell^{s_1}(V)} \label{}
\eess
by the mean value theorem and \eqref{A}, where $\theta(t, \cdot)$ is between $v(t+\delta, \cdot)$ and $v(t, \cdot)$. Hence, we know that when $p\ge 1$,
\begin{equation}\label{6.16}
\left\|v^p(t+\delta,\cdot)-v^p(t, \cdot)\right\|_{\ell^{s_1}(V)}
\le C^{*}_{1}\|v(t+\delta, \cdot)-v(t, \cdot)\|_{\ell^{s_1}(V)},
\end{equation}
where  
$$C^{*}_{1}=\max\left\{1,p\left(2\|v\|_{C\left([0,T]; \ell^{\infty}(V)\right)}\right)^{p-1}\right\}.$$
 Therefore 
\bess
v^{p}(t,\cdot) \text{ is  continuous w.r.t. } t\in[0,T) \text{ in } \ell^{s_{1}}(V).
\eess
By similar arguments as above, we may show that $v^{p}(t,\cdot) \text{ is  continuous at } t=T \text{ in } \ell^{s_{1}}(V).$
Hence, we get
\bess
v^{p}(t,\cdot) \text{ is  continuous w.r.t. } t\in[0,T] \text{ in } \ell^{s_{1}}(V).
\eess
Combining this with \eqref{6.13}, we see
\bes
v^{p}\in C\left([0,T];\ell^{s_1}(V)\right).\label{6.15.}
\ees

{\bf Step 5}. For any fixed $t\in(0,T]$, $s_1,s_2\in[0,t]$,  and $x\in V$. Direct calculations yield that
    \bes
    && \sum_{y \in V} P(t-s_1, x, y) v^p(s_1, y)\mu(y)-\sum_{y \in V} P(t-s_2, x, y) v^p(s_2, y)\mu(y) \nm\\
  &\le&\left[ \sum_{y \in V} P(t-s_1,x,y) v^p(s_1, y) \mu(y)-\sum_{y \in V} P(t-s_2,x,y) v^p(s_1, y) \mu(y)\right] \nm\\
    &&+\left[\sum_{y \in V} P(t-s_2, x, y) v^p(s_1, y) \mu(y)-\sum_{y \in V} P\left(t-s_2, x, y\right) v^p(s_2, y)\mu(y)\right] \nm\\
    &=:&I(t,s_1,s_2,x)+II(t,s_1,s_2,x).\label{6.16..}
\ees
By \eqref{6.15.} and Lemma \ref{l3.9}, we deduce that 
\bes
I(t,s_1,s_2,\cdot)~\text{ and }~II(t,s_1,s_2,\cdot)\in \ell^{s_1}(V).
\ees
Hence by triangle inequality, we know 
    
\bes
&&\left\| \sum_{y \in V} P(t-s_1, x, y) v^p(s_1, y)\mu(y)-\sum_{y \in V} P(t-s_2, x, y) v^p(s_2, y)\mu(y) \right\|_{\ell^{s_1}_{x}(V)}\nm\\
&\le&\left\|I(t,s_1,s_2,\cdot)\right\|_{\ell^{s_1}(V)}+\left\|II(t,s_1,s_2,\cdot)\right\|_{\ell^{s_1}(V)}.\label{6.17}
\ees
    Through discussions similar to \eqref{6.2}-\eqref{6.7}, one may deduce that there exists $\hat{C}_1>0$ so that
    \bes
\|I(t,s_1,s_2,\cdot)\|_{\ell^{s_1}(V)}&=&|s_2-s_1|\left\|\sum_{y\in V}P(\theta,x,y)\Delta v^p(s_1,y)\mu(y)\right\|_{\ell^{s_1}(V)}\nm\\
    &\le&|s_2-s_1|\left\|\Delta v^p(s_1, \cdot)\right\|_{\ell^{s_1}(V)}\nm\\
    &\le&|s_2-s_1| D_\mu\left(1+d_m\right)\left\|v^p(s_1,\cdot)\right\|_{\ell^{s_1}(V)}\nm\\
    &=&|s_2-s_1| D_\mu{ }\left(1+d_m\right)\|v(s_1,\cdot)\|_{\ell^{ps_1}(V)}^p\nm\\
    &\le&\hat{C}_{1}|s_2-s_1| D_\mu\left(1+d_m\right)\left\|{v(s_1, \cdot)}\right\|_{\ell^{s_1}(V)}^p\nm\\
    &\le&\hat{C}_{1}|s_2-s_1| D_\mu\left(1+d_m\right)\|v\|_{C\left([0,T];\ell^{s_1}(V)\right)}^p\nm\\
    &=:&|s_2-s_1| C_{15},\label{6.12}
    \ees
    where $\theta>0$ is between $t-s_1$ and $t-s_2$, which is independent of $x$ and $y$, and $$C_{15}=\hat{C}_{1}D_\mu\left\|v\right\|_{C([0,T];\ell^{s_1}(V))}^p\left(1+d_m\right).$$
By \eqref{6.15.} and Lemma \ref{l3.9}, arguing as in \eqref{6.16}, one has there exists $C_{2}^{*}>0$ so that
    \bes
\|II(t,s_1,s_2,\cdot)\|_{\ell^{s_1}(V)}&\leq& \left\|v^p(s_1, \cdot)-v^p(s_2, \cdot)\right\|_{\ell^{s_1}(V)}\nm\\
&\le&C_{2}^{*} \left\|v(s_1, \cdot)-v(s_2, \cdot)\right\|_{\ell^{s_1}(V)}.
\label{6.21}
    \ees
It follows from \eqref{6.17}, \eqref{6.12}, \eqref{6.21} and \eqref{6.15.}  
   that for any $t\in(0,T]$, 
    \bess
    \sum\limits_{y\in V}P(t-s,\cdot,y)v^p(s,y)\mu(y)\text{ is continuous w.r.t.~} s\in[0,t] \text{ in } \ell^{s_1}(V).
    \eess
Thus, we know that for any $t\in(0,T]$,
\bes 
\sum\limits_{y\in V}P(t-s,\cdot,y)v^p(s,y)\mu(y)\text{ is strongly measurable w.r.t.~} s\in[0,t] \text{ in } \ell^{s_1}(V).\label{6.26..}
\ees

{\bf Step 5}. For $t_1,t_2\in[0,T]$ and $x\in V$. Without loss of generality, we assume $t_2<t_1$.
Define
    \bes
M(t,x):=\int_0^{t_1}\sum_{y\in V}P(t_1-s,x,y)v^p(s,y)\mu(y)ds-\int_0^{t_2} \sum_{y \in V} P(t_2-s,x,y)v^p(s,y)\mu(y)ds.\nm
    \ees
Hence, by \eqref{6.26..} and Lemma \ref{t2.1}, we have
    \bes
    &~&\left\| M(t,\cdot)\right\|_{\ell^{s_1}(V)}\nm\\
    &\le&
    \left\|\int_0^{t_2}\sum_{y \in V} P(t_1-s,\cdot, y) v^p(s, y) \mu(y)-\sum_{y \in V} P(t_2-s,\cdot, y) v^p(s, y) \mu(y)ds\right.\nm\\
    &&\left.+\int_{t_2}^{t_1}\sum_{y \in V} P(t_1-s,\cdot,y) v^p(s,y) \mu(y)ds\right\|_{\ell^{s_1}(V)}\nm\\
&\le&\int_0^{t_2}\left\|\sum_{y \in V} P(t_1-s,\cdot,y)v^p(s,y)\mu(y)-\sum_{y\in V}P(t_2-s,\cdot, y) v^p(s, y) \mu(y)\right\|_{\ell^{s_1}(V)}ds\nm\\
    &&+\int_{t_2}^{t_1}\left\| \sum_{y \in V} P(t_1-s,\cdot, y) v^p(s, y) \mu(y)\right\|_{\ell^{s_1}(V)}ds.\label{6.15}
    \ees
    By similar arguments as in the proof of \eqref{6.12}, there exists $C_{16}>0$ so that
    \bes
    &&\left\|\sum_{y \in V} P(t_1-s,\cdot, y) v^p(s, y) \mu(y)-\sum_{y \in V} P(t_2-s,\cdot, y) v^p(s, y) \mu(y)\right\|_{\ell^{s_1}(V)}\nm\\
    &\le&(t_1-t_2) C_{16}.\label{6.16.}
    \ees
    By \eqref{6.15}, \eqref{6.16.} and Lemma \ref{l3.9}, we see that 
    \bess
    \left\| M(t,\cdot)\right\|_{\ell^{s_1}(V)}&\le&\int_0^{t_2}(t_1-t_2) C_{16}ds+\int_{t_2}^{t_1}\left\|v^{p}(s,\cdot)\right\|_{\ell^{s_1}(V)} ds\nm\\
    &\le&(t_1-t_2)TC_{16}+\int_{t_2}^{t_1}\left\|v(s,\cdot)\right\|^{p}_{\ell^{ps_1}(V)}ds.
    \eess
Thanks to $ps_1\ge s_1$, by Lemma \ref{l3.5}, it follows that there exists $C_{*}>0$ so that
    \bess
    \left\| M(t,\cdot)\right\|_{\ell^{s_1}(V)}&\le&(t_1-t_2) TC_{16}+\int_{t_2}^{t_1}C_{*}\left\|v(s,\cdot)\right\|^{p}_{\ell^{s_1}(V)}ds\nm\\
    &\le&(t_1-t_2) TC_{16}+(t_1-t_2)C_{*}\left\|v\right\|^{p}_{C\left([0,T];\ell^{s_1}(V)\right)}.
    \eess
It follows that 
\bes\label{6.35}
\int_0^t \sum\limits_{y \in V} P(t-s,\cdot,y)v^p(s,y)\mu(y)ds \text{ is continuous w.r.t.}~t\in[0,T]~\text{in~} \ell^{s_{1}}(V).
\ees

{\bf Step 7}. Due to $s_1 p \ge s_1$, in view of \eqref{6.26..}, by Lemma \ref{l3.5} and Theorem \ref{t2.1}, we know that there exists $\hat{C}_{16}>0$ so that
\bess
    \left\|\int_0^t \sum_{y \in V} P(t-s, \cdot, y) v^p(s, y) \mu(y) d s\right\|_{\ell^{s_1}(V)} &\leq& \int_0^t\left\|\sum_{y \in V} P\left(t-s,\cdot,y\right) v^p(s, y)\mu(y)\right\|_{\ell^{s_1}(V)} ds\nm\\
    &\leq&\int_0^t\left\|v^p(s,\cdot)\right\|_{\ell^{s_1}(V)} ds\nm\\
    &\leq& \int_0^t\|v(s,\cdot)\|_{\ell^{s_1p}(V)}^p ds\nm\\
    &\leq&\hat{C}_{16} \int_0^t\left\|v\left(s,\cdot\right)\right\|_{\ell^{s_1}(V)}^p ds\nm\\
    &\leq&\hat{C}_{16}\|v\|_{C\left([0, T];\ell^{s_1}(V)\right) }^pT<+\infty.
    \eess
It follows that 
\bes
\max_{t\in[0,T]}\left\|\int_0^t\sum_{y\in V}P(t-s,\cdot,y)v^p(s,y)\mu(y)ds\right\|_{\ell^{s_1}(V)}\leq+\infty.
\ees
Combining this with \eqref{6.35}, we conclude 
    \begin{equation}\label{6.37.}
        J(t,\cdot):=\int_{0}^{t}\sum_{y \in V} P(t-s,\cdot,y)v^p(s,y)\mu(y)ds\in C\left([0,T];\ell^{s_1}(V)\right) \text {. }
    \end{equation}

By \eqref{3.18}, \eqref{6.11.} and \eqref{6.37.}, we see that 
    \begin{equation}\label{6.38}
        \Phi_1(v) \in C\left([0, T];\ell^{s_1}(V)\right).
    \end{equation}
Similarly, since $s_1\ge r_2\ge 1$, we could show that 
    \begin{equation}\label{6.39}
        \Phi_2(u) \in C\left([0, T];\ell^{s_1}(V)\right).
    \end{equation}
By \eqref{6.2.}, \eqref{6.38} and \eqref{6.39}, we have 
    \begin{equation*}
        \|\psi(u, v)\| \le\left\|\Phi_1(v)\right\|_{C\left([0,T];\ell^{s_1}(V)\right)}+\left\|\Phi_2(u)\right\|_{C\left([0, T] ; \ell^{s_1}(V)\right)}.
    \end{equation*}
By similar arguments as in the proof of \eqref{3.40.}, one may see that  
\begin{equation*}\label{}
    0 \leq \Phi_1(v)(t, x), \Phi_2(u)(t, x) \leq M \text { for } t \in[0, T] \text { and } x \in V \text {. }
\end{equation*}
Therefore, we know that 
    \begin{equation*}
        \psi: A_T\to A_T.
    \end{equation*}

{\bf Step 8}. By \eqref{6.26..}, we know that for all $t\in(0,T]$,
$\sum\limits_{y \in V} P(t-s, \cdot, y)\left[v_1^p(s, y)-v_2^p(s, y)\right] \mu(y)$ is strongly measurable w.r.t. $s \in[0, t]$ in $\ell^{s_1}(V)$. Similarly, $\sum\limits_{y \in V} P(t-s, \cdot, y)\left[u_1^q(s, y)-u_2^q(s, y)\right] \mu(y)$ is strongly measurable w.r.t. $s \in[0, t]$ in $\ell^{s_1}(V)$. Hence, by Lemma \ref{t2.1}, one could conclude that
    \bes
    &&\left\|\psi\left(u_1, v_1\right)-\psi\left(u_2, v_2\right)\right\|\nm\\
    & \le& \max _{t \in[0, T]}\left\lbrace  \int_0^t\left\|\sum_{y \in V} P(t-s,\cdot, y)\left|v_1^p(s, y)-v_2^p(s, y)\right| \mu(y)\right\|_{\ell^{s_1}(V)} d s \right.\nm\\
    &&\left.\quad+\int_0^t\left\|\sum_{y \in V} P\left(t-s,\cdot,y\right)\left|u_1^q(s, y)-u_2^q(s, y)\right| \mu(y)\right\|_{\ell^{s_1}(V)}ds\right\}.\label{6.40}
    \ees
    From \eqref{A}, for $p>1$, arguing as in \eqref{6.16}, we may deduce that
    \bes
    \left|v_1^p(s, y)-v_2^p(s, y)\right| 
    &\leq&p\left(\left\|v_1\right\|_{C\left([0,T];\ell^{\infty}(V)\right) }+\left\|v_2\right\|_{C\left([0,T];\ell^{\infty}(V)\right)}\right)^{p-1}\left|\left(v_1-v_2\right)(s, y)\right|\nm\\
    &=:&\tilde{C}_{15}\left|\left(v_1-v_2\right)(s,y)\right|,\label{6.23}
    \ees
where
$$\tilde{C}_{15}:=\max\left\{1,p\left(\left\|v_1\right\|_{C\left([0,T];\ell^{\infty}(V)\right) }+\left\|v_2\right\|_{C\left([0,T];\ell^{\infty}(V)\right)}\right)^{p-1}\right\}>0.$$
Similarly, thanks to $q\ge1$, we have
    \bes
    \left|u_1^q(s, y)-u_2^q(s, y)\right| 
    \le C_{16}\left|\left(u_1-u_2\right)(s, y)\right|,\label{6.24}
    \ees
where $$C_{16}:=\max\left\lbrace1,q\left(\left\|v_1\right\|_{C\left([0,T];\ell^{\infty}(V)\right) }+\left\|v_2\right\|_{C\left([0,T];\ell^{\infty}(V)\right)}\right)^{q-1}\right\rbrace>0.$$
    Furthermore, recalling that $s_1 \ge 1$, by \eqref{6.23}, \eqref{6.24} and Lemma 3.7, we know that for $t \in[0, T], s \in[0, t]$,
    \bes
    &&\left\|\sum_{y \in V} P(t-s, \cdot, y)\left|v_1^p(s, y)-v_2^p(s, y)\right| \mu(y)\right\|_{\ell^{s_1}(V)}\nm\\
    &\le&\tilde{C}_{15}\left\|\sum_{y \in V} P\left(t-s,\cdot, y\right)\left|v_1(s, y)-v_2(s, y)\right| \mu(y)\right\|_{\ell^{s_1}(V)}\nm\\
    &\le&\tilde{C}_{15}\left\|v_1\left(s,\cdot\right)-v_2\left(s,\cdot\right)\right\|_{\ell^{s_1}(V)}\label{6.25}
    \ees
and 
    \bes
    &&\left\|\sum_{y \in V} P(t-s,\cdot,y)\left|u_1^q(s, y)-u_2^q(s, y)\right| \mu(y)\right\|_{\ell^{s_{1}}(V)}\nm\\
    &\le& C_{16}\left\|\sum_{y \in V} P(t-s,\cdot,y)\left|u_1(s, y)-u_2(s, y)\right| \mu(y)\right\|_{\ell^{s_1}(V)}\nm\\
    &\le& C_{16}\left\|u_1\left(s,\cdot\right)-u_2\left(s, \cdot\right)\right\|_{\ell^{s_1}(V)}.\label{6.26}
    \ees
    Combining \eqref{6.40}, \eqref{6.25} and \eqref{6.26}, one may see that 
    \bess
    &&\|\psi\left(u_1,v_1\right)- \psi\left(u_2, v_2\right)\|\nm\\
    &\leq&\max_{t \in[0, T]}\left\{\int_0^t \tilde{C}_{15}\left\|v_1(s, \cdot)-v_2\left(s,\cdot\right)\right\|_{\ell^{s_1}(V)} ds+\int_0^t C_{16}\left\|u_1(s,\cdot)-u_2(s, \cdot)\right\|_{\ell^{s_1}(V)}ds\right\}\nm\\
    &\leq&T\max\left\lbrace \tilde{C}_{15},C_{16}\right\rbrace \left\{\left\|v_1-v_2\right\|_{C\left([0, T] ;\ell^{s_1}(V)\right)}+\left\|u_1-u_2\right\|_{C\left([0,T];\ell^{s_1}(V)\right)}\right\}\nm\\
    &:=&C_{17}(T)\|\left(u_1,v_1\right)-\left(u_2, v_2\right)\|,
\eess
where $C_{17}(T):=T\max\left\lbrace \tilde{C}_{15},C_{16}\right\rbrace$. Thus we see that $\psi$ is a strict contraction map if $T$ is sufficiently small so that $ C_{17}(T)<1$.

{\bf Step 9}. Select $T_1\in(0,T]$ so small that $C_{17}(T_1)<1$. We can then apply Banach's Fixed Point Theorem to find a fixed point $(u,v)$ of $\psi$ and $$u, v\in C\left([0,T_1];\ell^{s_1}(V)\right).$$ By Lemma \ref{l3.4}, this implies that $(u,v)$ satisfies \eqref{3.26.} and $$u, v\in C\left([0,T_1];\ell^{\infty}(V)\right).$$ 
From this, through similar discussions as at the end of the first step in the proof of Theorem \ref{t2.2}, we know $(u,v)$ is a solution to \eqref{1.4.} for $t\in[0,T_1]$ and $x\in V$. Recalling that $p,q\ge 1$, by the proof Theorem \ref{t2.2}, we see $(u,v)\equiv(\tilde{u},\tilde{v})$ for $t\in\left[0,T_1\right]$ and $x\in V$.
    
    We now complete the proof.
\end{proof}

\begin{lem}\label{l6.2}
Let $G=(V,E,\omega,\mu)$ be a graph  satisfying CDE$(n,0)$ and the condition {\bf (LVG$m$)}. Assume $$ \mu_{\min}>0,~ \mu_{\max}<\infty,~ \omega_{\min}>0 \text{ and } D_{\mu}<\infty.$$ Suppose $p\le q, pq>1, p,q\ge 1$, and $\frac{q+1}{p q-1}<\frac{m}{2}$. 
Let $T_{3}>0$, 
\bes
	r_1=\frac{m}{2}\left(\frac{p q-1}{p+1}\right)\text{ and }r_2=\frac{m}{2}\left(\frac{pq-1}{q+1}\right).\label{7.1}
	\ees
Suppose also that $u_0\in \ell^{r_2}(V), v_0\in \ell^{r_2}(V)$, $(u(t,x),v(t,x))$ is a solution to \eqref{1.4.} for $t\in[0,T_{3}]$ and $x\in V$. Then there exist constant $\hat{C}_{1}>0$ and $\delta\in(0,1)$  so that if $$\left\|u_0\right\|_{\ell^{r_{2}}(V)}+\left\|v_0\right\|^{p}_{\ell^{r_2}(V)}\le\hat{C}_{1},$$ then for any $T>0$,
	\bess
	u(t,x)\in C\left([0,T];\ell^{s_1}(V)\right),~v(t,x)\in C\left([0,T];\ell^{s_2}(V)\right),
	\eess
where $s_1=\frac{r_1}{\delta}$ and $s_2=\frac{r_2}{\delta}$.
\end{lem}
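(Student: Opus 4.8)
\emph{Approach.} The plan is to run an Escobedo--Herrero type contraction argument in time-weighted $\ell^{s_1}(V)\times\ell^{s_2}(V)$ spaces, using the $L^p$--$L^q$ smoothing estimates of Theorem~\ref{t2.1.}, and then to identify the resulting fixed point with $(u,v)$ by uniqueness of $\ell^\infty$-solutions. Since $p\le q$ we have $r_2\le r_1$, so by Lemma~\ref{l3.5} the hypothesis $u_0,v_0\in\ell^{r_2}(V)$ already yields $u_0\in\ell^{r_1}(V)\cap\ell^{s_1}(V)$ and $v_0\in\ell^{r_2}(V)\cap\ell^{s_2}(V)$ for any $s_1\ge r_1$, $s_2\ge r_2$; in particular there is \emph{no} time singularity at $t=0$ in the target spaces, which is what makes the discrete problem more flexible than its continuous counterpart. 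Set
\[
\gamma_1=\frac{(p+1)(1-\delta)}{pq-1},\qquad \gamma_2=\frac{(q+1)(1-\delta)}{pq-1},\qquad s_1=\frac{r_1}{\delta},\qquad s_2=\frac{r_2}{\delta},
\]
and, for $T>0$, let $X_T$ be the space of pairs $(u,v)$ with $u\in C([0,T];\ell^{s_1}(V))$, $v\in C([0,T];\ell^{s_2}(V))$ and
\[
\|(u,v)\|_{X_T}=\sup_{0<t\le T}t^{\gamma_1}\|u(t,\cdot)\|_{\ell^{s_1}(V)}+\sup_{0<t\le T}t^{\gamma_2}\|v(t,\cdot)\|_{\ell^{s_2}(V)},
\]
on which we consider $\psi(u,v)=(\Phi_1(v),\Phi_2(u))$ with $\Phi_1,\Phi_2$ as in \eqref{3.18}--\eqref{3.19}.

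\emph{Choice of $\delta$.} The whole scheme hinges on taking any
\[
\delta\in\Big(\max\big\{\tfrac{p+1}{p(q+1)},\ \tfrac{q+1}{q(p+1)}\big\},\ \min\big\{1,\ \tfrac{r_1}{q},\ \tfrac{r_2}{p}\big\}\Big),
\]
and the point is that this interval is nonempty under the standing hypotheses. Each lower endpoint is $<1$ because $pq>1$; one has $\tfrac{q+1}{q(p+1)}<\tfrac{r_1}{q}\iff m>\tfrac{2(q+1)}{pq-1}$, which is precisely the subcriticality assumption; $\tfrac{q+1}{q(p+1)}<\tfrac{r_2}{p}$ and $\tfrac{p+1}{p(q+1)}<\tfrac{r_2}{p}$ follow from $p\le q$ together with the same assumption; and $\tfrac{p+1}{p(q+1)}<\tfrac{r_1}{q}$ follows from $m>\tfrac{2(q+1)}{pq-1}\ge\tfrac{2q(p+1)^2}{p(q+1)(pq-1)}$, the last step being the elementary identity $p(q+1)^2-q(p+1)^2=(q-p)(pq-1)\ge0$ (this is exactly where $p\le q$ enters). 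For $\delta$ in this range one checks directly that $1<s_2/p\le s_1$ and $1<s_1/q\le s_2$ (these reduce to $pq\ge1$); that the exponent pairs $(s_2/p,s_1)$ and $(s_1/q,s_2)$, with the third exponent determined by \eqref{3.4}, are admissible in Theorem~\ref{t2.1.}, with decay rates $\tfrac m2(\tfrac{p}{s_2}-\tfrac1{s_1})=\delta=\tfrac m2(\tfrac{q}{s_1}-\tfrac1{s_2})$; that $p\gamma_2<1$, $q\gamma_1<1$, $\gamma_1,\gamma_2<m/2$; and that the scaling identities $\gamma_1=p\gamma_2+\delta-1=\tfrac m2(\tfrac1{r_1}-\tfrac1{s_1})$ and $\gamma_2=q\gamma_1+\delta-1=\tfrac m2(\tfrac1{r_2}-\tfrac1{s_2})$ hold, so that the time weights cancel exactly.

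\emph{The fixed point.} With these facts the estimates are routine. By Theorem~\ref{t2.1.}, Lemma~\ref{l3.5} and Lemma~\ref{t2.1}, for $0<t\le T$,
\[
\|\Phi_1(v)(t,\cdot)\|_{\ell^{s_1}(V)}\le C\|u_0\|_{\ell^{r_1}(V)}t^{-\gamma_1}+C\int_0^t(t-s)^{-\delta}\|v(s,\cdot)\|_{\ell^{s_2}(V)}^p\,ds,
\]
and since $\|v(s,\cdot)\|_{\ell^{s_2}(V)}^p\le s^{-p\gamma_2}\|(u,v)\|_{X_T}^p$, a Beta-integral computation together with $1-\delta-p\gamma_2=-\gamma_1$ gives $t^{\gamma_1}\|\Phi_1(v)(t,\cdot)\|_{\ell^{s_1}(V)}\le C\|u_0\|_{\ell^{r_1}(V)}+CB(1-\delta,1-p\gamma_2)\|(u,v)\|_{X_T}^p$; the bound for $\Phi_2$ is symmetric, with $\|v_0\|_{\ell^{r_2}(V)}$ and $\|(u,v)\|_{X_T}^q$. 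All constants are independent of $T$, so for $\hat C_1$ and the radius $R$ small enough, $\psi$ maps the closed ball $\{\|(u,v)\|_{X_T}\le R\}$ of nonnegative pairs into itself and is a strict contraction there. Banach's fixed point theorem produces $(\tilde u,\tilde v)\in X_T$; nonnegativity is preserved as in Theorem~\ref{t2.2}, and continuity of $\tilde u$ into $\ell^{s_1}(V)$ (including at $t=0$, with value $u_0\in\ell^{s_1}(V)$) and of $\tilde v$ into $\ell^{s_2}(V)$ follows by routine arguments (dominated convergence, using the integrability of the kernels above, and the continuity arguments of Lemma~\ref{l6.1}). Then $(\tilde u,\tilde v)$ satisfies \eqref{3.26.}, hence \eqref{1.4.}, and lies in $C([0,T];\ell^\infty(V))\times C([0,T];\ell^\infty(V))$ by Lemma~\ref{l3.4}; by uniqueness of $\ell^\infty$-solutions (from the proof of Theorem~\ref{t2.2}, valid since $p,q\ge1$) it coincides with $(u,v)$ on $[0,\min\{T,T_3\}]$ and extends it when $T>T_3$. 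Since $T>0$ is arbitrary, $(u,v)$ extends to a solution lying in $C([0,T];\ell^{s_1}(V))$ and $C([0,T];\ell^{s_2}(V))$ for every $T$, which is the assertion.

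\emph{Main obstacle.} The key difficulty is precisely the simultaneous feasibility of all the exponent constraints — the nonemptiness of the admissible $\delta$-interval — which is where the subcritical hypothesis $\tfrac{q+1}{pq-1}<\tfrac m2$ and the normalization $p\le q$ are genuinely used (through $(q-p)(pq-1)=p(q+1)^2-q(p+1)^2$). A secondary point is that every constant in the contraction estimate must be independent of $T$, which forces the scaling-critical weights $\gamma_1,\gamma_2$ and the exact cancellations $\gamma_1=p\gamma_2+\delta-1$, $\gamma_2=q\gamma_1+\delta-1$; once the right exponents are fixed, the remaining work (the Beta-integral bounds, the continuity-in-time arguments, the uniqueness matching) is lengthy but proceeds along the lines already established in Lemmas~\ref{l3.5}, \ref{l3.9}, \ref{l6.1} and Theorem~\ref{t2.2}.
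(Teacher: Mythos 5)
Your proposal is correct in outline but takes a genuinely different route from the paper. The paper never constructs a new solution by contraction in weighted norms: it first upgrades the given solution's regularity via Lemma \ref{l6.1} (so that $u\in C([0,T_4];\ell^{s_1}(V))$, $v\in C([0,T_4];\ell^{s_2}(V))$ on some initial interval), writes the Duhamel identity through Lemma \ref{l3.12}, applies the smoothing estimates of Theorem \ref{t2.1.} to get the weighted inequalities \eqref{7.39}--\eqref{7.40}, substitutes one into the other, and closes the resulting scalar inequality $g(t)\le C_{20}\bigl(\|u_0\|_{\ell^{r_1}(V)}+\|v_0\|_{\ell^{r_2}(V)}^{p}+g(t)^{pq}\bigr)$ for $g(t)=\sup_{\tau\in[0,t]}\tau^{w}\|u(\tau,\cdot)\|_{\ell^{s_1}(V)}$ by a continuity (bootstrap) argument with small data; the resulting a priori bounds \eqref{6.66}, \eqref{7.63} give globality. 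You instead build a global solution directly by a scale-invariant Weissler/Escobedo--Herrero-type fixed point in the time-weighted spaces and then identify it with $(u,v)$ by uniqueness of bounded solutions. Your exponent bookkeeping (the $\delta$-window, $\gamma_1=p\gamma_2+\delta-1$, $\gamma_2=q\gamma_1+\delta-1$, $p\gamma_2<1$, $q\gamma_1<1$, and the use of $p(q+1)^2-q(p+1)^2=(q-p)(pq-1)$) coincides exactly with the paper's Steps 1--6 (note also that $\delta<r_2/p\le m/2$, so your window sits inside $(0,\min\{1,m/2\})$ as in \eqref{7.10}), and your identification step is at the same level of rigor as the paper's own appeal in Step 9 of Lemma \ref{l6.1}. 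What each buys: the paper's route needs only the product $pq>1$ in the final scalar inequality and avoids any completeness/contraction issues in weighted spaces, at the cost of invoking Lemma \ref{l6.1}; yours is self-contained with $T$-independent constants and makes transparent where subcriticality and $p\le q$ enter.

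Two points in your write-up need patching before it is a complete proof. First, the assertion that $\psi$ is a strict contraction on a small ball of $X_T$ for $R,\hat C_1$ small fails as stated in the admissible borderline cases $p=1$ or $q=1$ (e.g. $p=1<q$, which the hypotheses allow): the Lipschitz constant of $v\mapsto\Phi_1(v)$ in your weighted norm is then $C_4\,B(1-\delta,1-\gamma_2)$, a fixed constant that shrinks neither as $R,\hat C_1\to0$ nor as $T\to0$ (your weights make the estimate $T$-scale-invariant). The standard repair is to contract the second iterate $\psi\circ\psi$, or to use an asymmetric product norm $\sup t^{\gamma_1}\|u\|_{\ell^{s_1}(V)}+\lambda\sup t^{\gamma_2}\|v\|_{\ell^{s_2}(V)}$; the relevant smallness is then the product of the two Lipschitz constants, of order $R^{pq-1}$, so only $pq>1$ is used -- mirroring the paper's scalar inequality. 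Second, $C([0,T];\ell^{s_1}(V))$ equipped with $\sup_{0<t\le T}t^{\gamma_1}\|\cdot\|_{\ell^{s_1}(V)}$ is not complete (the weight loses all control near $t=0$), so Banach's theorem should be run in the complete space of maps continuous on $(0,T]$ with finite weighted norm; continuity at $t=0$ with values $u_0,v_0$ is then recovered a posteriori, using $u_0\in\ell^{s_1}(V)$, $v_0\in\ell^{s_2}(V)$ (Lemma \ref{l3.5}), the bound $\bigl\|\sum_yP(t-s,\cdot,y)v^p(s,y)\mu(y)\bigr\|_{\ell^{s_1}(V)}\le\|v(s,\cdot)\|^p_{\ell^{ps_1}(V)}\le C\|v(s,\cdot)\|^p_{\ell^{s_2}(V)}$ (valid since $ps_1\ge s_2$, i.e. $pq\ge1$, via Lemmas \ref{l3.9} and \ref{l3.5}) together with $p\gamma_2<1$, $q\gamma_1<1$. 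Finally, your matching step tacitly uses uniqueness of bounded solutions on the whole overlap interval, which requires iterating the local uniqueness of Theorem \ref{t2.2} in time; this is routine and is exactly what the paper also invokes, but it deserves a sentence. With these repairs your proposal is a valid alternative proof of the lemma.
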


\begin{proof}
{\bf Step 1}. Since $p\le q$, $\frac{q+1}{pq-1}<\frac{m}{2}$, we have 
\bes
1\le r_2\le r_1.\label{7.2}
\ees
Due to $pq>1, \frac{q(p+1)}{q+1}>\frac{1+q}{q+1}=1$. From $\frac{q+1}{p q-1}<\frac{m}{2}$, there exists a small $\epsilon_1>0$ such that \bes
        \frac{2(q+1)(1+\tilde{\epsilon})}{m(p q-1)}<1
    \ees
and 
\bes
\frac{q^2(p+1)^2}{(q+1)^2} \ge 1+\tilde{\epsilon}\label{6.42}
\ees
for all $\tilde{\epsilon} \in\left(0, \epsilon_1\right)$. Thanks to $pq>1$, 
\bess
\frac{q+1}{q(p+1)}<\frac{q+1}{1+q}=1.
\eess
Since $q>-1$ and $\frac{q+1}{pq-1}<\frac{m}{2}$, we have $$\frac{q+1}{q(p+1)}<\frac{q+1}{p q-1}<\frac{m}{2}.$$ Thus, there exists $0<\epsilon_2$ such that
\bes
\frac{(q+1)(1+\epsilon)}{q(p+1)}<\min \left\{1, \frac{m}{2}\right\} \text { for all } \epsilon \in\left(0, \epsilon_2\right) \text {. }
\ees
Therefore, we can find $\epsilon \in\left(0, \min \left(\epsilon_1, \epsilon_2\right)\right)$ so that
\bes
&&\frac{2(q+1)(1+\epsilon)}{m(pq-1)}<1,\label{6.27}\\[2mm]
&&\frac{q^2(p+1)^2}{(q+1)^2} \ge 1+\epsilon,\label{7.6}
\ees
and
\bes
    \frac{(q+1)(1+\epsilon)}{q(p+1)}<\min \left\{1, \frac{m}{2}\right\}.\label{6.47}
\ees

{\bf Step 2}. Let 
\begin{equation}\label{7.9}
\delta \in\left(\frac{q+1}{q(p+1)}, \frac{(q+1)(1+\epsilon)}{q(p+1)}\right);
\end{equation}
then 
\bes
0<\delta<\min \left\{1, \frac{m}{2}\right\}.\label{7.10}
\ees
Taking
\bes
s_1=\frac{m}{2 \delta}\left(\frac{p q-1}{p+1}\right)\label{7.11}
\ees
and
\bes
s_2=\frac{m}{2 \delta}\left(\frac{p q-1}{q+1}\right).\label{7.12}
\ees
Then by \eqref{7.2}, we have
\bes
\frac{r_1}{\delta}=s_1\ge s_2=\frac{r_2}{\delta}>1.\label{7.13}
\ees
Due to $\delta\in(0,1)$, by \eqref{7.12}, 
\bes
\frac{r_2}{\delta}=s_2> r_2.\label{7.14}
\ees
In view of this, recalling that $u_0,v_0\in\ell^{r_2}(V)$, by \eqref{7.14} and Lemma \ref{l6.1}, we know there exists $T_{4}\in (0,T_3]$ so that 
\bes
u(t,x)\in C\left([0,T_{4}];\ell^{s_2}(V)\right),~v(t,x)\in C\left([0,T_{4}];\ell^{s_2}(V)\right).\label{7.15}
\ees
By \eqref{7.13} and Lemma \ref{l3.5}, we know that 
\bess
C\left([0,T_{4}];\ell^{s_2}(V)\right)\subset C\left([0,T_{4}];\ell^{s_1}(V)\right).
\eess
Combining this and \eqref{7.15} yields that
\bes
u(t,x)\in C\left([0,T_{4}];\ell^{s_1}(V)\right),~v(t,x)\in C\left([0,T_{4}];\ell^{s_2}(V)\right).\label{7.16}
\ees

{\bf Step 3}. We next show that
\bes
s_2\ge\frac{s_1}{q},\label{6.51}
\ees
and
\bes
s_2>p.\label{7.17}
\ees
Recalling $0<\delta<1$. It is easy to check that
    $$
    \mathbf{s}_2\ge\frac{s_1}{q} \Leftrightarrow \frac{r_2}{\delta}\ge\frac{s_1}{q}\Leftrightarrow\frac{r_2}{s_1}\ge \frac{\delta}{q} \Leftrightarrow\frac{p+1}{q+1}\delta\ge\frac{\delta}{q}\Leftrightarrow\frac{q(p+1)}{q+1}\ge1\Leftrightarrow pq\ge1.
    $$
Thus we get $s_2 \ge \frac{s_1}{q}$. We claim that $s_2>p$. It is easy to see that
    $$
    s_2>p \Leftrightarrow \frac{r_2}{\delta}>p\Leftrightarrow \frac{r_2}{p}>\delta\Leftrightarrow \frac{m}{2}\left(\frac{pq-1}{q+1}\right)\frac{1}{p}>\delta.
    $$
Recalling that $\delta<\frac{(q+1)(1+\epsilon)}{q(p+1)}$, by \eqref{7.9}, it suffices to prove that
$$
    \frac{(q+1)(1+\epsilon)}{q(p+1)} \leq \frac{m}{2} \frac{p q-1}{q+1} \frac{1}{p},
$$
which is equivalent to
$$
\frac{2(1+\epsilon)(q+1)}{m(p q-1)} \le \frac{(p+1) q}{(q+1) p} \text {. }
$$
Due to $q\ge p$, by \eqref{6.27}, we obtain
$$
\frac{2(1+\epsilon)(q+1)}{m(p q-1)}<1=\frac{p q+p}{(q+1) p}\le\frac{(p+1)q}{(q+1)p} .
$$
Hence, we get $s_2>p$.

{\bf Step 4}. Now, we show that 
\bes
s_1> r_1,\label{7.19.}
\ees  
\bes
s_1\ge\frac{s_2}{p},\label{7.20}
\ees  
and 
\bes
s_1>q. \label{7.19}
\ees
By \eqref{7.10}, \eqref{7.13}, we have \eqref{7.19.}. Obviously, 
$$s_1 \ge \frac{s_2}{p} \Leftrightarrow \frac{r_1}{\delta} \ge \frac{s_2}{p} \Leftrightarrow \frac{r_1}{s_2} p \ge \delta.$$ Recalling $p\le q$, it follows from \eqref{7.1} and \eqref{7.12} that 
\bes
    \frac{r_1}{s_2} p=\frac{\frac{m}{2}\left(\frac{p q-1}{p+1}\right)}{\frac{m}{2 \delta}\left(\frac{p q-1}{q+1}\right)} p=\delta \frac{q+1}{p+1}p\ge\delta.\ees
Thus, we obtain $\frac{r_1}{s_2}p\ge\delta$ and hence \eqref{7.20}.
Obviously,
$$
s_1 > q \Leftrightarrow\frac{m}{2 \delta} \frac{p q-1}{p+1}> q \Leftrightarrow \frac{m}{2 \delta} \frac{p q-1}{q(p+1)}> 1 \Leftrightarrow \frac{m}{2} \frac{p q-1}{q(p+1)}>\delta.
$$
Thus, by virtue of \eqref{7.9}, to prove $s_{1}>q$, it is sufficient to show that
\bes
\frac{m}{2} \frac{p q-1}{q(p+1)} \ge \frac{(q+1)(1+\epsilon)}{q(p+1)} \text {, }
\ees
which is equivalent to
$$
\frac{m}{2} \frac{p q-1}{q+1} \ge 1+\epsilon \text {. }
$$
This can be obtained by \eqref{6.27}. Hence, we obtain $$s_1>q.$$ 

{\bf Step 5}. In view of \eqref{7.11} and \eqref{7.12}, recalling that $\delta>0$, we have
\bes
\frac{1}{s_1}>\frac{p}{s_2}-\frac{2}{m} &\Leftrightarrow&\frac{2 \delta}{m} \frac{p+1}{p q-1}>\frac{2 \delta}{m} \frac{q+1}{pq-1}p-\frac{2}{m}\nm\\[2mm]
&\Leftrightarrow&\frac{2}{m}>\frac{2\delta}{m}\frac{pq-1}{pq-1}\nm\\[2mm]
&\Leftrightarrow&\delta<1.\label{6.37}
\ees
Hence, by \eqref{7.10}, we obtain 
\bes
\frac{1}{s_1}>\frac{p}{s_2}-\frac{2}{m}.\label{7.21}
\ees
To prove $\frac{1}{s_2}>\frac{q}{s_1}-\frac{2}{m}$, by \eqref{7.11} and \eqref{7.12},
it is sufficient to show that
\bes
\frac{2\delta}{m}\left(\frac{q+1}{pq-1}\right)>q\frac{2\delta}{m}\frac{p+1}{pq-1}-\frac{2}{m}.\label{7.22}
\ees
By \eqref{7.10}, we have
$$
\frac{2}{m}>q \frac{2 \delta}{m} \frac{p+1}{p q-1}-\frac{2 \delta}{m} \frac{q+1}{p q-1}=\frac{2 \delta}{m} \frac{p q-1}{p q-1} .
$$
It follows that \eqref{7.22} holds. Thus we get
$$
\frac{1}{s_2}>\frac{q}{s_1}-\frac{2}{m}.
$$

{\bf Step 6}. In view of \eqref{7.11} and \eqref{7.12}, by direct calculations, it can be concluded that
$$
\begin{aligned}
& \frac{p}{s_2}-\frac{1}{s_1}=\frac{2 \delta}{m} \frac{q+1}{p q-1} p-\frac{2 \delta}{m} \frac{p+1}{p q-1}=\frac{2 \delta}{m} \frac{p q-1}{p q-1}=\frac{2 \delta}{m}, \\[2mm]
& \frac{q}{s_1}-\frac{1}{s_2}=\frac{2 \delta}{m} \frac{p+1}{p q-1} q-\frac{2 \delta}{m} \frac{q+1}{p q-1}=\frac{2 \delta}{m} \frac{p q-1}{p q-1}=\frac{2 \delta}{m}.
\end{aligned}
$$
It follows that
\bes
&1+\frac{1}{s_1}=\frac{1}{\frac{s_2}{p}}+1-\frac{2\delta}{m},\label{7.24}
\ees
\bes
&1+\frac{1}{s_2}=\frac{1}{\frac{s_1}{q}}+1-\frac{2\delta}{m}.\label{6.74}
\ees
Let 
\bes 
k_*=\frac{1}{1-\frac{2 \delta}{m}}.\label{6.55}
\ees 
Then by \eqref{7.10}, we have 
\bes
k_{*}\in[1,+\infty).\label{7.28.}
\ees

{\bf Step 7}. Next we always assume $t\in(0,T_{4}]$. 
Thanks to $s_1\ge 1$, by virtue of \eqref{7.16}, \eqref{7.13}, \eqref{7.17}, \eqref{7.28.} and \eqref{7.24}, using Theorem \ref{t2.1.}, we know for $s\in[0,t]$,
\bes
&&\left\|\sum_{y \in V} P(t-s, \cdot, y) v^p(s, y) \mu(y)\right\|_{\ell^{s_1}(V)}\nm\\
&\leq& C_4\left\|v^p(s, \cdot)\right\|_{\ell^{s_2 / p}(V)}(t-s)^{-\frac{m}{2}\left(1-\frac{1}{k_{*}}\right)}\nm\\
&=& C_4\|v(s, \cdot)\|_{\ell^{s_2}(V)}^p(t-s)^{-\frac{m}{2}\left(1-\frac{1}{k_{*}}\right)}.\label{7.35}
\ees
In view of \eqref{7.13}, \eqref{7.19}, \eqref{7.28.}, \eqref{7.19} and \eqref{6.74}, by Theorem \ref{t2.1.}, we know for $s\in[0,t]$,
\bes
&&\left\|\sum_{y \in V} P(t-s, \cdot, y) u^q(s, y) \mu(y)\right\|_{\ell^{s_2}(V)}\nm\\
&\leq& C_4\left\|u^q(s, \cdot)\right\|_{\ell^{s_1 / q}(V)}(t-s)^{-\frac{m}{2}\left(1-\frac{1}{k_{*}}\right)}\nm\\
&=& C_4\|u(s, \cdot)\|_{\ell^{s_1}(V)}^q(t-s)^{-\frac{m}{2}\left(1-\frac{1}{k_{*}}\right)}.\label{6.78}
\ees
Let $m_{1}$ satisfy
\bes
1+\frac{1}{s_1}=\frac{1}{r_1}+\frac{1}{m_1}.\label{7.30}
\ees
Then by \eqref{7.19.}, we know $m_1>1$. From this, \eqref{7.2}, \eqref{7.13} and \eqref{7.30}, using Theorem \ref{t2.1.}, we see 
\bes
\left\|\sum_{y\in V}P(t, x, y) u_0(y)\mu(y)\right\|_{\ell^{s_1}(V)}\le C_4\left\|u_0\right\|_{\ell^{r_1}(V)} t^{-\frac{m}{2}\left(\frac{1}{r_1}-\frac{1}{s_1}\right)}.\label{7.32}
\ees
Let $m_{2}$ satisfy
\bes
1+\frac{1}{s_2}=\frac{1}{r_2}+\frac{1}{m_2}.\label{7.33}
\ees
Then by \eqref{7.14}, we know $m_2>1$. From this, \eqref{7.2}, \eqref{7.13} and \eqref{7.33}, using Theorem \ref{t2.1.}, we see 
\bes
\left\|\sum_{y\in V}P(t, x, y) v_0(y)\mu(y)\right\|_{\ell^{s_2}(V)}\le C_4\left\|v_0\right\|_{\ell^{r_2}(V)} t^{-\frac{m}{2}\left(\frac{1}{r_2}-\frac{1}{s_2}\right)}.\label{7.34}
\ees 
By Lemma \ref{l3.12}, we know that 
\bes
(u,v) \text{ satisfies } \eqref{3.26.}.\label{7.37}
\ees
In view of \eqref{7.16}, by similar arguments as the proof of \eqref{6.26..}, we know 
\bes 
\sum\limits_{y\in V}P(t-s,\cdot,y)v^p(s,y)\mu(y)\text{ is strongly measurable w.r.t.~} s\in[0,t] \text{ in } \ell^{s_1}(V).\label{7.38}
\ees
 Hence, by triangle inequality, \eqref{7.37}, \eqref{7.32}, \eqref{7.34}, \eqref{7.35}, \eqref{6.78} and \eqref{7.38}, using Theorem \ref{t2.1.}, we deduce that
\bes
\|u(t,\cdot)\|_{\ell^{s_1}(V)}&\le&\left\|\sum_{y\in V}P(t,\cdot,y) u_0(y)\mu(y)\right\|_{\ell^{s_1}(V)}+\int_{0}^{t}\left\|\sum_{y\in V}P(t-s,\cdot,y)v^p(s,y) \mu(y)\right\|_{\ell^{s_1(V)}}ds\nm\\
&\le&C_4\left\|u_0\right\|_{\ell^{r_1}(V)} t^{-\frac{m}{2}\left(\frac{1}{r_1}-\frac{1}{s_1}\right)}+C_{4}\int_0^t(t-s)^{-\frac{m}{2}\left(1-\frac{1}{k_*}\right)}\|v(s, \cdot)\|_{\ell^{s_2}(V)}^pds, \label{7.39}
\ees 
and
\bes
\|v(t,\cdot)\|_{\ell^{s_2}(V)}&\le&\left\|\sum_{y\in V}P(t,x,y) v_0(y)\mu(y)\right\|_{\ell^{s_2}(V)}+\int_{0}^{t}\left\|\sum_{y\in V}P(t-s,\cdot,y)u^q(s,y) \mu(y)\right\|_{\ell^{s_1(V)}}ds\nm\\
&\le&C_4\left\|v_0\right\|_{\ell^{r_2}(V)} t^{-\frac{m}{2}\left(\frac{1}{r_2}-\frac{1}{s_2}\right)}
+C_{4}\int_0^t(t-s)^{-\frac{m}{2}\left(1-\frac{1}{k_*}\right)}\|u(s, \cdot)\|_{\ell^{s_1}(V)}^qds.\label{7.40}
\ees
By \eqref{6.55}, we have
\bes
 \frac{m}{2}\left(1-\frac{1}{k_*}\right)=\delta.\label{6.56}
\ees 
Then substituting the second inequality into the first inequality above, we know that
\bes
\|u(t,\cdot)\|_{\ell^{s_1}(V)}&\le&C_4\left\|u_0\right\|_{\ell^{r_1}(V)} t^{-\frac{m}{2}\left(\frac{1}{r_1}-\frac{1}{s_1}\right)}+C_{4}\int_0^t(t-s)^{-\delta}\nm\\[2mm]
&&\left[C_4\left\|v_0\right\|_{\ell^{r_2}(V)} s^{-\frac{m}{2}\left(\frac{1}{r_2}-\frac{1}{s_2}\right)}+C_{4}\int_0^s(s-\tau)^{-\delta}\|u(\tau, \cdot)\|_{\ell^{s_1}(V)}^q d \tau\right]^p ds\nm\\[2mm]
&=&C_4^{}\left\|u_0\right\|_{\ell^{r_1}(V)} t^{-\frac{m}{2}\left(\frac{1}{r_1}-\frac{1}{s_1}\right)}\nm\\[2mm]
&&\quad+2^{p-1} C_4^{1+p}\left\|v_0\right\|_{\ell^{r_2}(V)}^p\left[\int_0^t(t-s)^{-\delta} s^{-\frac{m}{2}\left(\frac{1}{r_2}-\frac{1}{s_2}\right) p} d s\right]\nm\\[2mm]
&&\quad+2^{p-1}C_{4}^{1+p}\int_0^t(t-s)^{-\delta}\left[ \int_0^s(s-\tau)^{-\delta}\|u(\tau, \cdot)\|_{\ell^{s_1}(V)}^q d \tau\right]^p ds.\label{6.57}
\ees
It is well-known that
\bes
\int_0^t(t-s)^{-\alpha} s^{-\beta} ds=t^{1-\alpha-\beta} B(1-\beta, 1-\alpha),~\alpha, \beta \in(0,1),\label{6.58}
\ees
where $B$ is the beta function.
Let 
\bes w=\frac{m}{2}\left(\frac{1}{r_1}-\frac{1}{s_1}\right).\label{6.59}
\ees 
We now show that 
\bes 
wq,~\left(wq+\delta-1\right)p\in(0,1).\label{7.45.}
\ees
By \eqref{6.59}, we know 
\bes
wq=\frac{m}{2}\left(\frac{1}{r_1}-\frac{1}{s_1}\right)q.\label{7.46}
\ees
Thanks to $\frac{mq}{2}>0$, by \eqref{7.46} and \eqref{7.19.}, we know 
 \bes
 wq>0.\label{7.45}
 \ees 
 On the other hand, by \eqref{7.46}, \eqref{7.1} and \eqref{7.11}, we have
\bes
wq<1&\Leftrightarrow&\frac{m}{2}\left(\frac{1}{r_1}-\frac{\delta}{r_1}\right)q<1\Leftrightarrow \frac{m}{2} \frac{1-\delta}{r_1} q<1 \Leftrightarrow \frac{m}{2} \frac{1-\delta}{\frac{m}{2} \frac{p q-1}{p+1}} q<1 \Leftrightarrow(1-\delta) \frac{p+1}{p q-1} q<1\nm\\
&& \Leftrightarrow 1-\delta<\frac{p q-1}{p q+q} \Leftrightarrow \frac{q+1}{(p+1) q}<\delta.
\ees
Thus, by \eqref{7.9}, we know 
\bes
w q<1.\label{7.47}
\ees
By \eqref{7.46}, \eqref{7.1} and \eqref{7.11}, we see 
\bes
\left(w q+\delta-1\right)p&=&\left[\frac{m}{2}\left(\frac{1}{r_1}-\frac{1}{s_1}\right) q+\delta-1\right]p\nm\\
&=&\left[\frac{m}{2}\left(\frac{2}{m} \frac{p+1}{p q-1}-\frac{2 \delta}{m} \frac{p+1}{pq-1}\right) q+\delta-1\right]p\nm\\
&=&\left[\frac{p+1}{pq-1}(1-\delta) q+\delta-1\right]p\nm\\
&=&(1-\delta) \frac{q+1}{pq-1}p\nm\\
&=&(1-\delta) \frac{pq+p}{pq-1}.\label{7.48}
 \ees
Recalling that $\delta\in(0,1)$, $pq>1$, $p\ge 1$, by \eqref{7.48}, we deduce that
\bes
\left(w q+\delta-1\right)p>0.\label{7.49}
\ees
Recalling that $1 \le p \le q$, we have
$$
\left(\sqrt{p}+\frac{1}{\sqrt{p}}\right)^2 \le\left(\sqrt{q}+\frac{1}{\sqrt{q}}\right)^2,
$$
and thus
$$
\frac{(p+1)^2}{p} \le \frac{(q+1)^2}{q}.
$$
It follows that
$$
\frac{p+1}{p(q+1)} \le \frac{q+1}{q(p+1)}.
$$
From this and \eqref{7.9}, we see that
\begin{equation}\label{7.52}
\frac{p+1}{p(q+1)}<\delta
\end{equation}
and hence that
\begin{equation}\label{6.99}
(1-\delta)\frac{pq+p}{pq-1}<1.
\end{equation}
This implies that
\bes
\left(w q+\delta-1\right) p<1 \label{7.50}
\ees
by \eqref{7.48}. Combining \eqref{7.45}, \eqref{7.47}, \eqref{7.49} with \eqref{7.50}, we obtain \eqref{7.45.}. By \eqref{7.1} and \eqref{7.12}, we have
$$
\begin{aligned}
\frac{m}{2}\left(\frac{1}{r_2}-\frac{1}{s_2}\right)p&=\frac{m}{2}\left(\frac{1}{r_2}-\frac{1}{\frac{r_2}{\delta}}\right)p=\frac{m}{2}\left(\frac{1-\delta}{r_2}\right) p=\frac{m}{2}(1-\delta)\frac{2}{m}\frac{q+1}{p q-1}p\\&=(1-\delta)\frac{pq+p}{pq-1}.
\end{aligned}
$$
Since $\delta\in(0,1)$, $pq>1$, by \eqref{6.99}, it follows that
\begin{equation}\label{7.54}
\frac{m}{2}\left(\frac{1}{r_2}-\frac{1}{s_2}\right)p\in(0,1)\text {. }
\end{equation}
Then it follows from \eqref{6.57}, \eqref{6.58}, \eqref{7.45.}, \eqref{7.54},  \eqref{7.49} and \eqref{6.99} that 
\bes
t^w\|u(t,\cdot)\|_{\ell^{s_1}(V)}&\le& C_4\left\|u_0\right\|_{\ell^{r_1}(V)}+2^{p-1} C_4^{1+p }\left\|v_0\right\|_{\ell^{r_2}(V)}^pC_{18}t^{w+\left[1-\delta-\frac{m}{2}\left(\frac{1}{r_2}-\frac{1}{s_2}\right)p\right]}+2^{p-1}\times\nm\\[2mm]
&&C_{4}^{1+p}t^w  \int_0^t(t-s)^{-\delta}\left[\int_0^s(s-\tau)^{-\delta} \tau^{-w q}\left(\tau^w\|u(\tau,\cdot)\|_{\ell^{s_1}(V)} \right)^q d \tau\right]^p ds\nm\\[2mm]
&\le&C_4\left\|u_0\right\|_{\ell^{r_1}(V)}+2^{p-1}C_4^{1+p}\left\|v_0\right\|_{\ell^{r_2}(V)}^pC_{18}t^{w+\left[1-\delta-\frac{m}{2}\left(\frac{1}{r_2}-\frac{1}{s_2}\right) p\right]}+2^{p-1}\times\nm\\[2mm]
&&C_{4}^{1+p}t^w\int_0^t(t-s)^{-\delta}\left[\int_0^s(s-\tau)^{-\delta}\tau^{-wq}d\tau\right]^p ds\left(\sup_{s\in[0, t]}s^w\|u(s ,\cdot)\|_{\ell^{s_1}(V)}\right)^{pq}\nm\\[2mm]
&=&C_4\left\|u_0\right\|_{\ell^{r_1}(V)}+2^{p-1}C_{18}C_4^{1+p}\left\|v_0\right\|_{\ell^{r_2}(V)}^pt^{w+\left[1-\delta-\frac{m}{2}\left(\frac{1}{r_2}-\frac{1}{s_2}\right) p\right]}+2^{p-1}\times\nm\\[2mm]
&&t^wC_{4}^{1+p}t^{1-\delta+\left(1-\delta-wq\right)p}C_{19}\left(\sup_{s\in[0,t]} s^w\|u(s,\cdot)\|_{\ell^{s_1}(V)}\right)^{pq},\label{6.60}
\ees
where $C_{18}=B\left(1-\delta,1-\frac{m}{2}\left(\frac{1}{r_2}-\frac{1}{s_2}\right)p\right)$, and
$$C_{19}=B\left(1-\delta, 1+\left(1-\delta-wq\right) p\right) B^p\left(1-\delta, 1-w q\right).$$ By \eqref{7.1}, \eqref{7.13} and \eqref{6.59}, we get 
\bes
w=(1-\delta)\frac{p+1}{pq-1}.\label{7.53}
\ees
By \eqref{7.1}, \eqref{7.13} and \eqref{7.53},
\bes
1-\delta-\frac{m}{2}\left(\frac{1}{r_2}-\frac{1}{s_2}\right) p &=&1-\delta-\frac{m}{2}\frac{1-\delta}{r_2} p\nm\\
&=&1-\delta-\frac{m}{2}(1-\delta)\frac{2}{m} \frac{q+1}{pq-1}p\nm\\
&=&1-\delta-(1-\delta)\cdot \frac{pq+p}{pq-1}\nm\\
&=&(1-\delta)\frac{-1-p}{pq-1}\nm\\
&=&-(1-\delta)\frac{p+1}{pq-1}\nm\\
&=&-w.\label{7.56}
\ees
By \eqref{7.53}, we see that 
\bes
&&1-\delta+\left(1-\delta-wq\right)p\nm\\[2mm]
&=&1-\delta+\left[1-\delta-\frac{(1-\delta)(p+1)q}{pq-1}\right]p\nm\\[2mm]
&=&1-\delta+(1-\delta)\frac{(-1-q)p}{pq-1}\nm\\[2mm]
&=&-(1-\delta)\frac{p+1}{pq-1}\nm\\[2mm]
&=&-w.\label{6.105}
\ees
In view of this and \eqref{7.56}, by \eqref{6.60}, we get
\bes
t^w\|u(t,\cdot)\|_{\ell^{s_1}(V)}&\le&C_4\left\|u_0\right\|_{\ell^{r_1}(V)}+2^{p-1} C_{18}C_4^{1+p}\left\|v_0\right\|_{\ell^{r_2}(V)}^p\nm\\
&+&2^{p-1}C_{4}^{1+p}C_{19}\left(\sup_{s\in[0,t]}s^w\|u\left(s,\cdot\right)\|_{\ell^{s_1(V)}}\right)^{pq}.\label{6.63}
\ees
Define $g(t)=\sup\limits_{\tau \in[0, t]}\left(\tau^w\|u(\tau, \cdot)\|_{\ell^{s_1}(V)}\right)$, $t\in[0,T_4]$. Then due to $pq>1$, by \eqref{6.63},
\bes
g(t)\le C_{20}\left(\left\|u_0\right\|_{\ell^{r_1}(V)}+\left\|v_0\right\|_{\ell^{r_2}(V)}^p+g^{pq}(t)\right),\label{6.64}
\ees
where $C_{20}=\max \left\{C_4,2^{p-1} C_{18}C_4^{1+p},2^{p-1}C_{4}^{1+p}C_{19}\right\}.$ 
Recalling $\left(\left\|u_0\right\|_{\ell^{r_2}(V)}+\left\|v_0\right\|_{\ell^{r_2}(V)}^p\right)\le\hat{C}_1$, by \eqref{7.2} and Lemma \ref{l3.5}, we deduce that 
\bes
\left(\left\|u_0\right\|_{\ell^{r_1}(V)}+\left\|v_0\right\|_{\ell^{r_2}(V)}^p\right)\le\hat{C}_2:=\left(\mu_{min}^{\frac{r_2-r_1}{r_2r_1}}+1\right)\hat{C}_{1}.\label{7.58}
\ees
Let $\hat{C}_1$ be sufficiently small so that 
$$2^{pq}C^{pq}_{20}\hat{C}_2^{pq-1}<1.$$ 
We claim that 
\bes
g(t)<2C_{20}\hat{C}_2\text{ in }(0,T_{4}]. \label{7.60.}
\ees
 Define $g(0)=0$, then by \eqref{7.16} and the definition of $g(t)$, we deduce that $g(t)$ is nondecreasing and continuous in $t$. Suppose by way of contradiction that there exists $t_0\in(0,T_4]$ so that $g\left(t_0\right)=2C_{20}\hat{C}_2$. Then by \eqref{6.64} and \eqref{7.58}, we know that
\bes
2C_{20}\hat{C}_2 \le C_{20}\left[\hat{C}_2+\left(2C_{20}\hat{C}_{2}\right)^{pq}\right],
\ees
and hence that $1\le(2C_{20})^{pq}\hat{C}_2^{p q-1}$, which is a contradiction. Thus, we get \eqref{7.60.} and
\bes
\|u(t,\cdot)\|_{\ell^{s_1}(V)}\le t^{-w}2\hat{C}_2C_{20}.\label{7.60}
\ees
By Lemma \ref{l3.4}, this implies that
\bes
\|u(t,\cdot)\|_{\ell^{\infty}(V)}\le \mu_{min}^{-\frac{1}{s_1}}2\hat{C}_{2}C_{20}t^{-w} \text { for } t \in(0,T_4].\label{6.66}
\ees
{\bf Step 8}. Similarly, we may show that there exists a constant $C_{21}>0$ such that
\bes
\|v(t,\cdot)\|_{\ell^{\infty}(V)}\le2C_{21}t^{-w_{1}} \text { for } t \in(0,T_4),\label{7.63}
\ees
where 
\bes
w_{1}=\frac{m}{2}(\frac{1}{r_2}-\frac{1}{s_2}).\label{6.112}
\ees
Substituting \eqref{7.39} into \eqref{7.40}, we discover
\bes
\|v(t,\cdot)\|_{\ell^{s_2}(V)}&\le&C_4\left\|v_0\right\|_{\ell^{r_2}(V)} t^{-\frac{m}{2}\left(\frac{1}{r_2}-\frac{1}{s_2}\right)}+C_{4}\int_0^t(t-s)^{-\delta}\nm\\[2mm]
&&\left[C_4\left\|u_0\right\|_{\ell^{r_1}(V)} s^{-w}+C_{4}\int_0^s(s-\tau)^{-\delta}\|v(\tau, \cdot)\|_{\ell^{s_2}(V)}^pd\tau\right]^q ds\nm\\[2mm]
&=&C_4\left\|v_0\right\|_{\ell^{r_2(V)}} t^{-w_1}\nm\\[2mm]
&&\quad+2^{q-1} C_4^{q+1}\left\|u_0\right\|_{\ell^{r_1}(V)}^q\left[\int_0^t(t-s)^{-\delta}s^{-wq}d s\right]\nm\\[2mm]
&&\quad+2^{q-1}C_{4}^{q+1}\int_0^t(t-s)^{-\delta}\left[ \int_0^s(s-\tau)^{-\delta}\|v(\tau,\cdot)\|_{\ell^{s_2}(V)}^pd \tau\right]^qds.\label{7.65}
\ees
From \eqref{6.105}, \eqref{7.53} and \eqref{6.112}, we see that 
\begin{equation*}
1 - \delta - wq=\frac{-w-(1 - \delta)}{p}
=\frac{-(1 - \delta)\frac{p + 1}{pq - 1}-(1 - \delta)}{p}
=\frac{-(q + 1)(1 - \delta)}{pq - 1},
\end{equation*}
and
\begin{equation*}
\begin{aligned}
    -w_1=-\frac{m}{2}\left(\frac{1}{r_2}-\frac{1}{s_2}\right)
    =-\frac{m}{2}\left(\frac{1}{r_2}-\frac{\delta}{r_2}\right)
    =-\frac{m}{2}\frac{1 - \delta}{r_2}
   =-(1 - \delta)\frac{q + 1}{pq - 1}.
\end{aligned}
\end{equation*}
Hence, we get
\begin{equation}
1-\delta-wq=-w_1,~1-\delta-\left(\delta+w_1 p-1\right) q=-w_1.\label{7.66}
\end{equation}
Using \eqref{6.112}, \eqref{7.1} and \eqref{7.13}, we get
\begin{equation}\label{7.69}
\begin{aligned}
    \left(\delta+w_1 p-1\right) q & =\left[\delta+\frac{m}{2}\left(\frac{1}{r_2}-\frac{1}{s_2}\right) p-1\right] q=\left[\delta+\frac{m}{2}\left(\frac{1}{r_2}-\frac{\delta}{r_2}\right) p-1\right] q \\
    & =\left[\delta+\frac{m}{2} \frac{1-\delta}{r_2} p-1\right] q=\left[\delta-1+(1-\delta) \frac{m}{2} \cdot \frac{2}{m} \frac{q+1}{p q-1} p\right] q \\
    & =(1-\delta) \frac{p+1}{p q-1} q=(1-\delta) \frac{p q+q}{p q-1}
\end{aligned}
\end{equation}
Since $pq>1$ and $\delta\in(0,1)$, by \eqref{7.69}, we have
$$
(\delta+w_{1}p-1) q>0 .
$$
By \eqref{7.9}, we conclude that $\frac{q+1}{pq+q}<\delta$, and hence that $1-\delta<\frac{pq-1}{pq+q}$, and finally
$$
(1-\delta)\frac{pq+q}{pq-1}<1 \text {. }
$$
Therefore, we have 
\bes\label{7.70}
\left(\delta+w_1 p-1\right)q\in(0,1).
\ees
Obviously,
\begin{equation*}
\begin{aligned}
    0 < w_1p < 1 &\Leftrightarrow 0 < \frac{m}{2}\left(\frac{1}{r_2}-\frac{1}{s_2}\right)p < 1 \Leftrightarrow 0 < \frac{m}{2}\left(\frac{1}{r_2}-\frac{\delta}{r_2}\right)p < 1\\
    &\Leftrightarrow 0 < \frac{m}{2}\frac{1 - \delta}{r_2}p < 1 \Leftrightarrow 0 < \frac{m}{2}(1 - \delta)\frac{2}{m}\frac{q + 1}{pq - 1}p < 1\\
    &\Leftrightarrow (1 - \delta)\frac{pq + p}{pq - 1} < 1 \Leftrightarrow 0 < 1 - \delta < \frac{pq - 1}{pq + p}\\
    &\Leftrightarrow \frac{p + 1}{pq + p} < \delta < 1.
\end{aligned}
\end{equation*}
Due to  $p\leq q$, by \eqref{7.9}, we have 
$$\frac{p + 1}{pq + p} \leq \frac{q + 1}{q(p + 1)} < \delta < 1.$$
Thus we know that 
\begin{equation}\label{6.119}
0 < w_1p < 1.
\end{equation}
By \eqref{6.58}, \eqref{7.65}, \eqref{7.66}, \eqref{7.70} and \eqref{6.119}, we have
\bes
t^{w_{1}}\|v(t,\cdot)\|_{\ell^{s_2}(V)}&\le& C_4\left\|v_0\right\|_{\ell^{r_2}(V)}+2^{q-1}C_4^{q+ 1}\left\|u_0\right\|_{\ell^{r_1}(V)}^qC_{22}t^{w_{1}+\left(1-\delta-wq\right)}+2^{q-1}\times\nm\\[2mm]
&&C_{4}^{q+1}t^{w_{1}} \int_0^t(t-s)^{-\delta}\left[\int_0^s(s-\tau)^{-\delta}\tau^{-w_{1} p}\left(\tau^{w_{1}}\|v(\tau,\cdot)\|_{\ell^{s_2}(V)}\right)^pd\tau\right]^q ds\nm\\[2mm]
&\le&C_4\left\|v_0\right\|_{\ell^{r_2}(V)}+2^{q-1}C_4^{q+1}\left\|u_0\right\|_{\ell^{r_1}(V)}^qC_{22}t^{w_{1}+\left(1-\delta-w\right)q}+2^{q-1}\times\nm\\[2mm]
&&C_{4}^{q+1}t^{w_{1}}\int_0^t(t-s)^{-\delta}\left[\int_0^s(s-\tau)^{-\delta}\tau^{-w_{1}p}d\tau\right]^q ds\left(\sup_{s\in[0, t]}s^{w_{1}}\|v(s,\cdot)\|_{\ell^{s_2}(V)}\right)^{pq}\nm\\[2mm]
&=&C_4\left\|v_0\right\|_{\ell^{r_2}(V)}+2^{q-1}C_{22}C_4^{q+1}\left\|u_0\right\|_{\ell^{r_1}(V)}^q+2^{q-1}\times\nm\\[2mm]
&&C_{4}^{q+1}t^{w_1}t^{1-\delta+\left(1-\delta-w_{1}p\right)q}C_{23}\left(\sup_{s\in[0,t]} s^{w_{1}}\|v(s,\cdot)\|_{\ell^{s_2}(V)}\right)^{pq},\label{7.67}
\ees
where $C_{22}=B\left(1-\delta,1-wq\right)$, and
$$C_{23}=B\left(1-\delta, 1+\left(1-\delta-w_{1}p\right)q\right) B^q\left(1-\delta, 1-w_{1}p\right).$$
Let $h(t):=\sup\limits_{s\in[0,t]}\left( s^{w_1}\left\|{v(s,\cdot)}\right\|_{\ell^{s_2}(V)}\right) $. Then by \eqref{7.65} and \eqref{7.67},
$$
\begin{aligned}
    h(t) \le & C_4\left\|v_0\right\|_{\ell^{r_2}(V)}+2^{q-1} C_4^{q+1}\left\|u_0\right\|_{\ell^{r_1}(V)}^q C_{22} \\
    & +2^{q-1} C_4^{q+1} C_{23} h^{p q}(t) \\
    \leq & C_{24}\left(\left\|v_0\right\|_{\ell^{r_2}(V)}+\left\|u_0\right\|_{\ell^{r_1}(V)}^q+h^{p q}(t)\right),
\end{aligned}
$$
where
$$
C_{24}=\max \left\{C_4, 2^{q-1} C_{22} C_4^{q+1}, 2^{q-1} C_4^{q+1} C_{23}\right\}.
$$
By \eqref{7.58}, we see
$$
\left\|v_0\right\|_{\ell^{r_2}(V)}+\left\|u_0\right\|_{\ell^{r_1(V)}}^q \le\left(\hat{C}_2\right)^{\frac{1}{p}}+\hat{C}_2^q=:\hat{C}_3 .
$$
By similar discussions as in the proof of \eqref{7.60.} above,
we may show that
$$
h(t)<2 C_{24} \hat{C}_3 \text{ for } t\in(0,T_4]\text {. }
$$
This implies that
$$
\|v(t, \cdot)\|_{\ell^{s_2}(V)} \le 2 C_{24} \hat{C}_3 t^{-w_1} \text { for } t \in\left(0, T_4\right].
$$
It follows that \eqref{7.63} by Lemma \ref{l3.4}.

It follows from \eqref{6.66} and \eqref{7.63} that $(u,v)$ is global.

We now complete the proof.
\end{proof}

From Lemma \ref{l6.2}, we immediately get Theorem \ref{t2.5.}.

\section{Acknowledgement}
I would like to express my sincere gratitude to the anonymous reviewers for their meticulous review and constructive feedback on this manuscript. Their insightful comments and suggestions have significantly improved the quality, clarity, and scientific rigor of this work. I also appreciate the editor’s efforts in managing the review process and providing valuable guidance throughout.

\end{document}